\newcommand{\R}{\mathbb R}
\newcommand{\N}{\mathbb N}
\newcommand{\CC}{\mathcal C}
\newcommand{\EE}{\mathcal E}
\newcommand{\FF}{\mathcal F}
\newcommand{\GG}{\mathcal G}
\newcommand{\HH}{\mathcal H}
\newcommand{\KK}{\mathcal K}
\renewcommand{\SS}{\mathcal S}
\newcommand{\TT}{\mathcal T}
\newcommand{\bHH}{ \boldsymbol{\mathcal{H}} }
\newcommand{\bXX}{ \boldsymbol{\mathcal{X}} }
\theoremstyle{plain}
\newtheorem{theo}{Theorem}
\newtheorem{prop}[theo]{Proposition}
\newtheorem{lem}[theo]{Lemma}
\newtheorem{cor}[theo]{Corollary}
\theoremstyle{remark}
\newtheorem{rem}[theo]{Remark}
\newtheorem{exam}[theo]{Example}
\theoremstyle{definition}
\newtheorem{assu}[theo]{Assumption}
\numberwithin{equation}{section}
\numberwithin{theo}{section}
\def\le{\leqslant}
\def\ge{\geqslant}
\DeclareMathOperator{\id}{Id}
\def\eps{\varepsilon}
\renewcommand\d{\textnormal{d}}
\def\ini{\textnormal{in}}
\newcommand{\di}{\mathrm{i}}
\def\la{\langle}
\def\ra{\rangle}
\newsavebox{\@brx}
\newcommand{\lla}[1][]{\savebox{\@brx}{\(\m@th{#1\langle}\)}%
	\mathopen{\copy\@brx\kern-0.5\wd\@brx\usebox{\@brx}}}
\newcommand{\rra}[1][]{\savebox{\@brx}{\(\m@th{#1\rangle}\)}%
	\mathclose{\copy\@brx\kern-0.5\wd\@brx\usebox{\@brx}}}
\newcommand{\Nt}[1]{{\left\vert\kern-0.25ex\left\vert\kern-0.25ex\left\vert #1 
		\right\vert\kern-0.25ex\right\vert\kern-0.25ex\right\vert}}
\newcommand{\step}[2]{\medskip\noindent\textit{Step #1: #2.}}
\newcounter{author}
\renewcommand*\author[1]{%
	\stepcounter{author}%
	\ifnum\c@author=1
	\gdef\@author{#1}%
	\else
	\xdef\@author{\unexpanded\expandafter{\@author\and#1}}%
	\fi
	\csgdef{author@\the\c@author}{#1}}
\newcommand*\email[1]{%
	\csgdef{email@\the\c@author}{#1}}
\newcommand*\address[1]{%
	\csgdef{address@\the\c@author}{#1}}
	\xdef\author@count{\the\c@author}%
\newcommand*\print@authors{%
	\ifnum\c@author>\author@count
	\else
	\print@author{\the\c@author}%
	\advance\c@author by 1
	\expandafter\print@authors
	\fi}
\newcommand*\print@author[1]{%
	\par\medskip
	
	\noindent\begin{tabular}{@{}l@{}}%
		\textsc{\csuse{author@#1}}\\[.25em]
		\begin{minipage}{\textwidth}\csuse{address@#1}
		\end{minipage}\\[.75em]
		\textit{E-mail}:
		\href{mailto:\csuse{email@#1}}{\csuse{email@#1}}
	\end{tabular}
}
\title{Well-posedness and long-time behavior for self-consistent Vlasov-Fokker-Planck equations with general potentials}
\author{Pierre Gervais}
\address{Univ. Lille, CNRS, Inria, UMR 8524 - Laboratoire Paul Painlevé,\\ F-59000 Lille, France} 
\email{pierre.gervais@univ-lille.fr}
\author{Maxime Herda}
\address{Univ. Lille, Inria, CNRS, UMR 8524 - Laboratoire Paul Painlevé,\\ F-59000 Lille, France} 
\email{maxime.herda@inria.fr}
\begin{document}
	
	\maketitle
	
	\begin{abstract}
		
		We study the well-posedness, steady states and long time behavior of solutions to  Vlasov-Fokker-Planck equation with external confinement potential and nonlinear self-consistent interactions. Our analysis introduces newly characterized conditions on the interaction kernel that ensure the local asymptotic stability of the unique steady state.  Compared to previous works on this topic, our results allow for large, singular and non-symmetric interactions. As a corollary of our main results, we show exponential decay of solutions to the Vlasov-Poisson-Fokker-Planck equation in dimension $3$, for low regularity initial data. In the repulsive case, the result holds in strongly nonlinear regimes (\emph{i.e.} for arbitrarily small Debye length). Our techniques rely on the design of new Lyapunov functionals based on hypocoercivity and hypoellipticity theories. We use norms which include part of the interaction kernel, and carefully mix ‘‘macroscopic quantities based''--hypocoercivity with ‘‘commutators based''--hypocoercivity.

		\medskip
		
		\noindent\textbf{Mathematics Subject Classification (2020):} 35Q83, 35Q84, 35B35, 35B65, 82C40.
		
		\medskip
		
		\noindent\textbf{Keywords:} Vlasov-Fokker-Planck; Hypocoercivity; Hypoellipticity; Convolution operator; Coulomb interactions.
	\end{abstract}
	
	\tableofcontents
	
	\section{Introduction}
	We are interested in the study of Vlasov-Fokker-Planck (VFP) equation in the presence of an external confining potential and self-consistent interactions. This equation describes the time evolution of the  distribution function $F$ in the phase space $\R^d\times\R^d$ with $d\geq1$, through the kinetic equation
	\begin{equation}\label{eq:SCVFP}
		\begin{cases}
			\displaystyle \partial_t F + v \cdot \nabla_x F - \nabla_x \left( \Psi_F + V \right) \cdot \nabla_v F = \nu \nabla_v \cdot \left( v F + \nabla_v F \right),
			\\
			\displaystyle \Psi_F (t, x) = \int_{\R^{2d} } k (x - y) F(t, y, v)\, \d y\, \d v, \\
			\displaystyle F|_{t=0} = F_\ini.
		\end{cases}
	\end{equation}
	In this equation the Vlasov transport operator $v \cdot \nabla_x -\nabla_x \left( \Psi_F + V \right) \cdot \nabla_v$ accounts for the movement of particles in the phase space under the action of two forces. The first force is driven by an external potential $V$ with confining properties. The second force is self-consistent and derives from the potential $\Psi_F$ which is given by the convolution between a long-range interaction kernel $k$ and the macroscopic density. This makes the model \eqref{eq:SCVFP} nonlinear and nonlocal. The Fokker-Planck operator $\nu\nabla_v \cdot ( v\ \cdot\ ) + \nu\Delta_v$, with $\nu>0$, accounts for short range interactions with,  typically, a fixed background of particles at constant temperature. In appropriate contexts, it can also be viewed as a toy model for nonlinear collisional operators such as the Dougherty-Fokker-Planck \cite{dougherty1964model} or Landau operator \cite{villani2002review}. The Vlasov-Fokker-Planck equation can also be interpreted as the Kolmogorov-Fokker-Planck equation for the law of a stochastic process following the underdamped Langevin equations, which describes the trajectories of individual particles.

	The self-consistent Vlasov-Fokker-Planck equation appears in a variety of physical contexts ranging from astrophysics \cite{chandrasekhar_1943_stochastic},  plasmas \cite{bouchut_1995_long, herda_2018_large}, relativistic beams \cite{cesbron2023vlasov} or chemical solutions \cite{wu2015diffusion}. It even has applications in machine learning \cite{cheng2018underdamped}, through its relation with the underdamped Langevin process. Depending on the physics at hand the interaction potential may take various forms. A widely studied model is the Vlasov-Poisson-Fokker-Planck (VPFP) equation for which the kernel $k$ is given by the Coulomb kernel 
	\begin{equation}\label{eq:kCoulomb}
		k_C(x) = \frac{I}{|x|^{d-2}},
	\end{equation} 
	with $d=3$. The potential $k_C$ describes repulsive electrostatic interactions with  $I>0$ scaling as the squared inverse Debye length. 
	In the case of attractive gravitational interactions, the Newton potential 
	\begin{equation}\label{eq:kNewton}
		k_N(x) = -\frac{I}{|x|^{d-2}},
	\end{equation} 
	with $d=3$, is used. The Coulomb and Newton interaction kernels are radially symmetric and in particular even functions of their arguments, which is natural for particle systems arising from classical mechanics. However, for other applications, asymmetric interaction kernels can arise.

	In particle accelerator physics, the  VFP equation \eqref{eq:SCVFP} is used to describe the longitudinal dynamics of relativistic beams of charged particles. In this context, there are several  models for describing self-consistent interactions, taking into account synchrotron radiation effects. In the simplest case of a relativistic particle in free space on a circular orbit the interaction potential (related to the wakefield in this context) is given by 
	\begin{equation}\label{eq:kSynchrotron}
		k_S(x) = 2\frac{\cosh\left(\frac53\sinh^{-1}(x)\right)-\cosh\left(\sinh^{-1}(x)\right)}{\sinh\left(2\sinh^{-1}(x)\right)}\mathds{1}_{x>0}(x),
	\end{equation} 
	with $d=1$. We refer to \cite{cesbron2023vlasov} and references therein for further details on this model. Observe that unlike $k_C$ and $k_N$, the potential $k_S$ is not an even function of its argument (see \cite[Figure 2]{cesbron2023vlasov} for a graphical representation).
	
	\paragraph{Steady state} The steady states solutions of the Vlasov-Fokker-Planck equation are of the form (see \cite{dressler, dolbeault_1991_stationary, bouchut_1995_long} or Section~\ref{scn:steady_state})
	$$F_\star(x, v) = \rho_\star(x) M(v), \quad \rho_\star(x) = e^{-V_\star(x)},$$
	where the local Maxwellian distribution is given by 
	$$M(v) = \frac{e^{-\frac{|v|^2}{2}}}{(2\pi)^{\frac d2}},$$
	and the Gibbs potential $V_\star$ is such that
	$$
	\quad \nabla_x V_\star = \nabla_x \left( V + k\ast\rho_\star \right),	
	$$
	where $\ast$ denotes the convolution. Observe that $k\ast\rho_\star = \Psi_{F_\star}$. The macroscopic part of the steady state can be characterized equivalently, under the normalization condition $ \int \rho_\star = 1$, as the fixed point of some nonlinear integral mapping
	$$\rho_\star = \frac{ e^{-V - k\ast\rho_\star} }{ \int_{ \R^d } e^{-V - k\ast\rho_\star } \d x }.$$
	This steady state equation coincides with the one of the McKean-Vlasov equation
	$$\partial_t \rho = \nabla_x \cdot \left( \nabla_x \rho + \rho \nabla_x k\ast\rho + \rho \nabla_x V \right).$$
	This equation can be seen as the macroscopic version of \eqref{eq:SCVFP} and can be, at least formally, related to  \eqref{eq:SCVFP} in the diffusion limit $\nu\to\infty$ on the appropriate time scale. We refer to the recent  \cite{blaustein_2023_diffusive} and references therein for more details on diffusion limits, at least in the case of Coulomb interactions. We point out that the existence and stability of steady states for the McKean-Vlasov equation has been considered for instance in \cite{tamura, chazelle, carrillo_2020_long}, as well as the question of bifurcations and phase transitions.

	\paragraph{Free energy}
	In some cases there is a macroscopic quantity of interest (aside of the total mass) for the Vlasov-Fokker-Planck equation \eqref{eq:SCVFP}. It is usually refered to as free energy functional and writes
	\begin{equation}
		\label{eq:freeenerg}
		\begin{split}
			\EE[ F ] = & \int_{ \R^{2d} } F(x,v) \log F(x,v) \, \d x \d v \\
			& + \int_{ \R^{2d} } F(x, v) \left( \frac{|v|^2}{2} + V(x) + \frac{ \Psi_F(x) }{2} \right) \, \d x \d v, 
		\end{split}
	\end{equation}
	where the first part represents the entropy of the system, and the second one represents the sum of its kinetic and potential energy. One may check (see for instance \cite{cesbron2023vlasov}) that this functional satisfies the differential identity
	\begin{multline*}\frac{\d \EE}{\d t} + \nu \int_{ \R^{2d} } F(x, v) \left| \nabla_v \log\left( \frac{F(x,v)}{M(v)} \right) \right|^2 \d x \d v\\ = - \int_{ \R^{2d} } \nabla_x \Psi^o_F(x) v F(x, v) \d x\d v \, ,
	\end{multline*}
	where $\Psi_F^o$ is the part of the total interaction potential induced by the odd part of $k$, namely $k^\text{o}(x) = (k(x)-k(-x))/2$ (see Section \ref{scn:interaction}).	In particular, when $k$ is even, the right-hand side of the previous identity vanishes, thus the free energy is a Lyapunov functional. In this case the steady states of VFP described previously coincide with critical points of $\EE$ under the constraint of given total mass only if $k$ is even. When $k$ yields enough coercivity and convexity to the functional $\EE$, it becomes clear that the steady state is actually unique and asymptotically stable. This is the case for instance for Coulomb interactions, for which \eqref{eq:freeenerg} allows to show non-quantitative asymptotic stability of the dynamics, see \cite{bouchut_1995_long}. 	
	
	However, for asymmetric $k$, the functional $\EE$ does not yield as much information anymore. We shall see in Section~\ref{scn:steady_state} that it may still be used to investigate the uniqueness of steady states. For more details on Lyapunov functionals for \eqref{eq:SCVFP} with general asymmetric kernels we refer to \cite{cesbron2023vlasov} and \cite{monmarche2023note}.
	
	\paragraph{State of the art and contributions}
	
	In the present paper, we study the well-posedness of self-consistent VFP equations \eqref{eq:SCVFP} around steady states, which we prove to exist and be unique under appropriate conditions, and derive quantitative estimates of decay and regularization of solutions.
	
	Early works on the well-posedness of VFP include \cite{neunzert_1984_VFP, degond_1986_global, victory_1990_classical, victory_1991_existence, bouchut_1993_existence}. Concerning the long-time behavior, in linear settings ($k=0$), quantitative convergence estimates can be established thanks to hypocoercivity methods \cite{herau_2004_isotropic, villani_2009_hypocoercivity, DMS, bouin_2020_hypocoercivity}. In the nonlinear case, with smooth interaction kernels ($k\in W^{2,\infty}$), hypocoercivity methods have been complemented with techniques involving entropy / free energy functionals \cite[Section A.21]{villani_2009_hypocoercivity}, low regularity norms and probabilistic techniques \cite{bolley_2010_trend, guillin_2022_convergence, bayraktar_2024_exponential}. 
	
	The case of singular kernels and more precisely the Coulomb interactions, and correspondingly the Vlasov-Poisson-Fokker-Planck equation has been studied a lot. Notably, \cite{herau_2016_global} provided the first quantitative decay estimates for VPFP with confining potential in dimension $3$, though their results were restricted to weakly nonlinear regimes (for $k$ sufficiently small). Subsequent work \cite{herda_2018_large} continued in this direction. Recently, \cite{ADLT} demonstrated that appropriate norms could extend these results beyond weak interaction regimes for the linearized system, leaving the nonlinear case as a future research direction. Improvement were obtained recently in this direction by \cite{arnold_tosh_VFP}. Asymmetric kernels have only recently been explored in \cite{cesbron2023vlasov} and \cite{monmarche2023note}, with results limited to one dimension and requiring at least Lipschitz regularity for $k$.
	
	In light of the existing literature, we have identified several gaps that this manuscript  starts to address. First, we enhance state-of-the-art results by deriving quantitative decay estimates for VPFP in strongly nonlinear regimes. This goes beyond the restrictions of \cite{herau_2016_global, herda_2018_large} and addresses part of the open problems posed in \cite{ADLT}. We also improve the regularity requirements on the initial data, compared to the recent \cite{arnold_tosh_VFP}. Furthermore, our analysis goes beyond VPFP and includes a large class of VFP models. Indeed we place particular emphasis on considering general interaction potentials $k$, which may be non-symmetric, singular, or exhibit repulsive and/or mildly attractive behavior. In this regard we establish sufficient criteria for the uniqueness of steady states and their asymptotic stability, noting that such stability is not always guaranteed.
	
	In the following section, we precisely define our setting and present our main results.

	\section{Setting and main result}

	In this section, we present the main results of this paper. First we introduce some notation	and the general assumptions, under which our main results hold.

	\subsection{Notations}
	
	In the following, for a complex number $z$, the real and imaginary parts are denoted by $\Re z$ and $\Im z$. The notations $x_+ = \max(0,x)$ and $x_-=\min(0,-x)$ denote the non-negative and non-positive parts respectively. Given a multi-index $\alpha=(\alpha_1,\dots,\alpha_d)\in\N^d$, we write $\partial^\alpha$ for the partial derivative $\partial^{\alpha_1}\dots\partial^{\alpha_d}$ and $|\alpha| = \sum_{i}\alpha_i$. Given a function (or a distribution) $k:\R^d\to \R$, we write $\widecheck{k}:x\mapsto  k(-x)$ and we denote by $\FF(k)(\xi) = \widehat{k}(\xi) = \int_{\R^d}e^{-i x\cdot\xi}k(x)\d x$ the Fourier transform of $k$. The symbol $\ast$ denotes the convolution between a distribution and a function, namely
	$
	k\ast\rho(x) = \int_{\R^{d}} k(x-y) \rho(t, y)\, \d y
	$.  
	
	The notation $\la\cdot,\cdot\ra$ denotes a duality bracket or a scalar product. In the latter case, a subscript may be added to precise the Hilbert space. We write $[A,B] =AB-BA$ to denote the commutator between two operators. Given $p\in[1,\infty]$, we denote by $p'=(1-p^{-1})^{-1}$ the conjugate Lebesgue exponent. For weighted Lebesgue space, we denote the norms
	$$ \| f \|_{L^p(m)}^p = \int_{\Omega} | f (\omega) |^p m(\omega) \d \omega \, , \quad 1 \le p <\infty$$
	$$ \| f \|_{L^\infty(m)} = \sup_{\omega \in \R^d} | f (\omega)  m(\omega) | \, .$$
	For weighted Sobolev spaces, when $s \in \N$, we denote the squared norm
	$$\| f \|_{H^s(m)}^2 = \sum_{ | \alpha | \le s  } \| \partial^\alpha f \|_{L^2(m)}^2$$
	and when $s \in (0, \infty) \backslash \N$, we define $H^s(m)$ by interpolation (through the real or complex method unambiguously, see \cite[Theorems 3.3 and 3.5]{chandler}). 
	
	In inequalities $C$ will denote a constant which may change form a line to another and we sometimes use $\lesssim$ instead of $\leq C$. When it is necessary, we shall be more presice and write $C(a,b,c,\dots)$ to denote a constant depending on the parameters $a,b,c,\dots$. 
	
	\subsection{Confining potential}
	Let us first describe our general assumptions on the confinement potential.	
	\begin{assu}
		\label{assu:confinement}
		The confining potential $V:\R^d\to\R$ is a $C^2(\R^d)$ function. It is assumed to satisfy that for any $\eps > 0$ there is some constant $C_\eps > 0$, such that
		\begin{equation}
			\label{eq:smoothness_V}
			\forall x \in \R^d, \quad | \nabla^2 V(x) | \le \eps | \nabla V(x) | + C_\eps.
		\end{equation}
		Moreover the following integrability and boundedness conditions hold
		\begin{equation}
			\label{eq:integrability_V}
			\left( 1 + | \nabla V |^2 \right) e^{-V} \in L^1 \cap L^\infty  \quad \text{and} \quad \int_{\R^d} e^{-V(x) } \d x = 1 \,.
		\end{equation}
		Finally, we assume that the measure $e^{-V}$ admits a Poincaré inequality
		\begin{equation}
			\label{eq:Poincare_V}\int_{\R^d} | u|^2 e^{- V}\d x - \left(\int_{\R^d} u e^{- V} \d x\right)^{2} \le C_P \int_{\R^d} | \nabla_x u |^2 e^{- V} \d x \, ,
		\end{equation}
		for any $u:\R^d\to\R$ such that the integrals are finite and some constant $C_P>0$.
	\end{assu}
	
	The Poincaré inequality \eqref{eq:Poincare_V} is equivalent to the existence of a spectral gap for the Witten Laplacian associated to $V$, namely $- \Delta_V = -\Delta + \frac{1}{4} | \nabla V|^2 - \frac{1}{2} \Delta V$.
	A sufficient condition for $-\Delta_V$ to admit a spectral gap is given by Persson's Theorem \cite[Theorem 2.1]{persson} taking $a(x) = \frac{1}{4} | \nabla V(x) |^2 - \frac{1}{2} \Delta V(x)$, which requires that
	$$\inf_{|x| \ge R} a(x) > 0 \quad \text{for $R$ large enough} \, .$$
	In virtue of \eqref{eq:smoothness_V}, the Poincaré assumption \eqref{eq:Poincare_V} could then be replaced by the stronger one
	$$| \nabla V(x) | \to \infty \quad \text{when} \quad |x| \to \infty\, .$$
	Some simple families of such potential are given for any $\alpha \in ( 0 , \infty)$
	$$V(x) = \la x  \ra^{1+\alpha} \quad \text{or} \quad V(x) = \la x \ra \log(1+|x|^2)^\alpha \, ,$$
	up to some normalizing additive constant. We refer to \cite{bbcg, villani_2009_hypocoercivity} for improvements and more criteria leading to a Poincaré inequality.
	
	In the following any norm involving only the confinement potential will be denoted by $R_V$.

	\subsection{Interaction potential}
	
	\label{scn:interaction}
	Let us now discuss and state our hypotheses for the interaction potential $k$. We will rather use the corresponding  convolution operator $\KK$ (in the sense of \cite[Definition 2.5.1]{grafakos}) which we assume to be bounded between some Lebesgue spaces. In turn, this yields the existence and uniqueness  \cite[Theorem 2.5.2]{grafakos} of a unique tempered distribution $k$ realizing the convolution. We have
	\begin{equation}\label{eq:convol_op_and_adjoint}
		\KK\rho = k \ast \rho\quad\text{and}\quad \KK^*\rho = \widecheck{k} \ast \rho,
	\end{equation}
	for any Schwartz function $\rho:\R^d\to\R$, where we denoted by $\KK^*$ the formal adjoint of $\KK$.  From there we define the symmetric and skew-symmetric parts of $\KK$, 
	\begin{equation}\label{eq:sym_op_skewsym_op}
		\KK^\text{e}\rho = k^\text{e}\ast \rho\quad\text{and}\quad	\KK^\text{o}\rho = k^\text{o}\ast \rho,
	\end{equation}
	where the superscripts $\text{e}$ and $\text{o}$ refer to the fact that $k^\text{e}$ and $k^\text{o}$ are respectively the even and odd parts of $k$, namely
	\[
	k^\text{e} = \frac{k+\widecheck{k}}{2}\quad\text{and}\quad k^\text{o} = \frac{k-\widecheck{k}}{2}.
	\]	
	Observe that the corresponding Fourier multipliers are related by $\widehat{k^\text{e}} = \Re{\widehat{k}}$ and $\widehat{k^\text{o}} = \di\,\Im{\widehat{k}}$. With obvious notation, we use the superscripts $\alpha = \text{e}, \text{o}$ to denote the part of the potential $\Psi^\alpha_F$ related to the even or odd part of $k$. Before stating our assumptions on $\KK$, let us just mention that by \cite[Lemma 2.5.3]{grafakos}, $\partial^\beta\KK = \KK \partial^\beta$ for any multi-index $\alpha\in\N^d$.

	\begin{assu}
		\label{assu:interaction}
		We assume that for some $p, q \in [2, \infty]$ with $q > d$, the following continuity estimates hold:
		\begin{equation}\label{eq:bounds_k}
			\| \KK^\alpha \rho \|_{L^p } + \| \nabla \KK^\alpha \rho \|_{L^q} \le \overline{\kappa}^\alpha \| \rho \|_{L^1 \cap L^2}, \qquad \alpha = e, o \, ,
		\end{equation}
		and we give ourselves, to clarify the dependencies of constants, some arbitrary $\overline{\kappa}_{ \text{max} }>0$ such that
		$$\overline{\kappa} := \overline{\kappa}^\text{e}+ \overline{\kappa}^\text{o}\le \overline{\kappa}_{ \text{max} } \, .$$
		Moreover we assume that there are $\theta\in[0,1]$ and $\underline{\kappa}^\text{e}>0$ such that 
		\begin{equation}
			\label{eq:lower_k_sym}
			\langle \KK^\text{e}h , h \rangle \ge - \underline{\kappa}^\text{e}\left(\theta\| h \|_{L^2}^2+(1-\theta)\| h \|_{L^1}^2\right),\text{ for all } h \in L^1 \cap L^2\text{ s.t. }\int h = 0.
		\end{equation}
		Finally we assume the monotonicity property 
		\begin{equation}
			\label{eq:positivity_k}\rho \ge 0 \Rightarrow \KK \rho \ge 0,\text{ for all }\rho \in L^1 \cap L^2.
		\end{equation}
	\end{assu}
	\begin{rem}\label{rem:coerciv}
		In the bilinear estimate \eqref{eq:lower_k_sym}, $\langle \KK^\text{e}h , h \rangle$ is well-defined as the $L^{p} - L^{p'}$ duality bracket thanks to \eqref{eq:bounds_k}. Observe that it involves only the symmetric part of the kernel since $\langle \KK^\text{e}h , h \rangle = \langle \KK h , h \rangle$. Moreover a sufficient condition for  \eqref{eq:lower_k_sym} to hold is that the negative part of the Fourier transform has some integrability. Indeed one can show that for any $\theta\in[0,1]$,
		\[
		\langle \KK^\text{e}h , h \rangle \ge -C_\theta\|\widehat{k}^\text{e}_-\|_{L^{\frac{1}{1-\theta}}}\|h\|^{2\theta}_{L^2}\ \|h\|^{2(1-\theta)}_{L^1}.
		\]
		with $C_\theta= \theta^{-\theta}(1-\theta)^{-(1-\theta)}$. This yields \eqref{eq:lower_k_sym} with $\underline{\kappa}^\text{e}=C_\theta\|\widehat{k}^\text{e}_-\|_{L^{\frac{1}{1-\theta}}}$. This implies also that if $\widehat{k}^\text{e}_-$ is merely $L^1+L^\infty$ then \eqref{eq:lower_k_sym} holds with $\underline{\kappa}^\text{e}=2\|\widehat{k}^\text{e}_-\|_{L^1+L^\infty}$ and $\theta=1/2$.
	\end{rem}
	
	\begin{rem}\label{rem:positivity}
		The assumption \eqref{eq:positivity_k} could be replaced by 
		\begin{equation*}
			\forall \rho \in L^1 \cap L^2, \quad \rho \ge 0 \Rightarrow \KK \rho \ge -C\|\rho\|_{L^1}.
		\end{equation*}
		Indeed if the kernel $k$ has a negative part in $L^\infty$, then, without loss of generality, it can be translated by a constant, which is $C$ here, to be made positive without changing \eqref{eq:SCVFP}.
	\end{rem}
	
	Let us now give some examples of kernels satisfying, or not, our assumptions.

	\begin{exam}[Lipschitz kernels]
		Given a (possibly non-symmetric) $k\in W^{1,\infty}$, such as \eqref{eq:kSynchrotron}, without loss of generality (see Remark~\ref{rem:positivity}), we can always assume that $k\geq0$. Then, as consequences of Young and Holder inequalities the hypotheses are satisfied with $p=q=\infty$, and $\underline{\kappa}^\text{e}= \|k^\text{e}\|_{L^\infty}$ and $\overline{\kappa}^\alpha= \| k^\alpha\|_{W^{1,\infty}}$ for $\alpha= \text{e}, \text{o}$.
	\end{exam}

	\begin{exam}[Repulsive Riesz and Coulomb kernel]\label{exam:Riesz_repuls}
		In dimension $d$, let us consider the symmetric repulsive Riesz potential
		\[
		k(x) = \frac{I}{|x|^{d-\alpha}}\,,\quad I\geq0 \, .
		\]
		Then, as a direct consequence of Hardy-Littlewood-Sobolev inequalities, the hypothesis \eqref{eq:bounds_k} is satisfied under the conditions 
		\[
		\frac d2<\alpha \le d\quad\text{and}\quad d\geq2\,,
		\]
		so that $q>d$. This includes the Coulomb kernel ($\alpha=2$)  in dimension $3$. Besides, \eqref{eq:lower_k_sym} and \eqref{eq:positivity_k} follow from the non-negativity of $k$ and $\widehat{k}$. In particular \[\underline{\kappa}^\text{e} = \overline{\kappa}^\text{o} = 0\,\quad\text{and}\quad \overline{\kappa}^\text{e} \underset{I\to0}{=} O(I).\]  Observe that a smallness condition on $I$ would translate into a condition on $\overline{\kappa}^\text{e}$ only. As we shall see there will be no such condition for our results to hold, unlike previous works in the literature.
	\end{exam}
	
	\begin{exam}[Singular attractive kernels]
		\label{exam:singular_attractive}
		Singular attractive kernels such as 
		\[
		k(x) = -\frac{I}{|x|^{d-\alpha}}\,,\quad I\geq0,
		\]	
		fail to satisfy hypothesis \eqref{eq:positivity_k}, even if they are translated by a constant (see Remark~\ref{rem:positivity}). This hypothesis is important in our treatment of the steady VFP equation but we discuss an alternative in Section~\ref{sec:particular}. Otherwise, in the light of Remark~\ref{rem:coerciv} and Example \ref{exam:Riesz_repuls}, it is clear that the rest of the assumptions hold with 
		\[\overline{\kappa}^\text{o} = 0,\quad \underline{\kappa}^\text{e} \underset{I\to0}{=} O(I), \quad\text{and}\quad \overline{\kappa}^\text{e} \underset{I\to0}{=} O(I).\]

	\end{exam}

	\begin{exam}[More singular kernels]
		When $d \ge 2$, if $1 \le \alpha \le \frac{d}{2}$ (which includes Manev potentials $\alpha=1$, $d=3$, see \cite{choi_2023_global, bdiv}) using Hardy-Littlewood-Sobolev inequalities or Calderon-Zygmund theory only provide $q \in [2, d]$. This situation could be treated with our approach by requiring more regularity on the initial condition, but we decided to restrict this work to $H^1$--type regularity at most. We refer to Section \ref{scn:perspectives} for more on this perspective. When $\alpha < 1$, the potential $\nabla \KK$ maps $L^2$ to some Sobolev space which is less regular, as can be seen by the identification $\KK = c_\alpha (-\Delta)^{- \alpha / 2}$.
	\end{exam}
	
	
	\begin{exam}[Other kernels in weak Lebesgue spaces]
		The boundedness assumptions on $\KK$ and $\nabla \KK$ are satisfied whenever $k$ and $\nabla k$ lie in suitable weak Lebesgue spaces  (see \cite{grafakos} for the definition). Namely, if $k \in L^{s, \infty}$ for some $s \in (1, \infty)$ and $\nabla k \in L^{t, \infty}$ for some $t \in \left( \frac{2d}{2+d}, \infty \right)$,  Young's generalized convolution inequality \cite[Theorem 1.4.25]{grafakos} yield $q = \frac{2 t}{2-t} > d$ with
		$$\overline{\kappa}^o = C \left( \| k^o \|_{L^{s, \infty} } + \| \nabla k^o \|_{L^{t, \infty} } \right) \quad \text{and} \quad \overline{\kappa}^e = C \left( \| k^e \|_{L^{s, \infty} } + \| \nabla k^e \|_{L^{t, \infty} } \right) \, ,$$
		for some universal constant $C = C(s, t)$ related to Young's inequality.
	\end{exam}
	

	\subsection{Main results}
	
	Let us first state our main result concerning steady states of \eqref{eq:SCVFP}. The following theorem is, for the sake of presentation, a restricted version of a more general result, Theorem~\ref{thm:steady} that we prove in Section~\ref{scn:steady_state}.
	\begin{theo}
		\label{thm:steady_state}
		Under Assumptions \ref{assu:confinement} and \ref{assu:interaction} there is $\delta^\text{e}>0$ such that if \[\underline{\kappa}^\text{e}< \delta^\text{e}(\theta, \overline{\kappa}_{ \text{max} }, R_V),\] then the equation \eqref{eq:SCVFP} admits a unique steady state with mass $1$. It is of the form
		\[
		F_\star(x,v) = e^{-V_\star(x)} M(v),
		\]
		with $V_\star-V = \Psi_{F_\star}\in W^{2,\infty}$. The probability measure with density $F_\star$ satisfies the Poincaré and weighted Poincaré inequalities stated in Section~\ref{scn:functional_inequalities_steady_state} for some constant $C_\star$ depending only on $\left\| \left( 1+|\nabla V|^2 \right)e^{-V} \right\|_{L^1 \cap L^\infty}$, the Poincaré constant of $e^{-V} \d x $, as well as $\overline{\kappa}_{ \text{max} }$ and $\theta$.
	\end{theo}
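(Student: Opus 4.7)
The plan is to reduce the statement to a fixed-point equation for the spatial marginal and then to obtain the regularity and the functional inequalities as corollaries. Any steady solution of \eqref{eq:SCVFP} must be of the form $F_\star = \rho_\star M$ (the Fokker-Planck operator selects the Maxwellian once the force is purely spatial), so it suffices to find a probability density solving
\begin{equation*}
\rho_\star = \Phi(\rho_\star),\qquad \Phi(\rho) := \frac{e^{-V-\KK\rho}}{Z(\rho)},\qquad Z(\rho) := \int_{\R^d} e^{-V-\KK\rho}\,\d x.
\end{equation*}
For existence I would apply Schauder's theorem on the closed convex subset $\CC_R = \{\rho\in L^1\cap L^2 : \rho\geq 0,\ \int\rho=1,\ \|\rho\|_{L^1\cap L^2}\leq R\}$. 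By \eqref{eq:positivity_k}, $\KK\rho\geq 0$ on $\CC_R$, and combining the $L^p$ and $L^q$ bounds of \eqref{eq:bounds_k} with Morrey's inequality (valid since $q>d$) yields a uniform bound $\|\KK\rho\|_{L^\infty}\leq C(\overline{\kappa}_{\text{max}},R)$. The sandwich $e^{-\|\KK\rho\|_\infty}e^{-V}\leq e^{-V-\KK\rho}\leq e^{-V}$ then controls $Z(\rho)$ above and below and gives $\Phi(\rho)\leq C(R_V,\overline{\kappa}_{\text{max}})\,e^{-V}$, so that \eqref{eq:integrability_V} ensures $\Phi(\CC_R)\subset\CC_R$ for some $R$ depending only on $R_V$ and $\overline{\kappa}_{\text{max}}$. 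Continuity of $\Phi$ in $L^1\cap L^2$ is routine, and compactness follows from the bound on $\nabla\Phi(\rho)$ obtained from \eqref{eq:bounds_k} and \eqref{eq:smoothness_V}.

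For uniqueness, let $\rho_1,\rho_2$ be two fixed points and set $h = \rho_1-\rho_2$, which has zero mean. Taking the logarithm of the fixed-point identity gives $\log\rho_1-\log\rho_2 = -\KK h - \log(Z_1/Z_2)$, and pairing with $h$ makes the constant drop out, yielding
\begin{equation*}
\int h(\log\rho_1-\log\rho_2)\,\d x = -\langle h,\KK h\rangle = -\langle h,\KK^{\text{e}}h\rangle.
\end{equation*}
The elementary inequality $(a-b)(\log a-\log b)\geq (a-b)^2/\max(a,b)$, combined with the pointwise upper bound $\rho_i\leq C e^{-V}$ derived in the existence step, bounds the left-hand side from below by $C^{-1}\int h^2 e^V\,\d x$. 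The right-hand side is controlled from above by \eqref{eq:lower_k_sym}, while two Cauchy-Schwarz estimates against the measure $e^{-V}\,\d x$ yield $\|h\|_{L^2}^2\leq \|e^{-V}\|_{L^\infty}\int h^2 e^V\,\d x$ and $\|h\|_{L^1}^2\leq \|e^{-V}\|_{L^1}\int h^2 e^V\,\d x$. Assembling these gives
\begin{equation*}
\frac{1}{C}\int h^2 e^V\,\d x \leq \underline{\kappa}^{\text{e}}\bigl(\theta\|e^{-V}\|_{L^\infty}+(1-\theta)\|e^{-V}\|_{L^1}\bigr)\int h^2 e^V\,\d x,
\end{equation*}
which forces $h=0$ as soon as $\underline{\kappa}^{\text{e}}<\delta^{\text{e}}$, with $\delta^{\text{e}}$ depending only on $\theta$, $\overline{\kappa}_{\text{max}}$ and $R_V$.

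The regularity $\Psi_{F_\star} = \KK\rho_\star\in W^{2,\infty}$ is obtained by bootstrapping: $\KK\rho_\star$ and $\nabla\KK\rho_\star$ are already in $L^\infty$ by the Morrey bound of the existence step; differentiating the fixed-point identity and using \eqref{eq:smoothness_V} and \eqref{eq:integrability_V} gives $\nabla\rho_\star\in L^1\cap L^2$, and a second application of \eqref{eq:bounds_k} followed by Morrey yields $\nabla^2\KK\rho_\star\in L^\infty$. The Poincaré inequality for $\rho_\star = e^{-V-\Psi_{F_\star}}/Z$ then follows from the Holley-Stroock perturbation principle, with constant multiplied by $\exp(2\,\mathrm{osc}\,\Psi_{F_\star})$, and the weighted Poincaré inequality of Section~\ref{scn:functional_inequalities_steady_state} is obtained by the same perturbation argument. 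The main obstacle is the uniqueness step: absence of symmetry of $k$ forbids any free-energy convexity argument, so the only lever against the sign-indefiniteness of $\KK^{\text{e}}$ is the precise interpolation between $L^1$, $L^2$ and the weighted $L^2(e^V)$ norms tuned by $\theta$; in addition, ensuring that $\delta^{\text{e}}$ depends only on $\theta$, $\overline{\kappa}_{\text{max}}$ and $R_V$, rather than separately on $\overline{\kappa}^{\text{e}}$ or $\overline{\kappa}^{\text{o}}$, requires careful tracking of constants throughout.
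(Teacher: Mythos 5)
Your proposal follows essentially the same route as the paper: Schauder for the fixed-point map $\rho\mapsto e^{-V-\KK\rho}/Z(\rho)$, uniqueness via the observation that pairing the difference of the log-fixed-point identities with $h=\rho_1-\rho_2$ kills the odd part of $\KK$ and reduces matters to beating $\underline{\kappa}^{\text{e}}$ with the logarithmic convexity term (your weighted bound $\int h^2 e^{V}$ combined with Cauchy--Schwarz is an equivalent packaging of the paper's Csisz\'ar--Kullback--Pinsker plus $(x-y)\log(x/y)=(x-y)^2\int_0^1 \frac{\d t}{ty+(1-t)x}$ argument, carried out there through the free energy $\FF$), then a bootstrap for regularity and Holley--Stroock for the Poincar\'e inequalities.

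One step is not right as written: a ``second application of \eqref{eq:bounds_k} followed by Morrey'' applied to $\nabla\rho_\star\in L^1\cap L^2$ gives $\nabla\KK(\nabla\rho_\star)=\nabla^2\KK\rho_\star\in L^q$, and Morrey upgraded to $L^\infty$ would then require the \emph{third} derivative $\nabla\KK(\nabla^2\rho_\star)\in L^q$, hence a bound on $\nabla^2\rho_\star$ in $L^1\cap L^2$. That in turn forces a second differentiation of the fixed-point identity, where $|\nabla^2 V|\rho_\star$ and $|\nabla V|^2\rho_\star$ must be controlled via \eqref{eq:smoothness_V} and \eqref{eq:integrability_V}; this is precisely what the paper's Fa\`a di Bruno induction (Lemma \ref{lem:regularity_steady_state}) systematizes. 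The ingredients you cite do close this loop, but the extra rung of the bootstrap is needed and should be stated.
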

	
	\begin{rem}
		Uniqueness is expected when $\KK$ is symmetric and positive because  the steady state is characterized by minimization of a strictly convex functional. When $\KK$ is non-symmetric and weakly non-positive, a new geometric argument on the free energy is used to prove the uniqueness of steady states (see Section~\ref{scn:uniqueness}). 
	\end{rem}
	
	Our second main result concerns the well-posedness of \eqref{eq:SCVFP}, as well as the decay and regularity of solutions around steady states.	
	
	\begin{theo}
		\label{thm:stability}
		Under Assumptions \ref{assu:confinement} and \ref{assu:interaction}, there are constants $\delta^\text{e}>0$ and $\delta^\text{o}>0$ such that if
		$$\underline{\kappa}^\text{e} < \delta^\text{e}(\theta,\overline{\kappa}_{ \text{max} }, R_V) \qquad \text{and} \qquad \overline{\kappa}^\text{o}< \delta^\text{o}(\overline{\kappa}_{ \text{max} }, R_V, \underline{\kappa}^\text{e}, \nu ),$$ the unique steady state of \eqref{eq:SCVFP} is stable in the following sense. For any $s\in[0,1]$ such that
		\[
		s> s_c := \frac{3}{2} \left( \frac{d}{q} - \frac{1}{3} \right),
		\]
		there is a constant $R>0$ such that if		
		\[\|F_\ini - F_\star\|_{H^s_xL^2_v(F_\star^{-1})}< R,\]
		then \eqref{eq:SCVFP} has a unique solution $F\in \mathcal{C}([0,\infty);H^s_xL^2_v(F_\star^{-1}))$. Moreover, there are constants $C>0$ and $\lambda$ such that for all $t>0$
		\begin{align}
			\label{eq:nonlinear_decay}
			\|F(t) - F_\star\|_{H^s_xL^2_v(F_\star^{-1})}&\leq C \|F_\ini - F_\star\|_{H^s_xL^2_v(F_\star^{-1})}e^{-\lambda t},\\
			\label{eq:nonlinear_position_regularization}
			\|F(t) - F_\star\|_{H^1_xL^2_v(F_\star^{-1})}&\leq C \|F_\ini - F_\star\|_{H^s_xL^2_v(F_\star^{-1})}\left(1+t^{-\frac{3}{2}(1-s)}\right)e^{-\lambda t},\\
			\label{eq:nonlinear_velocity_regularization}
			\|F(t) - F_\star\|_{L^2_xH^{1-s}_v(F_\star^{-1})}&\leq C \|F_\ini - F_\star\|_{H^s_xL^2_v(F_\star^{-1})}\left(1+t^{-\frac{1}{2}(1-s)}\right)e^{-\lambda t}.
		\end{align}
		Finally, $F_\ini\mapsto F$ is Lipschitz continuous from $H^s_xL^2_v(F_\star^{-1})$ to $\mathcal{C}([0,\infty);H^s_xL^2_v(F_\star^{-1}))$.
	\end{theo}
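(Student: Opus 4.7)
The plan is to linearize \eqref{eq:SCVFP} around the steady state $F_\star$ provided by Theorem~\ref{thm:steady_state}, build a hypocoercive Lyapunov functional for the linearized operator that already incorporates part of the interaction, and close the resulting estimate nonlinearly by a continuity argument. Writing $F = F_\star + f$, the perturbation satisfies an equation of the form $\partial_t f + \mathcal{L} f = \nabla_x\Psi_f \cdot \nabla_v f$, where
\begin{equation*}
\mathcal{L} f \;=\; v\cdot\nabla_x f \,-\, \nabla_x(V+\Psi_{F_\star})\cdot\nabla_v f \,-\, \nabla_x\Psi_f \cdot \nabla_v F_\star \,-\, \nu\,\nabla_v\cdot(v f + \nabla_v f).
\end{equation*}
The delicate term at the linear level is the nonlocal contribution $\nabla_x\Psi_f\cdot\nabla_v F_\star$, which is neither dissipative nor skew-symmetric for a general $\KK$ and must therefore be absorbed by suitable correctors rather than beaten by smallness.

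The heart of the proof is the construction, for each admissible $s$, of a norm $\Nt{\cdot}_s$ equivalent to the $H^s_x L^2_v(F_\star^{-1})$ norm whose time derivative along the linear flow is bounded above by a negative multiple of a dissipation $\mathcal{D}(f)$ controlling $\Nt{f}_s$ up to the kernel of $\mathcal{L}$. For $s\in\{0,1\}$ I would start from the natural $L^2(F_\star^{-1})$ energy at each order of derivatives, then add the Hérau--Villani commutator-type correctors (such as $\varepsilon\,\la \nabla_v \partial^\alpha f, \nabla_x \partial^\alpha f\ra_{L^2(F_\star^{-1})}$) to generate the missing $\nabla_x$-dissipation via $[v\cdot\nabla_x,\nabla_v]=-\nabla_x$, and \emph{crucially} add DMS-type macroscopic cross-terms coupling $\nabla_x\Pi f$ with $\nabla_x\Psi_f$, thereby embedding a piece of the interaction kernel into the Lyapunov functional itself. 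This interaction-dependent correction is the novel ingredient and is precisely what allows the non-dissipative term $\nabla_x\Psi_f\cdot\nabla_v F_\star$ to be absorbed without any smallness assumption on $\overline{\kappa}^\text{e}$. The smallness conditions on $\underline{\kappa}^\text{e}$ and $\overline{\kappa}^\text{o}$ are used only to control, respectively, the attractive (non-positive symmetric) and the non-symmetric parts of $\KK$, leveraging the Poincaré and weighted Poincaré inequalities for $F_\star$ furnished by Theorem~\ref{thm:steady_state}. Non-integer $s\in(0,1)$ are then recovered by real or complex interpolation between $s=0$ and $s=1$.

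The regularization estimates \eqref{eq:nonlinear_position_regularization}--\eqref{eq:nonlinear_velocity_regularization} follow from running time-weighted versions of the same energy method, exploiting the anisotropic hypoelliptic gain of the kinetic Fokker--Planck structure: one $v$-derivative costs $t^{-1/2}$ in the short-time regime while one $x$-derivative, produced through the above commutator identity, costs $t^{-3/2}$. Gaining the $1-s$ missing derivatives yields exactly the prefactors $t^{-(1-s)/2}$ and $t^{-3(1-s)/2}$, after which the global exponential decay is reinserted.

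The nonlinear closure reduces to bounding $\nabla_x\Psi_f\cdot\nabla_v f$ in $H^s_xL^2_v(F_\star^{-1})$. Using Assumption~\ref{assu:interaction} to pass from $f$ to $\nabla_x\Psi_f\in L^q_x$ (together with its spatial derivatives), Sobolev embedding in $x$, and the hypoelliptic control of $\nabla_v f$ by $\mathcal{D}(f)^{1/2}$, one obtains
\begin{equation*}
\bigl|\la \nabla_x\Psi_f \cdot\nabla_v f,\, f\ra_{H^s_xL^2_v(F_\star^{-1})}\bigr| \;\lesssim\; \Nt{f}_s\, \mathcal{D}(f),
\end{equation*}
precisely when $s > s_c = \tfrac32(d/q - 1/3)$, the sharp threshold at which this product estimate closes without losing derivatives. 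Inserted into the modified energy inequality, this yields a Gronwall-type differential inequality that can be closed by a standard continuity argument provided $\|f_0\|_{H^s_xL^2_v(F_\star^{-1})}$ is sufficiently small. Global existence, uniqueness and \eqref{eq:nonlinear_decay} follow; Lipschitz dependence is obtained by running the same linear estimates on the difference of two solutions, viewed as a linear equation with coefficients depending on one of them. The main obstacle I anticipate is the design of $\Nt{\cdot}_s$ itself: identifying the correct interaction-dependent correctors so that the linearized self-consistent force is absorbed without any smallness on $\overline{\kappa}^\text{e}$, which is exactly what enables reaching the strongly nonlinear regime (Coulomb potential with arbitrary Debye length).
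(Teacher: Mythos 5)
Your overall strategy coincides with the paper's: linearize around $F_\star$, run a DMS-type hypocoercivity in a twisted norm that incorporates $\KK^{\text{e}}$ (so that no smallness on $\overline{\kappa}^{\text{e}}$ is needed), use smallness of $\underline{\kappa}^{\text{e}}$ and $\overline{\kappa}^{\text{o}}$ only for the attractive and antisymmetric parts, add time weights for hypoellipticity, interpolate in $s$, and close nonlinearly. Two points, however, are genuine gaps relative to the statement as written.

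First, your $s=1$ endpoint is effectively an $H^1_{x,v}$ theory: the correctors $\la \nabla_v\partial^\alpha f,\nabla_x\partial^\alpha f\ra$ over all $|\alpha|\le 1$ require velocity derivatives of the data, and interpolating that endpoint with $L^2_{x,v}$ yields $H^s_{x,v}$, not $H^s_xL^2_v$. The theorem demands \emph{no} velocity regularity on $F_\ini$. The paper's distinctive step is to apply the $L^2$ (DMS) machinery to the twisted spatial gradient $\nabla_x^* f$ alone, producing an $H^1_xL^2_v$ linear theory; the commutator $[\nabla_x^*,T]$ then generates the term $(\nabla_x^2V_\star)\nabla_v^* f$, which is why the strengthened hypothesis \eqref{eq:smoothness_V} (with $\eps\to 0$) is needed. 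Your sketch never uses \eqref{eq:smoothness_V} in this way and therefore only reaches the weaker $H^s_{x,v}$ version (cf.\ Remark \ref{rem:hessian_grad_V_eps}).

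Second, the closure by a pointwise-in-time Gronwall inequality does not go through at regularity $s<1$. Estimating $\la \nabla_x\Psi_f\cdot\nabla_v f, f\ra_{H^s_xL^2_v}$ requires $\|\nabla_x\Psi_f\|_{L^\infty_x}$, hence control of $f$ in $H^{d/q+\delta}_x$, which strictly exceeds $H^s_x$ whenever $s\le d/q$; and the instantaneous dissipation $\mathcal D(f)$ only controls $\|\nabla_v f\|_{L^2}^2$, not this extra spatial regularity. The threshold $s_c=\tfrac32(d/q-\tfrac13)$ is not the point at which a fixed-time product estimate closes: it arises from trading the missing $\sigma=1-s$ spatial derivatives against the time weights $w_{2\sigma}$, $w_{3\sigma}$ of the hypoelliptic gain, which is only meaningful after integrating in time. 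This is why the paper works in the space-time norms $\bXX^s$, $\HH^s$ (built from the weighted hypocoercivity/hypoellipticity estimates and McCarthy-type interpolation) and closes by a Banach--Picard fixed point with the bilinear bound $\|f\nabla_x\psi_g\|_{\HH^s}\lesssim\|f\|_{\bXX^s}\|g\|_{\bXX^s}$, rather than by a differential inequality. Your argument needs to be recast in that space-time form to be valid for $s_c<s<1$.
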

	

	The current work also bring several novelties and improvements on the Cauchy theory for the nonlinear Vlasov-Fokker-Planck equation close to equilibrium.
	\begin{enumerate}
		\item Assumption~\ref{assu:interaction} on the interaction operator $\KK$ encompasses many interactions potentials which are dealt with in a unified manner. They include singular, non-symmetric potentials (see the examples of Section \ref{scn:interaction} and consequences of Theorem~\ref{thm:stability} in the next section).
		\item Concerning the regularity assumption on the initial data, no regularity is required in the $v$ variable, which can be compared with \cite{herau_2016_global, arnold_tosh_VFP}. This is made possible by performing the standard interpolation argument between the linearized theories in $L^2_x L^2_v$ and $H^1_x L^2_v$ (instead of $H^1_{x, v}$), thus allowing to assume regularity for the initial data in position without assuming any in velocity. Both theories are established by applying the so-called $L^2$--hypocoercivity method of \cite{DMS} to both $F$ and $\nabla_x F$. The price to pay is the extra assumption \eqref{eq:smoothness_V} on the confining potential $V$ (see Remark \ref{rem:hessian_grad_V_eps}). 
		\item We prove nonlinear hypoellipticity \eqref{eq:nonlinear_position_regularization} by proving linear hypoellipticity in the presence of a source term in Proposition \ref{prop:H1_linear_hypoellipticity}. During the iterative scheme used to construct the solution, this allows to use the fact that both the linear flow and the approximate solution enjoys regularization properties, thus requiring less regularity for the initial condition.
		\item Theorem~\ref{thm:stability} allows for strongly nonlinear regimes, in the sense that there is no smallness condition on $\overline{\kappa}^\text{e}$. Following the ideas of \cite{ADLT}, at the linearized level, we do not consider the even part of $\KK$, namely $\KK^\text{e}$, to be a ‘‘bad term'' which is to be treated perturbatively. As it appears in the free energy functional \eqref{eq:freeenerg} of the equation, it is in some sense part of the natural metric to study the equation. As a consequence, when this even part is positive, it does not need to be small. In the particular case of Coulomb interactions, we thus prove uniqueness and stability of the equilibria, even in the strongly nonlinear regime (see Example \ref{exam:Riesz_repuls} and Corollary \ref{cor:VPFP}).
	\end{enumerate}
	
	\begin{rem}
		Let us point out that, in \cite{arnold_tosh_VFP} which deals with Coulomb interactions, the main requirement on the confining potential $V$ is that $| \nabla V | e^{-V} \in L^r$ for some $r > d$. In the present work, the integrability assumption $| \nabla V |^2 e^{-V} \in L^r$ for some $r > \frac{d}{2}$ would have been sufficient. Note that the control on $|\nabla V|^2$ we require is used only to prove Lemmas \ref{lem:AT_bounded} and \ref{lem:Lyapunov_order_1}, which allow respectively to get rid of any smallness assumption on $\overline{\kappa}^e$ (as introduced in \cite{ADLT}) and any regularity in the velocity variable for the initial data.
	\end{rem}
	
	\begin{rem}
		\label{rem:hessian_grad_V_eps}
		The assumption \eqref{eq:smoothness_V} with $\eps = 1$ is standard in the context of hypocoercivity with confinement, but the version used here (for $\eps \to 0$) can be found in \cite{carrapatoso2021special}, as well in works of semi-classical analysis (in the stronger form $\nabla^2 V = o( \nabla V)$, see for instance \cite{herau_sc_08, herau_sc_08b, herau_sc_11}). This assumption is used for $\eps$ small only in the proof of Lemma \ref{lem:Lyapunov_order_1}, which allows to avoid assuming regularity in the velocity variable for the initial condition. Without this assumption, as in \cite{arnold_tosh_VFP}, a similar result can be proved for initial data in $H^s_{x,v}$ (instead of $L^2_vH^s_{x}$).
	\end{rem}
	
	\begin{rem}
		\label{rem:negative_critical_smoothness}
		When $s_c < 0$, that is to say when $q > 3d$, the standard $H^1_{x,v}$ hypoellipticity strategy is enough, in particular the mild growth assumption \eqref{eq:smoothness_V} is unnecessary.
	\end{rem}
	
	\subsection{Particular cases and extensions}\label{sec:particular}
	
	
	The explicit and quantitative stability of equilibria in the case of Poisson interactions was first studied in \cite{herau_2016_global} and improved in \cite{arnold_tosh_VFP}. Our framework allows to reduce even more the regularity required for the initial data $F_\ini$.
	\begin{cor}[Repulsive VPFP]\label{cor:VPFP}
		Under Assumptions \ref{assu:confinement}, consider the Vlasov-Poisson-Fokker-Planck equation with Coulomb interactions, in dimension $d=3$,
		\begin{equation}\label{eq:VPFPc}
			\begin{cases}
				\displaystyle \partial_t F + v \cdot \nabla_x F - \nabla_x \left( \Psi_F + V \right) \cdot \nabla_v F = \nu \nabla_v \cdot \left( v F + \nabla_v F \right),
				\\
				\displaystyle -\delta^2\Delta\Psi_F (t, x) =  \int_{\R^{d} } F(t, x, v)\,  \d v, \\
				\displaystyle F|_{t=0} = F_\ini,
			\end{cases}
		\end{equation}
		for any (arbitrarily small) $\delta>0$. Then for any $\frac14<s\leq 1$, there is a constant $R>0$ such that if		
		\[\|F_\ini - F_\star\|_{H^s_xL^2_v(F_\star^{-1})}< R,\]
		then \eqref{eq:VPFPc} has a unique solution $F\in \mathcal{C}([0,\infty);H^s_xL^2_v(F_\star^{-1}))$. Moreover, there are constants $C>0$ and $\lambda$ such that for all $t>0$
		\[
		\|F(t) - F_\star\|_{H^s_xL^2_v(F_\star^{-1})}\leq C \|F_\ini - F_\star\|_{H^s_xL^2_v(F_\star^{-1})}e^{-\lambda t}.
		\]
	\end{cor}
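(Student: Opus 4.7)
The plan is to identify \eqref{eq:VPFPc} with the general self-consistent equation \eqref{eq:SCVFP} and to check that the Coulomb interaction fits into Assumption \ref{assu:interaction}, so that Theorem \ref{thm:stability} can be invoked directly with $s_c = 1/4$.

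First I would rewrite the Poisson problem as a convolution. In dimension $d=3$, inverting $-\delta^2\Delta$ gives the explicit Newtonian representation
\[
\Psi_F(t,x) = (k\ast\rho_F)(t,x), \qquad k(x) = \frac{1}{4\pi\delta^2\,|x|}, \qquad \rho_F(t,x) = \int_{\R^3}F(t,x,v)\,\d v,
\]
which places the equation in the framework of \eqref{eq:SCVFP} with kernel $k$ of the form of Example~\ref{exam:Riesz_repuls} with $\alpha=2$, $I=(4\pi\delta^2)^{-1}$. The kernel is even, non-negative, and has non-negative Fourier transform $\widehat{k}(\xi)=(\delta^2|\xi|^2)^{-1}\geq 0$.

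Next I would verify Assumption~\ref{assu:interaction}. Since $k\geq 0$, property~\eqref{eq:positivity_k} is immediate. Since $\widehat{k}\geq 0$, Plancherel gives $\langle\KK^\text{e}h,h\rangle=\int\widehat{k}|\widehat{h}|^2\,\d\xi\geq 0$, so \eqref{eq:lower_k_sym} holds with $\underline{\kappa}^\text{e}=0$ (for any $\theta$). For the continuity estimate~\eqref{eq:bounds_k}, the Coulomb kernel satisfies $k\in L^{3,\infty}$ and $\nabla k\in L^{3/2,\infty}$. By the weak Young/Hardy--Littlewood--Sobolev inequalities, convolution with $k$ is bounded from $L^{6/5}$ to $L^6$, and convolution with $\nabla k$ is bounded from $L^2$ to $L^6$. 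Since $L^1\cap L^2\hookrightarrow L^{6/5}\cap L^2$ by interpolation, we obtain \eqref{eq:bounds_k} with $p=q=6$ (so in particular $q=6>3=d$), with finite constants $\overline{\kappa}^\text{e}=O(\delta^{-2})$ and $\overline{\kappa}^\text{o}=0$ (since $k$ is even).

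Finally I would plug these into Theorem~\ref{thm:stability}. Because $\underline{\kappa}^\text{e}=0$ and $\overline{\kappa}^\text{o}=0$, both smallness conditions $\underline{\kappa}^\text{e}<\delta^\text{e}$ and $\overline{\kappa}^\text{o}<\delta^\text{o}$ are trivially satisfied, with no constraint on the Debye length $\delta>0$ (which only affects the magnitude of $\overline{\kappa}^\text{e}$, absorbed in $\overline{\kappa}_{\text{max}}$). The critical regularity from Theorem~\ref{thm:stability} becomes
\[
s_c = \frac{3}{2}\left(\frac{d}{q}-\frac{1}{3}\right) = \frac{3}{2}\left(\frac{3}{6}-\frac{1}{3}\right) = \frac{1}{4},
\]
so any $s\in(1/4,1]$ is admissible. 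Theorem~\ref{thm:stability} then yields the unique steady state $F_\star$, the local existence-uniqueness of $F\in\CC([0,\infty);H^s_xL^2_v(F_\star^{-1}))$ for initial data within a ball of radius $R$, and the exponential decay estimate \eqref{eq:nonlinear_decay}, which is exactly the statement of the corollary. There is no genuine obstacle here: the only mildly delicate point is matching the weak-type Lebesgue exponents so that the condition $q>d$ is met, and this is precisely what forces $s_c=1/4$ and hence the lower bound $s>1/4$.
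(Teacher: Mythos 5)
Your proposal is correct and follows exactly the route the paper intends: it is the combination of Example~\ref{exam:Riesz_repuls} (Coulomb kernel in $d=3$ gives $q=6>d$, $\underline{\kappa}^\text{e}=\overline{\kappa}^\text{o}=0$ via non-negativity of $k$ and $\widehat{k}$, so both smallness conditions are vacuous regardless of $\delta$) with Theorem~\ref{thm:stability}, and the computation $s_c=\tfrac32(\tfrac36-\tfrac13)=\tfrac14$ matches. The verification of \eqref{eq:bounds_k} via weak-type Young/HLS with $k\in L^{3,\infty}$, $\nabla k\in L^{3/2,\infty}$ is exactly the mechanism the paper invokes.
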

	
	In this result, we improve the regularity assumption on the initial condition compared to previous works. We only require $H^{\frac{1}{4}+}_x L^2_v$ instead of (at least) $H^{\frac{1}{2} +}_{x, v}$ regularity \cite{herau_2016_global, arnold_tosh_VFP}. In particular we do not require any regularity in the velocity variable for the initial condition. Moreover the results hold for any arbitrarily small Debye length, namely out of weakly nonlinear regimes.

	As pointed out in Example \ref{exam:singular_attractive}, the hypothesis \eqref{eq:positivity_k} does not hold in the case of attractive Newtonian interactions. However, this assumption is necessary only to prove the existence of steady states, which are already known to exist \cite{bouchut_1995_long}. The remaining assumptions still hold, which allows to prove the stability of said equilibria.
	
	\begin{cor}[Attractive VPFP]\label{cor:VPFPn}
		Under Assumptions \ref{assu:confinement}, consider the Vlasov-Poisson-Fokker-Planck equation with Newton interactions, in dimension $d=3$,
		\begin{equation}\label{eq:VPFPn}
			\begin{cases}
				\displaystyle \partial_t F + v \cdot \nabla_x F - \nabla_x \left( \Psi_F + V \right) \cdot \nabla_v F = \nu \nabla_v \cdot \left( v F + \nabla_v F \right),
				\\
				\displaystyle \Delta\Psi_F (t, x) =  \Gamma\int_{\R^{d} } F(t, x, v)\,  \d v, \\
				\displaystyle F|_{t=0} = F_\ini.
			\end{cases}
		\end{equation}
		Then there is $\Gamma_\text{max}>0$ such that if 
		\[
		\Gamma<\Gamma_\text{max},
		\]
		then for any $\frac14<s\leq 1$, there is a constant $R>0$ such that if		
		\[\|F_\ini - F_\star\|_{H^s_xL^2_v(F_\star^{-1})}< R,\]
		then \eqref{eq:VPFPc} has a unique solution $F\in \mathcal{C}([0,\infty);H^s_xL^2_v(F_\star^{-1}))$. Moreover, there are constants $C>0$ and $\lambda$ such that for all $t>0$
		\[
		\|F(t) - F_\star\|_{H^s_xL^2_v(F_\star^{-1})}\leq C \|F_\ini - F_\star\|_{H^s_xL^2_v(F_\star^{-1})}e^{-\lambda t}.
		\]
	\end{cor}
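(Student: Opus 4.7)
The plan is to reduce Corollary \ref{cor:VPFPn} to Theorem \ref{thm:stability}, after handling the existence of the steady state separately, since the positivity hypothesis \eqref{eq:positivity_k} — which is only used in the existence part of Theorem \ref{thm:steady_state} — fails in the attractive case. First, I would rewrite the Poisson equation $\Delta \Psi_F = \Gamma \int F\,\d v$ in $d=3$ as a convolution $\Psi_F = k \ast \rho_F$ with the attractive Newton kernel
\[
k(x) = -\frac{\Gamma}{4\pi |x|}.
\]
By the analysis of Examples \ref{exam:Riesz_repuls} and \ref{exam:singular_attractive} applied with $\alpha = 2$ and $d = 3$, the kernel $k$ verifies the continuity bounds \eqref{eq:bounds_k} with $q = 6 > d$, the coercivity bound \eqref{eq:lower_k_sym}, and the symmetry $\overline{\kappa}^\text{o} = 0$, with constants satisfying
\[
\overline{\kappa}^\text{e} = O(\Gamma) \qquad \text{and} \qquad \underline{\kappa}^\text{e} = O(\Gamma) \quad \text{as } \Gamma \to 0.
\]

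Next, I would fix $\overline{\kappa}_\text{max}$ so that $\overline{\kappa}^\text{e} \leq \overline{\kappa}_\text{max}$ uniformly for $\Gamma \leq 1$, and then define $\Gamma_\text{max}$ to be the largest value of $\Gamma$ such that $\underline{\kappa}^\text{e} < \delta^\text{e}(\theta, \overline{\kappa}_\text{max}, R_V)$, where $\delta^\text{e}$ is the threshold produced by Theorem \ref{thm:stability}. This is possible because $\underline{\kappa}^\text{e} \to 0$ as $\Gamma \to 0$. For such $\Gamma$, existence of a steady state of the form $F_\star(x,v) = e^{-V_\star(x)} M(v)$ with $V_\star - V \in W^{2,\infty}$ is classical for the attractive VPFP system under Assumption \ref{assu:confinement}, and can be imported from \cite{bouchut_1995_long}. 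Uniqueness of this steady state, together with the functional inequalities listed in Section \ref{scn:functional_inequalities_steady_state}, is the delicate point: I would revisit the proof of Theorem \ref{thm:steady_state} and observe that the geometric free-energy argument mentioned in the remark following that theorem relies only on the coercivity \eqref{eq:lower_k_sym} and the continuity estimates \eqref{eq:bounds_k}, not on \eqref{eq:positivity_k}. Hence uniqueness follows as soon as $\underline{\kappa}^\text{e} < \delta^\text{e}$, which holds by the choice of $\Gamma_\text{max}$.

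With the existence and uniqueness of $F_\star$ in hand and all other hypotheses of Theorem \ref{thm:stability} verified, I would apply that theorem directly. The smallness condition $\overline{\kappa}^\text{o} < \delta^\text{o}$ is trivially satisfied since the Newton kernel is symmetric, and the condition on $\underline{\kappa}^\text{e}$ is built into $\Gamma_\text{max}$. The critical exponent is computed exactly as in Corollary \ref{cor:VPFP}: with $q = 6$ and $d = 3$,
\[
s_c = \frac{3}{2}\left(\frac{3}{6} - \frac{1}{3}\right) = \frac{1}{4},
\]
so the admissible range of regularity is $\frac14 < s \leq 1$. The exponential decay estimate \eqref{eq:nonlinear_decay} of Theorem \ref{thm:stability} then yields the announced bound.

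The main obstacle is extracting uniqueness of $F_\star$ without access to \eqref{eq:positivity_k}: one must confirm that the free-energy uniqueness argument behind Theorem \ref{thm:steady_state} is robust to mildly negative $\KK^\text{e}$ (in the regime $\underline{\kappa}^\text{e} < \delta^\text{e}$) and does not secretly use the positivity hypothesis. Everything else is a matter of tracking constants through Theorem \ref{thm:stability} and verifying that the Newton kernel fits into the framework with the quantitative bounds above.
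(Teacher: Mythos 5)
Your proposal is correct and follows essentially the same route as the paper: verify that the attractive Newton kernel satisfies all hypotheses of Assumption \ref{assu:interaction} except \eqref{eq:positivity_k} (with $q=6>d=3$, $\overline{\kappa}^\text{o}=0$, and $\underline{\kappa}^\text{e},\overline{\kappa}^\text{e}=O(\Gamma)$), import existence of the steady state from \cite{bouchut_1995_long} since positivity is only used for existence, and then invoke Theorem \ref{thm:stability} with $s_c=\frac14$. Your extra care about checking that the free-energy uniqueness argument of Corollary \ref{cor:uniqueness} survives without \eqref{eq:positivity_k} (where the $L^\infty$ upper bound on fixed points must be re-derived from the boundedness of $\KK\rho_\star$ rather than from its sign) is a point the paper leaves implicit, and is a worthwhile addition.
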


	\begin{rem}[Coulomb and Newton kernels in small dimensions] Our approach allows to prove the stability of equilibria in dimension $d=1, 2$ for Coulomb and (small) Newton interactions, up to a minor extra assumption on $V$. Let us only briefly comment on this as these cases have already been investigated in the literature (see \cite{ADLT} and references therein). Our general framework does not exactly fit these situations because the interaction kernel blows up for large relative distances, which prevents $\KK$ from mapping $L^1 \cap L^2$ onto some Lebesgue space. The study of steady states is classical (see \cite{bouchut_1995_long} and references therein) and their smoothness can be proved following the approach of Section \ref{scn:regularity_fixed_point}. This then allows to prove the various functional inequalities of Section \ref{scn:functional_inequalities_steady_state}.	To prove the stability of these equilibria, the assumption that $\KK$ maps in some Lebesgue space is used only in the proof of Lemma \ref{lem:Hilbert_space_structure}, which still holds up to assuming also that (for instance) $\la \cdot \ra^5 \lesssim e^{V/2}$ thanks to the observation that
		$\| \la \cdot \ra^{-3} \KK(\rho) \|_{L^\infty} \le C \| \la \cdot \ra^3 \rho \|_{L^2} \, .$	In the end, the same result as Corollaries \ref{cor:VPFP} and \ref{cor:VPFPn} would then hold with $0 \le s \le 1$. 
	\end{rem}

	In the case of the synchrotron kernel \eqref{eq:synchrotron}, we obtain the same result as in \cite{cesbron2023vlasov}.
	
	\begin{cor}[Synchrotron kernels]\label{cor:synchrotron}
		Under Assumptions \ref{assu:confinement} (without \eqref{eq:smoothness_V}, see Remark \ref{rem:negative_critical_smoothness}), consider the self-consistent Vlasov-Fokker-Planck equation for a relativistic electron bunch in dimension $d=1$
		\begin{equation}\label{eq:synchrotron}
			\begin{cases}
				\displaystyle \partial_t F + v \cdot \nabla_x F - \nabla_x \left( \Psi_F + V \right) \cdot \nabla_v F = \nu \nabla_v \cdot \left( v F + \nabla_v F \right),
				\\
				\displaystyle \Psi_F (t, x) =  I \int_{\R^{d} } k_S(x-y) F(t, x, v)\,  \d v, \\
				\displaystyle F|_{t=0} = F_\ini,
			\end{cases}
		\end{equation}
		where $k_S$ is given by \eqref{eq:kSynchrotron}. Then there is $I_\text{max}>0$ such that if 
		\[
		I < I_\text{max},
		\]
		then for any $0 \le s \le 1$, there is a constant $R>0$ such that if
		\[\|F_\ini - F_\star\|_{H^s_xL^2_v(F_\star^{-1})}< R,\]
		then \eqref{eq:synchrotron} has a unique solution $F\in \mathcal{C}([0,\infty); H^s_x L^2_v(F_\star^{-1}))$. Moreover, there are constants $C>0$ and $\lambda$ such that for all $t>0$
		\[
		\|F(t) - F_\star\|_{H^s_xL^2_v(F_\star^{-1})}\leq C \|F_\ini - F_\star\|_{H^s_xL^2_v(F_\star^{-1})}e^{-\lambda t}.
		\]
	\end{cor}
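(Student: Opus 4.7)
The plan is to reduce the corollary directly to Theorem~\ref{thm:stability} by verifying Assumption~\ref{assu:interaction} for the scaled synchrotron kernel $I k_S$ and checking that the critical smoothness exponent $s_c$ is nonpositive, so that every $s \in [0,1]$ is admissible. The first step is to establish that $k_S \in W^{1,\infty}(\R)$. Near the origin, Taylor expansion of $\sinh^{-1}$ gives $k_S(x) \sim \tfrac{8}{9}x$ as $x\to 0^+$, so $k_S$ extends continuously through zero (with a kink in the derivative but bounded slope). At infinity, using $\sinh^{-1}(x) = \ln(2x)+o(1)$, the numerator $2\cosh(\tfrac{5}{3}\sinh^{-1}(x))-2\cosh(\sinh^{-1}(x))$ grows like $(2x)^{5/3}$ while $\sinh(2\sinh^{-1}(x))\sim 2x^2$, yielding $k_S(x) \sim C\,x^{-1/3}\to 0$, and an analogous computation bounds $k_S'$. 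Hence $I k_S \in W^{1,\infty}$ with $W^{1,\infty}$-norm linear in $I$. Non-negativity follows from the explicit formula after an innocuous translation by a constant (see Remark~\ref{rem:positivity}).

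Next, the Lipschitz example in Section~\ref{scn:interaction} applies with $p=q=\infty$ and furnishes
$\underline{\kappa}^\text{e} = I\,\|k_S^\text{e}\|_{L^\infty}$, $\overline{\kappa}^\alpha = I\,\|k_S^\alpha\|_{W^{1,\infty}}$ for $\alpha\in\{\text{e},\text{o}\}$, so all boundedness, coercivity, and positivity conditions of Assumption~\ref{assu:interaction} are satisfied. Since $d=1$ and $q=\infty$, the critical exponent is
\[
s_c = \frac{3}{2}\!\left(\frac{d}{q}-\frac{1}{3}\right) = -\frac{1}{2}<0,
\]
so by Remark~\ref{rem:negative_critical_smoothness} the growth condition~\eqref{eq:smoothness_V} on $V$ is not needed and every $s\in[0,1]$ satisfies $s>s_c$.

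Finally, because both $\underline{\kappa}^\text{e}$ and $\overline{\kappa}^\text{o}$ scale linearly with $I$, we may fix $\overline{\kappa}_{\max}= I_0\,\|k_S\|_{W^{1,\infty}}$ for any chosen upper bound $I_0$ and then select $I_\text{max}\le I_0$ small enough so that
\[
I_\text{max}\,\|k_S^\text{e}\|_{L^\infty}<\delta^\text{e}(\theta,\overline{\kappa}_{\max},R_V) \quad\text{and}\quad I_\text{max}\,\|k_S^\text{o}\|_{W^{1,\infty}} <\delta^\text{o}\bigl(\overline{\kappa}_{\max},R_V,\underline{\kappa}^\text{e},\nu\bigr);
\]
both constraints can be met simultaneously since $\delta^\text{o}$ depends continuously on $\underline{\kappa}^\text{e}$ and the latter tends to zero with $I$. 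For $I<I_\text{max}$, Theorem~\ref{thm:stability} applied to \eqref{eq:synchrotron} yields the unique steady state, the local existence of $F\in\mathcal C([0,\infty);H^s_xL^2_v(F_\star^{-1}))$, and the exponential decay estimate. The only nontrivial point is the asymptotic analysis of $k_S$ needed to place it in $W^{1,\infty}$; once that is settled, the rest is a direct application of the main theorem.
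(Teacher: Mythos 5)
Your proposal is correct and follows exactly the route the paper intends: $k_S\in W^{1,\infty}$ falls under the Lipschitz-kernel example of Section~\ref{scn:interaction} (which explicitly cites \eqref{eq:kSynchrotron}), giving $p=q=\infty$, hence $s_c=-\tfrac12<0$ so Remark~\ref{rem:negative_critical_smoothness} dispenses with \eqref{eq:smoothness_V} and all $s\in[0,1]$ are admissible, and the linear scaling of $\underline{\kappa}^\text{e}$ and $\overline{\kappa}^\text{o}$ in $I$ yields the smallness conditions of Theorem~\ref{thm:stability} for $I<I_\text{max}$. The only cosmetic remark is that no translation is needed for positivity, since $k_S\ge 0$ already (for $x>0$ one has $\cosh(\tfrac53 u)>\cosh(u)$ and $\sinh(2u)>0$ with $u=\sinh^{-1}(x)>0$).
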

	
	\subsection{Outline}
	
	In Section \ref{scn:steady_state} we prove several results related to the steady state equation \eqref{eq:stationary_equation}, namely existence, uniqueness and regularity. These results are of independent interest and hold under weaker assumptions than the general Assumptions \ref{assu:confinement} and \ref{assu:interaction}. We then proceed to prove the functional inequalities related to the measure induced by this steady state in Section \ref{scn:functional_inequalities_steady_state} necessary to the stability analysis.
	
	In Section \ref{scn:linearized} we study the linearized equation around the steady state, using the so called $H^1$--hypocoercivity and hypoellipticity strategies  of \cite{herau_2004_isotropic, villani_2009_hypocoercivity}. We also use the so called $L^2$--hypocoercivity of \cite{DMS}, and more precisely we take inspiration from \cite{ADLT} which incorporates $\KK^e$ into the functional setting so that no smallness assumption on the upper bound $\overline{\kappa}^e$ is required. In turn we consider $\KK$ to be almost positive symmetric, that is we assume $\overline{\kappa}^o$ and $\underline{\kappa}^e$ small. One novelty of this work is that we apply the $L^2$--hypocoercivity strategy to $\nabla_x (F - F_\star)$ in order to build a linear theory in $H^1_x L^2_v$ so that no regularity in velocity is required, which requires the mild growth assumption \eqref{eq:smoothness_V} on $V$.
	
	In Section \ref{scn:nonlinear} we combine all previous results to prove Theorem \ref{thm:stability}. We first interpolate the hypocoercivity and hypoellipticity results to deduce that the linearized flow maps initial data and source continuously for some appropriate ‘‘natural'' norm. In Section \ref{scn:nonlinear_estimate}, we show that the nonlinearity is bounded in this ‘‘natural'' norm, so that, in Section \ref{scn:nonlinear_proof}, the proof of Theorem \ref{thm:stability} reduces to a straightforward application of Banach-Picard's fixed point theorem.
	
	In Section \ref{scn:perspectives}, we present some perspectives, namely some possible improvements regarding the regularity assumptions on $\nabla \KK$ and the initial data, and some continuation of the present work.

	\section{Existence, uniqueness and regularity of the steady state}
	
	\label{scn:steady_state}
	
	The first goal of this section is to prove Theorem~\ref{thm:steady} below (and thus Theorem~\ref{thm:steady_state}) concerning the existence, uniqueness and regularity of steady states. From this construction, we will derive various functional inequalities related to the steady state in Section~\ref{scn:functional_inequalities_steady_state}. 
	
	\begin{theo}\label{thm:steady} Assume that the confinement potential $V$ satisfies, for some $N\geq2$
		\[
		\begin{cases}
			\|e^{-V}\|_{L^1} = 1\\
			\forall 1\le n \le N,\quad \left(1+| \nabla^n V |^\frac{N}{n}\right) e^{-V}\in L^1 \cap L^\infty
		\end{cases}
		\]
		and that the interaction operator $\KK$ satisfies that for some $p,q\in[2,\infty]$,
		\[
		\begin{cases}
			\| \KK \|_{L^1 \cap L^2 \to L^p}+\| \nabla \KK \|_{L^1 \cap L^2 \to L^q} \leq  \overline{\kappa},\\
			\forall \rho \in L^1 \cap L^2, \quad \rho \ge 0 \Rightarrow \KK \rho \ge 0,\\
			\|\KK^* \left( e^{-V} \right)\|_{ L^\infty}\leq\zeta.
		\end{cases}
		\]
		Then, smooth and rapidly decaying solutions to the stationary Vlasov-Fokker-Planck equation 
		\begin{equation}\label{eq:steadySCVFP}
			v \cdot \nabla_x F_\star - \nabla_x \left( \Psi_{F_\star} + V \right) \cdot \nabla_v F_\star = \nu \nabla_v \cdot \left( v F_\star + \nabla_v F_\star \right)
		\end{equation}
		are given by all functions
		\[
		F_\star(x,v) = \rho_\star(x) M(v),
		\]
		with $\rho_\star$ a solution of the fixed point problem
		\begin{equation}
			\label{eq:stationary_equation}
			\rho_\star = \TT(\rho_\star) := \frac{\SS(\rho_\star)}{ \| \SS(\rho_\star) \|_{L^1} }, \qquad \SS(\rho) := e^{-V-\KK \rho} \,.
		\end{equation}
		Under the hypotheses on the potentials, there exists a solution $\rho_\star = e^{-V_\star}>0$ to \eqref{eq:stationary_equation}. Moreover, if additionally  $q\geq d/2$ and $N>1+d/q$, then any solution satisfies $V_\star-V\in W^{2,\infty}$. Finally under the additional assumptions that \eqref{eq:lower_k_sym} holds with $\underline{\kappa}^\text{e}<\delta^\text{e}(\theta, \overline{\kappa}, R_V)$, then $\rho_\star$ is uniquely defined, and therefore there is a unique steady state to \eqref{eq:steadySCVFP}.
	\end{theo}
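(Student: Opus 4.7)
I would organize the argument in four parts, one for each claim of the theorem.

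\emph{Structure of steady states.} Starting from a smooth rapidly decaying solution $F_\star$ of \eqref{eq:steadySCVFP}, I would test the equation against $\log(F_\star/M)$ and integrate by parts. The Hamiltonian transport part vanishes (after the substitution $f = F_\star/M$ it is in divergence form against the Liouville measure), so only the Fokker--Planck dissipation survives:
\[
\nu \int_{\R^{2d}} F_\star \left| \nabla_v \log \tfrac{F_\star}{M} \right|^2 \d x\, \d v = 0,
\]
which forces $F_\star(x,v) = \rho_\star(x) M(v)$. Plugging the ansatz back into \eqref{eq:steadySCVFP} and identifying the coefficient of $v$ reduces the stationary equation to $\nabla_x \log \rho_\star = -\nabla_x(V + \KK\rho_\star)$, which together with $\int \rho_\star = 1$ is precisely \eqref{eq:stationary_equation}.

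\emph{Existence.} I would apply Schauder's fixed point theorem to $\TT$ on the convex set $\CC = \{0 \le \rho \le e^\zeta e^{-V},\ \int \rho = 1\}$. The two bounds defining $\CC$ are preserved by $\TT$: the positivity assumption $\KK\rho \ge 0$ gives $\SS(\rho) \le e^{-V}$ pointwise; Jensen's inequality against the probability measure $e^{-V}\d x$, combined with $\|\KK^*(e^{-V})\|_{L^\infty} \le \zeta$, yields
\[
\|\SS(\rho)\|_{L^1} \ge \exp\bigl(-\la \KK^*(e^{-V}), \rho\ra\bigr) \ge e^{-\zeta},
\]
so $\TT(\rho) \le e^\zeta e^{-V}$. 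Continuity of $\TT$ on $\CC$ follows by dominated convergence: if $\rho_n \to \rho$ in $L^1$ with $\rho_n \le e^\zeta e^{-V}$, then $\rho_n \to \rho$ in $L^2$ (by DCT with dominant $e^{-V}$), hence $\KK\rho_n \to \KK\rho$ almost everywhere along a subsequence, and $\SS(\rho_n) \to \SS(\rho)$ in $L^1$. Compactness of $\CC$ in $L^1$ follows from the tightness and uniform integrability conferred by the domination $\rho \le e^\zeta e^{-V}$ (Dunford--Pettis in the weak $L^1$ topology), or can be upgraded to the strong topology a posteriori through the equicontinuity gained from $\nabla\KK : L^1\cap L^2\to L^q$.

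\emph{Regularity by bootstrap.} Starting from $\rho_\star \le e^\zeta e^{-V} \in L^1\cap L^\infty$, I use the identity $\nabla\rho_\star = -\rho_\star(\nabla V + \nabla\KK\rho_\star)$. The moment hypothesis $|\nabla V|^N e^{-V} \in L^1\cap L^\infty$ with $N \ge 2$ puts $\rho_\star \nabla V \in L^1\cap L^2$, and, once $\nabla \KK \rho_\star \in L^q$ has been shown by \eqref{eq:bounds_k}, this also controls $\rho_\star\nabla\KK\rho_\star$ in $L^1\cap L^2$ (using $q > d$ or iterated Hölder). Iterating gives a Faà di Bruno-type expansion of $\nabla^n \rho_\star$ as a polynomial in $\rho_\star$, $\nabla^j V$ and $\nabla^j \KK \rho_\star$ for $j \le n$; the hypotheses $q \ge d/2$ and $N > 1 + d/q$ then activate Morrey's embedding $W^{1,q} \hookrightarrow L^\infty$ at the second order, giving $\KK\rho_\star \in W^{2,\infty}$ and hence $V_\star - V \in W^{2,\infty}$.

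\emph{Uniqueness --- main obstacle.} Let $\rho_1, \rho_2$ be two solutions of \eqref{eq:stationary_equation}. The geometric idea is to introduce the two tilted functionals
\[
\EE_i[\rho] = \int \rho\log\rho + \int V\rho + \tfrac12 \la \KK^e\rho, \rho\ra + \la \KK^o\rho_i, \rho\ra, \qquad i = 1, 2.
\]
Each $\rho_i$ is a critical point of $\EE_i$ by the fixed-point equation (the tilt replaces the nonlinear odd term by its value anchored at $\rho_i$). Under the smallness assumption $\underline{\kappa}^e < \delta^e(\theta, \overline{\kappa}, R_V)$, the lower bound \eqref{eq:lower_k_sym} together with the logarithmic Sobolev / Poincaré inequality for $e^{-V}\d x$ and an $L^1$--$L^2$ interpolation controlled by $\theta$ make $\EE_i$ strictly convex on the relevant set of probability densities with a uniform modulus $c > 0$, so $\rho_i$ is its unique minimizer. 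Adding the two minimization inequalities $\EE_i[\rho_j] - \EE_i[\rho_i] \ge c\|\rho_1-\rho_2\|^2$ for $(i,j) \in \{(1,2),(2,1)\}$, the convex parts cancel and the tilts combine into $\la \KK^o(\rho_1 - \rho_2), \rho_2-\rho_1\ra \ge 2c \|\rho_1-\rho_2\|^2$; but the left-hand side vanishes because $\KK^o$ is skew-symmetric, forcing $\rho_1 = \rho_2$. The real work, and the main obstacle, is calibrating the threshold $\delta^e(\theta, \overline{\kappa}, R_V)$ so that strict convexity on the cone of admissible densities holds with a quantitative modulus compatible with both the logarithmic entropy term and the bilinear perturbation $\la \KK^e\cdot,\cdot\ra$.
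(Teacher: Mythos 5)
Your architecture matches the paper's on all four counts (rigidity of steady states, Schauder for existence, Faà di Bruno bootstrap for regularity, free-energy monotonicity for uniqueness), so the route is essentially the same; but two steps are softer than they should be. For existence, Dunford--Pettis only gives weak $L^1$ compactness, which does not feed Schauder's theorem in the strong $L^1$ topology where your continuity argument lives; the paper's mechanism is to prove strong precompactness of the \emph{image} $\TT(\CC)$ by combining the concentration bound $e^{V}\TT(\rho)\le e^{\zeta}$ (tightness) with the gradient bound $\|\nabla\TT(\rho)\|_{L^s}\lesssim R_Ve^{\zeta}(1+\overline\kappa+\overline\kappa M)$ coming from $\nabla\KK:L^1\cap L^2\to L^q$, then Rellich--Kondrachov. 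You mention this upgrade as optional; it is in fact the load-bearing step. Also note that in your rigidity argument the transport term does not vanish by divergence structure alone: testing against $\log(F_\star/M)$ leaves $\int\nabla_x(\Psi+V)\cdot vF_\star$, which you must kill using $\nabla_x\cdot\int vF_\star\,\d v=0$; the paper sidesteps this by multiplying by $F_\star/G_\star$ with the local Gibbs state $G_\star$ annihilated by the full transport field.

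For uniqueness, your two tilted functionals $\EE_1,\EE_2$ are an equivalent repackaging of the paper's identity $(\d_\rho\FF[\rho_1]-\d_\rho\FF[\rho_0])\cdot(\rho_1-\rho_0)=0$ at fixed points (skew-symmetry of $\KK^{\text{o}}$ in both cases), so the geometric idea is right. However, the quantitative modulus of convexity that you defer as ``the real work'' is exactly what the paper supplies, and not with the tools you name: no logarithmic Sobolev or Poincaré inequality enters. The entropy difference is bounded below by $\|\rho_1-\rho_0\|_{L^1}^2$ via Csiszár--Kullback--Pinsker and by $(R_Ve^{\zeta})^{-1}\|\rho_1-\rho_0\|_{L^2}^2$ via the pointwise upper bound on fixed points (through $(x-y)\log(x/y)=(x-y)^2\int_0^1(ty+(1-t)x)^{-1}\d t$); matching these against the $\theta$-weighted lower bound \eqref{eq:lower_k_sym} is what produces the threshold $\delta^{\text{e}}(\theta,\overline\kappa,R_V)$. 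Without this step your proof of the uniqueness claim is incomplete, though the missing piece is short once the right two inequalities are identified.
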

	
	\subsection{Existence of fixed points}
	
	From here we consider that the domain of $\TT$ is the convex set
	$$\CC_M := \left\{ \rho \in L^1 \cap L^r \, : \, \| \rho \|_{L^1} \le 1, ~ \| \rho \|_{L^r} \le M, ~ \rho\geq0 \right\} \, $$
	with $M>0$ to be determined. The purpose of the Lebesgue index $r$ is to make the following estimates more general than what is strictly needed to prove Theorem~\ref{thm:steady}. In the proof of Theorem~\ref{thm:steady}, we shall take $r=2$.
	
	\begin{lem}[Properties of $\TT$]
		\label{lem:properties_T}
		Assume that the interaction potential satisfies the bound
		\begin{equation*}
			\| \nabla \KK \|_{L^1 \cap L^r \to L^q} \le \overline{\kappa}, \quad 1 \le r \le q \le \infty \, ,
		\end{equation*}
		and it is assumed to be positive in the sense that
		\begin{equation*}
			\forall \rho \in L^1 \cap L^r, \quad \rho \ge 0 \Rightarrow \KK \rho \ge 0 \, .
		\end{equation*}
		The confining potential is assumed to be such that
		\begin{equation}
			\label{eq:weighted_Sobolev_V}
			\|e^{-V}\|_{L^1} = 1\, \quad \text{and}\quad \left\| (1+|\nabla V|)\, e^{-V} \right\|_{L^1 \cap L^\infty} \le R_V.
		\end{equation}
		Finally, we assume the following relationship between $\KK$ and $V$:
		\begin{equation}
			\label{eq:K_adj_confinement}
			\left\| \KK^* \left( e^{-V} \right)\right\|_{ L^{\infty} } \le \zeta \, .
		\end{equation}
		Under these assumptions, $\TT$ satisfies for any $\rho \in \CC_M$
		\begin{equation}
			\label{eq:upper_bound_T}
			\left\| \TT(\rho) \right\|_{L^s} \le e^{\zeta} R_V \quad \text{and} \quad \quad \| \TT(\rho) \|_{L^1} = 1 \,,\quad s\in[1,\infty],
		\end{equation}
		and more precisely the concentration estimate
		\begin{equation}
			\label{eq:concentration_T}
			\left\| e^{  V / s' } \TT(\rho) \right\|_{L^s} \le e^{\zeta/s'}\, {,\quad s\in[1,\infty],}
		\end{equation}
		as well as the regularity estimate
		\begin{equation}
			\label{eq:upper_bound_grad_T}
			\left\| \nabla \TT(\rho) \right\|_{L^s} \le R_V e^{\zeta} \left( 1 + \overline{\kappa} + \overline{\kappa} M \right)\,{,\quad s\in[1,q]}.
		\end{equation}
		Furthermore, $\TT(\CC_M)$ is {precompact in }$L^1$, and $\TT$ satisfies the continuity estimate
		\begin{equation}
			\label{eq:Holder_T}
			\| \TT(\rho) - \TT(\sigma) \|_{L^s} \le 2e^{\zeta}  \zeta^{ 1 / s } R_V^{1/s'} \| \rho-\sigma \|_{L^1}^{1/s} \,,\quad {s \in [1, \infty]} .
		\end{equation}
	\end{lem}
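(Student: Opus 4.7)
The proof plan rests on two preliminary estimates from which everything else flows easily. First, for $\rho\in\CC_M$, the positivity assumption on $\KK$ gives $\KK\rho\geq 0$ and hence the pointwise upper bound $\SS(\rho)=e^{-V-\KK\rho}\leq e^{-V}$. Second, Jensen's inequality applied to the probability measure $e^{-V}\d x$ and the convex function $t\mapsto e^{-t}$ yields
$$\|\SS(\rho)\|_{L^1} \;\geq\; \exp\!\left(-\int \KK\rho\cdot e^{-V}\d x\right) \;=\; e^{-\langle \rho,\,\KK^*(e^{-V})\rangle} \;\geq\; e^{-\zeta\|\rho\|_{L^1}}\;\geq\; e^{-\zeta},$$
where I have used the duality between $\KK$ and $\KK^*$ and the hypothesis \eqref{eq:K_adj_confinement}. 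Combined with $\|e^{-V}\|_{L^s}\leq R_V$ (obtained by $L^1$-$L^\infty$ interpolation from \eqref{eq:weighted_Sobolev_V}), these immediately give \eqref{eq:upper_bound_T} and the identity $\|\TT(\rho)\|_{L^1}=1$.

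For the concentration estimate \eqref{eq:concentration_T}, I would exploit the algebraic identity $s/s'=s-1$ to write the factorization
$$\bigl(e^{V/s'}\TT(\rho)\bigr)^s \;=\; \bigl(e^{V}\TT(\rho)\bigr)^{s/s'}\TT(\rho),$$
then bound the first factor uniformly in $L^\infty$ via $e^V\TT(\rho)=e^{-\KK\rho}/\|\SS(\rho)\|_{L^1}\leq e^\zeta$, leaving a remaining factor of $\TT(\rho)$ which integrates to one. For the gradient bound \eqref{eq:upper_bound_grad_T} I differentiate $\TT(\rho)=\SS(\rho)/\|\SS(\rho)\|_{L^1}$ to obtain $\nabla\TT(\rho)=-(\nabla V + \nabla\KK\rho)\TT(\rho)$, use $\TT(\rho)\leq e^\zeta e^{-V}$ pointwise, and split the two pieces: the term $|\nabla V|e^{-V}$ is controlled directly by the hypothesis on $V$, while Hölder's inequality with $1/s=1/q+1/t$ and $\|e^{-V}\|_{L^t}\leq R_V$ handles $|\nabla\KK\rho|e^{-V}$ using the hypothesis $\|\nabla\KK\rho\|_{L^q}\leq\overline{\kappa}(1+M)$.

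For precompactness of $\TT(\CC_M)$ in $L^1(\R^d)$ I would apply the Fréchet–Kolmogorov criterion. The uniform $W^{1,s}$ bound just established gives compactness in $L^1$ on any ball via Rellich–Kondrachov; the concentration estimate provides the needed tightness at infinity, since for $s\in(1,\infty)$
$$\int_{|x|>R}\TT(\rho)\d x \;\leq\; \|e^{V/s'}\TT(\rho)\|_{L^s}\,\bigl\|e^{-V/s'}\mathbf{1}_{|x|>R}\bigr\|_{L^{s'}}\;\xrightarrow[R\to\infty]{}\;0$$
uniformly on $\CC_M$ (the weight $e^{-V}$ is integrable by \eqref{eq:weighted_Sobolev_V}).

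For the Hölder continuity \eqref{eq:Holder_T}, I would use the splitting
$$\TT(\rho)-\TT(\sigma)\;=\;\frac{\SS(\rho)-\SS(\sigma)}{\|\SS(\rho)\|_{L^1}}\;+\;\SS(\sigma)\,\frac{\|\SS(\sigma)\|_{L^1}-\|\SS(\rho)\|_{L^1}}{\|\SS(\rho)\|_{L^1}\,\|\SS(\sigma)\|_{L^1}}.$$
The $1$-Lipschitz property of $t\mapsto e^{-t}$ on $[0,\infty)$ together with the positivity of $\KK$ (which gives $|\KK h|\leq \KK|h|$) yields the pointwise bound $|\SS(\rho)-\SS(\sigma)|\leq e^{-V}\KK|\rho-\sigma|$; taking $L^1$-norm and using the duality with $\KK^*$ gives $\|\SS(\rho)-\SS(\sigma)\|_{L^1}\leq \zeta\|\rho-\sigma\|_{L^1}$, whereas $\SS\leq e^{-V}$ gives $\|\SS(\rho)-\SS(\sigma)\|_{L^\infty}\leq 2R_V$. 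Interpolation between $L^1$ and $L^\infty$, combined with the lower bound $\|\SS(\cdot)\|_{L^1}^{-1}\leq e^\zeta$, then closes the argument. The main subtlety I expect lies in the concentration step: the sharper constant $e^{\zeta/s'}$ rather than the naive $e^\zeta$ requires the specific factorization above, and getting it right drives the final form of the other estimates.
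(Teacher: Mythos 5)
Your proposal is correct and follows essentially the same route as the paper's proof: Jensen's inequality for the lower bound $\|\SS(\rho)\|_{L^1}\ge e^{-\zeta}$, the pointwise bound $\SS(\rho)\le e^{-V}$, the same splitting identity and the inequality $|\SS(\rho)-\SS(\sigma)|\le e^{-V}\KK|\rho-\sigma|$ for the Hölder continuity, and local Rellich--Kondrachov compactness combined with tightness from the concentration estimate. The only cosmetic differences are that you make the factorization behind the constant $e^{\zeta/s'}$ explicit and phrase the tightness at infinity via Hölder with the $L^s$ concentration bound rather than the weighted $L^\infty$ bound, both of which are equivalent to what the paper does.
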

	
	\begin{proof}
		We first prove the integrability bound \eqref{eq:upper_bound_T}, then the gradient estimate \eqref{eq:upper_bound_grad_T}. We then deduce the compactness from these bounds, and finally proceed to prove the Hölder continuity in $L^s$ (where $s < \infty$).
		
		\medskip
		\step{1}{Integrability estimate}
		We start by exhibiting a lower bound on the mass of $\SS(\rho)$, using Jensen's inequality with respect to the measure $e^{-V(x)} \, \d x$:
		\begin{align*}
			\int_{\R^d} \SS(\rho) \, \d x & =  \int_{\R^d} e^{- V - \KK \rho} \, \d x \\
			& \ge \exp\left( - \int_{\R^d} \KK \rho e^{-V} \, \d x \right) \\
			& = \exp\left( - \int_{\R^d} \rho \,  \KK^*  \left( e^{-V} \right) \d x \right) \, ,
		\end{align*}
		from which we deduce using \eqref{eq:K_adj_confinement} and the fact that $\| \rho \|_{L^1} \le 1$
		\begin{equation}
			\label{eq:lower_bound_mass_S}
			\int_{\R^d} \SS(\rho) \, \d x \ge e^{ - \zeta } \, .
		\end{equation}
		Clearly $\left\| \TT(\rho) \right\|_{L^1} = 1$ and since $\KK \rho \ge 0$, one easily deduces
		$$\left\| e^V \TT(\rho) \right\|_{L^\infty} \le e^{\zeta}$$
		which then yields the upper bound \eqref{eq:upper_bound_T}  and the concentration estimate \eqref{eq:concentration_T}.
		
		\step{2}{Regularity estimate}
		Starting from the observation (since $\KK \rho \ge 0$)
		$$| \nabla \SS(\rho) | = | \nabla V + \nabla \KK(\rho) | \SS(\rho) \le | \nabla V + \nabla \KK(\rho) | e^{-V}$$
		and denoting $\frac{1}{s} = \frac{1}{q} + \frac{1}{t}$, one obtains the  bound
		\begin{align*}
			\| \nabla \SS(\rho) \|_{L^s} & \le \left\| \nabla V e^{-V} \right\|_{L^s} + \| \nabla \KK(\rho) \|_{L^q} \left\| e^{-V} \right\|_{L^t} \\
			& \le R_V (1 + \overline{\kappa} \| \rho \|_{L^1 \cap L^r}) \\
			& \le R_V (1 + \overline{\kappa} + \overline{\kappa} M ) \, ,
		\end{align*}
		which, combined with \eqref{eq:lower_bound_mass_S} yields \eqref{eq:upper_bound_grad_T}.
		
		\step{3}{Compactness}
		Let us consider some sequence $( \rho_n) \subset \CC_M$ and denote its image $\sigma_n := \TT(\rho_n)$. Since \eqref{eq:concentration_T} and \eqref{eq:upper_bound_grad_T} imply that for some $C > 0$
		$$\left\| e^V \sigma_n \right\|_{L^\infty} + \| \nabla \sigma_n \|_{L^s} \le C \, ,$$
		the Rellich-Kondrachov theorem states that some subsequence, which we still denote $\sigma_n$, converges in $L^1_{\text{loc}}$ to some $\sigma_\infty$ satisfying the same bounds as $\sigma_n$. The global convergence follows from the weighted $L^\infty$ bound because for any $R > 0$
		$$\| \sigma_n - \sigma_\infty \|_{L^1} \le 2C \left\| \mathbf{1}_{|x| \ge R} e^{-V} \right\|_{L^1} + \| \mathbf{1}_{|x| \le R} (\sigma_n - \sigma_\infty) \|_{L^1}$$
		and $e^{-V} \in L^s$.  The right-hand side can be made as small as desired by taking $R$ large and then $n$ large.
		
		\step{4}{Continuity estimates}
		Using the inequality
		$$\forall a, b \ge 0 , \quad \left|e^{-a} - e^{-b}\right| \le |a-b| $$
		with $a = \KK(\rho) \ge 0$ and $b = \KK (\sigma) \ge 0$, as well as $| \KK(\rho-\sigma)| \le \KK(|\rho-\sigma|)$, which are consequences of the non-negativity of $\KK$, one has the control
		$$\left| \SS(\rho) - \SS(\sigma) \right| \le \KK (|\rho - \sigma|) e^{-V}  \, .$$
		We then deduce that
		\begin{align}
			\notag
			\| \SS(\rho) - \SS(\sigma) \|_{L^1} & \le \left\| (\rho - \sigma) \KK^* \left( e^{-V} \right) \right\|_{L^1} \\
			\notag
			& \le \| \rho - \sigma \|_{L^1} \left\| \KK^* \left( e^{-V} \right) \right\|_{L^\infty } \\
			\label{eq:Lipschitz_S}
			& \le \zeta \| \rho - \sigma \|_{L^1} \, ,
		\end{align}
		and observing that the following identity holds
		$$\TT(\rho) - \TT(\sigma) = \frac{\SS(\rho) - \SS(\sigma)}{\| \SS(\rho) \|_{L^1}} + \TT(\sigma) \frac{\| \SS(\rho) \|_{L^1} - \| \SS(\sigma) \|_{L^1} }{ \| \SS(\rho) \|_{L^1}  } \, ,$$
		one deduces from \eqref{eq:lower_bound_mass_S} and \eqref{eq:Lipschitz_S}
		$$\| \TT(\rho) - \TT(\sigma) \|_{L^1} \le 2 \zeta e^{\zeta} \| \rho - \sigma \|_{L^1}  \ .$$
		Interpolating with the bound
		$$\| \TT(\rho)-\TT(\sigma) \|_{L^\infty} \le 2 e^{\zeta} R_V \, ,$$
		one deduces \eqref{eq:Holder_T}. This concludes the proof.
	\end{proof}
	
	The following existence result is a direct consequence of Lemma \ref{lem:properties_T} and an application of Schauder's theorem with $M = e^{\zeta} R_V$ and endowing $\CC_M$ with the topology of $L^1$ (note that $\TT$ is indeed Lipschitz continuous in virtue of \eqref{eq:Holder_T}).
	
	\begin{prop}[Existence of the steady state]
		Under the assumptions of Lemma \ref{lem:properties_T}, the mapping $\TT:L^1\to L^1$ admits a fixed point in $\mathcal{C}_M$ with $M = e^{\zeta} R_V$. Furthermore, any other fixed point satisfies the estimates of Proposition~\ref{lem:properties_T}.
	\end{prop}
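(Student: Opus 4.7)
My plan is to apply Schauder's fixed point theorem directly to $\TT$ on the set $\CC_M$ with $M = e^{\zeta} R_V$ and $r = 2$, using the estimates of Lemma~\ref{lem:properties_T} as inputs. The entire content of the proposition is essentially a verification that Schauder's hypotheses are met, since all the analytic work has been done upstream.

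First I would verify that $\CC_M$ is a nonempty, convex, closed subset of the Banach space $L^1(\R^d)$. Convexity is immediate as $\CC_M$ is cut out of the positive cone by two norm constraints. Nonemptiness comes from the natural candidate $e^{-V}$: it has unit $L^1$ mass by assumption, and its $L^r$ norm is controlled by interpolation, $\| e^{-V} \|_{L^r} \le \| e^{-V} \|_{L^\infty}^{1/r'} \| e^{-V} \|_{L^1}^{1/r} \le R_V^{1/r'} \le R_V \le M$ (noting $R_V \ge \|e^{-V}\|_{L^1} = 1$). For $L^1$-closedness, given a sequence $(\rho_n) \subset \CC_M$ converging in $L^1$ to some $\rho$, I would extract a pointwise a.e. converging subsequence and apply Fatou's lemma to propagate non-negativity, the $L^r$ bound, and the $L^1$ bound to the limit.

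Next I would confirm that $\TT$ maps $\CC_M$ into itself: positivity and normalization $\|\TT(\rho)\|_{L^1}=1$ are built into the definition (and recorded in \eqref{eq:upper_bound_T}), while the $L^r$ bound $\| \TT(\rho) \|_{L^r} \le e^{\zeta} R_V$ is the other half of \eqref{eq:upper_bound_T}, which is precisely why I pick $M = e^{\zeta} R_V$ (no circularity, since this bound is uniform in $\rho \in \CC_M$). Continuity of $\TT$ on $\CC_M$ in the $L^1$ topology is the $s=1$ case of \eqref{eq:Holder_T}, giving the Lipschitz estimate $\| \TT(\rho) - \TT(\sigma) \|_{L^1} \le 2 \zeta e^{\zeta} \| \rho - \sigma \|_{L^1}$. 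Precompactness of $\TT(\CC_M)$ in $L^1$ is the third statement of Lemma~\ref{lem:properties_T}. With every hypothesis in place, Schauder's theorem (in its version for continuous maps with relatively compact image on a nonempty closed convex subset of a Banach space) yields a fixed point $\rho_\star \in \CC_M$, producing a steady density of the claimed form.

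For the ``furthermore'' clause, I would simply observe that every estimate in Lemma~\ref{lem:properties_T} was proved for arbitrary $\rho \in \CC_M$, so it transfers verbatim to any fixed point lying in $\CC_M$. I do not anticipate any serious obstacle: the proof reduces to bookkeeping. The only mild subtlety is verifying the $L^1$-closedness of $\CC_M$, which a priori mixes the $L^1$ topology with an $L^r$ constraint, but this is dispatched cleanly by Fatou's lemma as sketched above.
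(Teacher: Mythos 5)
Your proposal is correct and follows exactly the paper's route: the paper also obtains the fixed point by applying Schauder's theorem to $\TT$ on $\CC_M$ with $M = e^{\zeta} R_V$ in the $L^1$ topology, citing the self-mapping, Lipschitz continuity \eqref{eq:Holder_T}, and precompactness from Lemma~\ref{lem:properties_T}. The only difference is that you spell out the routine verifications (nonemptiness, convexity, and $L^1$-closedness of $\CC_M$ via Fatou) that the paper leaves implicit.
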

	
	\subsection{Regularity of fixed points}	
	
	\label{scn:regularity_fixed_point}
	
	\begin{lem}[Regularity of the steady state]
		\label{lem:regularity_steady_state}
		Under the assumptions of Lemma \ref{lem:properties_T}, and assuming furthermore $q \ge \frac{d}{2}$, as well as for some $N \ge 2$
		$$\forall n \le N, \quad \left\| | \nabla^n V |^{\frac{N}{n}} e^{-V} \right\|_{L^1 \cap L^\infty} \le R_V \, ,$$
		there exists some $C=C(N, \overline{\kappa}, \zeta, R_V)$ such that any fixed point $\rho_\star$ of $\TT$ satisfies the inductive estimate, for $1\leq n \leq N$,
		\begin{equation}
			\label{eq:inductive_Sobolev}
			\begin{split}
				\| \rho_\star \|_{W^{n, 1} } + \| \rho_\star \|_{W^{n, q} } \le & C \left( 1 + \| \rho_\star \|_{W^{n-1, 1} } +  \| \rho_\star \|_{W^{n-1, r} } \right)^n \, .
			\end{split}
		\end{equation}
	\end{lem}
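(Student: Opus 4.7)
The plan is to proceed by induction on $n$, with the base case $n=1$ being a direct consequence of \eqref{eq:upper_bound_grad_T} in Lemma~\ref{lem:properties_T} applied at $\rho_\star = \TT(\rho_\star)$.

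For the inductive step, I would differentiate the fixed-point identity $\rho_\star = e^{-V_\star}$, where $V_\star := V + \KK\rho_\star - \log\|\SS(\rho_\star)\|_{L^1}$, via Faà di Bruno's formula to obtain
\[
\partial^\alpha\rho_\star \;=\; \rho_\star \sum_{\pi} c_\pi \prod_{B\in\pi}\partial^B V_\star, \qquad |\alpha|=n,
\]
the sum being taken over partitions $\pi$ of $\alpha$ into $k\le n$ nonzero sub-multi-indices $B$ with $|B|=m_B$ and $\sum_B m_B = n$. Since $\KK$ commutes with differentiation and the additive constant disappears, $\partial^B V_\star = \partial^B V + \partial^B \KK\rho_\star$. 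Expanding the resulting product for each partition, the problem reduces to bounding in $L^1\cap L^q$ each monomial
\[
\rho_\star \prod_{j\in S}\partial^{B_j}V\; \prod_{j\notin S}\partial^{B_j}\KK\rho_\star,\qquad S\subseteq\{1,\ldots,k\}.
\]

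For the $V$-derivative factors (indices $j\in S$), I would invoke the pointwise concentration $\rho_\star\le e^\zeta e^{-V}$ from \eqref{eq:concentration_T} and distribute the weight $e^{-V}$ across factors via the splitting $e^{-V} = \prod_{j\in S}e^{-V m_j/M_S}$, with $M_S := \sum_{j\in S}m_j\le n\le N$. Hölder's inequality with exponents $M_S/m_j$ then reduces the estimate to $\prod_{j\in S}\|e^{-V}|\nabla^{m_j}V|^{M_S/m_j}\|_{L^s}^{m_j/M_S}$, each of which is finite by interpolation from the hypothesis $\||\nabla^{m_j}V|^{N/m_j}e^{-V}\|_{L^1\cap L^\infty}\le R_V$, using $M_S/m_j\le N/m_j$. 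For the $\KK\rho_\star$-derivative factors (indices $j\notin S$), I would use $\partial^{B_j}\KK\rho_\star = \nabla\KK\,\partial^{B_j - e_i}\rho_\star$ to obtain $\|\partial^{B_j}\KK\rho_\star\|_{L^q}\le \overline{\kappa}\|\rho_\star\|_{W^{m_j-1,1}\cap W^{m_j-1,r}}$; whenever the partition has $k\ge 2$ parts, we have $m_j\le n-1$, so a further Sobolev- or Morrey-type embedding (justified by the condition $q\ge d/2$ combined with enough derivatives provided by the inductive hypothesis) lifts this to an $L^\infty$ bound already controlled by the inductive hypothesis.

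The delicate case is $k=1$: either the monomial is $\rho_\star\cdot\nabla^n V$, which is controlled directly in $L^1\cap L^q$ by the weighted bound on $|\nabla^n V|^{N/n}e^{-V}$ (this is precisely where the restriction $n\le N$ is sharp), or it is $\rho_\star\cdot\partial^\alpha\KK\rho_\star$, in which case $\partial^\alpha\KK\rho_\star$ is kept in $L^q$ with bound $\overline{\kappa}\|\rho_\star\|_{W^{n-1,1}\cap W^{n-1,r}}$ and paired with $\rho_\star$ in $L^{q'}\cap L^\infty$ via \eqref{eq:upper_bound_T}. I expect the main obstacle to be the bookkeeping ensuring that the aggregate bound scales exactly like $(1+\|\rho_\star\|_{W^{n-1,1}\cap W^{n-1,r}})^n$: the extreme $n$-th power arises from the all-singleton partition, where each of the $n$ factors contributes a single copy of the inductive hypothesis, whereas partitions with fewer parts contribute fewer such copies compensated by the simpler boundedness of each individual factor. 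Summing over the finitely many partitions and subsets $S$ then yields the estimate \eqref{eq:inductive_Sobolev}.
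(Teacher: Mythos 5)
Your overall architecture (induction on $n$, Faà di Bruno applied to $\rho_\star = e^{-V_\star}$, splitting $\partial^B V_\star$ into the $V$ part and the $\KK\rho_\star$ part) matches the paper's, and your treatment of the $V$-factors and of the single-block monomials is sound. The genuine gap is in the monomials containing \emph{several} $\KK\rho_\star$-derivative factors. You propose to put all but one such factor in $L^\infty$ via "a Sobolev- or Morrey-type embedding justified by $q\ge d/2$", but this embedding is false at the stated level of generality: a factor $\partial^{B_j}\KK\rho_\star$ with $|B_j|=m_j$ only inherits $W^{n-m_j,q}$ regularity from the inductive hypothesis, and $W^{n-m_j,q}\subset L^\infty$ requires $(n-m_j)q>d$, which fails for instance already for the all-singleton partition at $n=2$ with $d=4$, $q=2$ (allowed by $q\ge d/2$): there the monomial is $\rho_\star\,|\nabla\KK\rho_\star|^2$ with each factor only in $W^{1,2}(\R^4)\not\subset L^\infty$. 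What is actually needed, and what the hypothesis $q\ge d/2$ is exactly calibrated for, is each factor in $L^{2q}$ (so that the square lands in $L^q$), via the borderline embedding $W^{1,q}\subset L^{2q}$, valid iff $q \ge d/2$.

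The paper avoids your difficulty by not keeping the multinomial structure at all: in a first step it proves the pointwise \emph{inequality} $|\nabla^n\rho_\star|\le C\rho_\star\sum_{k=1}^n|\nabla^kV_\star|^{n/k}$, using Young's inequality with exponents $\frac{k}{n}+\frac{n-k}{n}=1$ inside the Leibniz expansion to convert every cross-product of derivatives of different orders into a sum of \emph{pure powers} of a single derivative order. Each power $\|\,\rho_\star|\nabla^k\nabla\KK\rho_\star|^{n/(k+1)}\|_{L^1\cap L^q}$ is then handled by Hölder (placing $\rho_\star$ in $L^{q'}\cap L^\infty$) followed by Gagliardo--Nirenberg interpolation of $\|\nabla^k\nabla\KK\rho_\star\|_{L^{nq/(k+1)}}$ between the endpoints $\|\nabla\KK\rho_\star\|_{L^{nq}}$ and $\|\nabla^{n-1}\nabla\KK\rho_\star\|_{L^q}$, together with the embedding $W^{n-1,q}\subset L^{nq}$, which holds precisely because $nq\ge 2q\ge d$. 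To repair your argument you would either need to strengthen the hypothesis to $q>d$ (defeating the purpose of the lemma, which must cover the Coulomb case $q\le d$ in low regularity), or insert the Young-inequality symmetrization step before estimating, at which point you recover the paper's proof.
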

	
	\begin{proof}
		Let us consider $\rho_\star$ of the form $\rho_\star = e^{-V_\star}$. We first prove a Fa\`a di Bruno--type inequality, and then proceed to prove \eqref{eq:inductive_Sobolev}.
		
		\step{1}{A Fa\`a di Bruno--type inequality}
		Let us prove that there exists some $C = C(N)$ such that for any $n\in\{1,\dots,N\}$,  one has
		\begin{equation}
			\label{eq:FaaDiBruno}
			\left| \nabla_x^n \rho_\star \right| \le C \, \rho_\star \sum_{k=1}^n \left| \nabla_x^k V_\star \right|^{\frac nk} .
		\end{equation}
		This is true for $n = 1$, let us prove it by induction. Assume it holds for some $n \ge 1$ and consider $\alpha \in \N^d$ such that $| \alpha | = n$, there holds in virtue of Leibniz's formula for partial derivatives
		\begin{align*}
			\partial^{\alpha + \mathbf{e}_j } \rho_\star & = - \partial^{\alpha}(\partial^{\mathbf{e}_j }V_\star \rho_\star) = - \sum_{\beta \le \alpha} \binom{\alpha}{\beta} \left( \partial^{\beta + \mathbf{e}_j } V_\star \right) \left( \partial^{\alpha - \beta} \rho_\star \right)
		\end{align*}
		and thus, letting $k=|\beta| \in [0, n]$, there holds for some $C = C(N) > 0$
		\begin{align*}
			\left| \partial^{\alpha + \mathbf{e}_j } \rho_\star \right|
			& \le C \, \sum_{ k = 1 }^{n+1} \left( \rho_\star^{\frac{k}{n+1}} \left| \nabla^k V_\star \right| \right) \left( \rho_\star^{-\frac{k}{n+1}} \left| \nabla^{n+1-k} \rho_\star \right| \right) \, .
		\end{align*}
		Using Young's inequality with the exponents $\frac{k}{n+1} + \frac{n+1-k}{n+1} = 1$, and then re-indexing, one obtains, up to enlarging $C$
		\begin{align*}
			\left| \partial^{\alpha + \mathbf{e}_j } \rho_\star \right|
			& \le  C \, \rho_\star \left(\sum_{ k = 1 }^{n+1}  \left| \nabla^k V_\star \right|^{ \frac{n+1}{k} } + \sum_{ k = 1 }^{n}\left( \rho_\star^{-1} \left| \nabla^{k} \rho_\star \right| \right)^{ \frac{n+1}{k} } \right)
		\end{align*}
		Using the inductive assumption and Minkowski's inequality, we deduce
		\begin{align*}
			\left| \partial^{\alpha + \mathbf{e}_j } \rho_\star \right|
			& \le  C \, \rho_\star \sum_{ k = 1 }^{n+1} \left| \nabla^k V_\star \right|^{ \frac{n+1}{k} }
		\end{align*}
		Summing over $j = 1, \dots, d$, we deduce \eqref{eq:FaaDiBruno} at rank $n+1$.
		
		\step{2}{Inductive control}
		Since $\nabla V_\star = \nabla V + \nabla \KK (\rho_\star)$, the previous step yields the estimate
		\begin{align*}
			\| \nabla^n \rho_\star \|_{L^p} \le & C \sum_{k = 1}^n \left\| \rho_\star | \nabla^k V |^{\frac{n}{k}} \right\|_{L^p} + C \sum_{k = 0}^{n-1} \left\| \rho_\star | \nabla^k ( \nabla \KK (\rho_\star) ) |^{\frac{n}{k+1}} \right\|_{L^p} \, .
		\end{align*}
		The first term is bounded by $C = C(\zeta, R_V, N)$ because of \eqref{eq:concentration_T}, so we focus on the inductive term. Hölder's inequality yield
		\begin{align*}
			\left\| \rho_\star | \nabla^k ( \nabla \KK (\rho_\star)) |^{\frac{n}{k+1}} \right\|_{L^1 \cap L^q} \le & \| \rho_\star \|_{ L^{q'} \cap L^\infty } \left\| |\nabla^k ( \nabla \KK (\rho_\star)) |^{ \frac{n}{k+1} } \right\|_{L^q } \\
			= & {\| \rho_\star \|_{ L^{q'} \cap L^\infty } \left\|\nabla^k\nabla \KK (\rho_\star) \right\|_{L^{\frac{n q}{k+1} } }^{\frac{n}{k+1}}}
		\end{align*}
		One then interpolates for $k = 0, \dots, n-1$ each term between the end points $k=0$ and $k=n-1$ with $\theta = \frac{k}{n-1}$ using Gagliardo-Nirenberg's inequality :
		$${\left\|\nabla^k\nabla \KK (\rho_\star) \right\|_{L^{\frac{n q}{k+1} } }^{\frac{n}{k+1}}} \le C \left\| \nabla \KK ( \rho_\star )\right\|_{ L^{ nq } }^{1-\theta} {\left\| \nabla^{n-1}\nabla \KK (\rho_\star) \right\|_{ {L}^{q } }^{\theta}} \, ,$$
		and then, since $N \ge 2$ and $q \ge \frac{d}{2}$, the first term is controlled using the embedding $W^{n-1, q} \subset L^{qn}$:
		$$\left\| \nabla \KK  (\rho_\star) \right\|_{ {W}^{k,  \frac{nq}{k+1} } } \le C \left\| \nabla \KK (\rho_\star) \right\|_{ {W}^{n-1,  q } }.$$
		In conclusion, using \eqref{eq:upper_bound_T}, there holds
		$$\| \nabla^n \rho_\star \|_{L^1 \cap L^q} \le C + C \sum_{ k = 0 }^{n-1} \| \nabla \KK (\rho_\star) \|_{W^{n-1, q} }^{\frac{n}{k+1}} \, ,$$
		from which we deduce \eqref{eq:inductive_Sobolev} using the boundedness of $\nabla \KK$, this concludes the proof.		
	\end{proof}
	
	\begin{prop}[Regularity of the confinement perturbation]
		\label{prop:regularity_confinement_perturbation}
		Under the assumptions of Lemma \ref{lem:regularity_steady_state}, and assuming that $N > 1 + \frac{d}{q}$ and
		$$\| \KK \|_{L^1 \cap L^r \to L^p} \le \overline{\kappa} \quad \text{for some} \quad p \in [1, \infty] \, ,$$
		any fixed point $\rho_\star = e^{-V_\star}$ of $\TT$ satisfies for some $C = C (N, \overline{\kappa}, \zeta, R_V)$
		$$\| \rho_\star \|_{W^{N, 1} } + \| \rho_\star \|_{W^{N, q} } + \| V - V_\star \|_{W^{2, \infty} } \le C \, .$$
	\end{prop}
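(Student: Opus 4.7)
The plan is to split the proof into two stages. First, iterate the inductive estimate \eqref{eq:inductive_Sobolev} of Lemma \ref{lem:regularity_steady_state} to obtain $\|\rho_\star\|_{W^{N,1}} + \|\rho_\star\|_{W^{N,q}} \le C$; second, use the identity $V_\star - V = \KK\rho_\star + c$ together with the boundedness of $\KK$ and $\nabla\KK$ and a Sobolev embedding to conclude the $W^{2,\infty}$ control.

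For the first stage, Lemma \ref{lem:properties_T} furnishes the base bounds $\|\rho_\star\|_{L^1}=1$ and $\|\rho_\star\|_{L^\infty} \le e^{\zeta}$, which by interpolation give $\|\rho_\star\|_{L^r} \le C$. I would feed these bounds into \eqref{eq:inductive_Sobolev} at rank $n=1$, yielding $\|\rho_\star\|_{W^{1,1}} + \|\rho_\star\|_{W^{1,q}}\le C$, and iterate up to rank $n = N$. At each stage, the $W^{n-1,1}$ and $W^{n-1,r}$ norms on the right-hand side of \eqref{eq:inductive_Sobolev} are controlled by the previous iteration, thanks to the interpolation $L^1\cap L^q \hookrightarrow L^r$ valid since $r \le q$. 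This delivers $\|\rho_\star\|_{W^{N,1}} + \|\rho_\star\|_{W^{N,q}} \le C(N, \overline{\kappa}, \zeta, R_V)$.

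For the second stage, I would start from $V_\star - V = \KK\rho_\star + c$ with $c = \log \|\SS(\rho_\star)\|_{L^1}$ satisfying $|c| \le \zeta$ (by \eqref{eq:lower_bound_mass_S} together with $\KK\rho_\star \ge 0$, which forces $\|\SS(\rho_\star)\|_{L^1}\in[e^{-\zeta},1]$). The commutation $\nabla^k \KK\rho_\star = \nabla\KK(\nabla^{k-1}\rho_\star)$ combined with $\|\nabla\KK\|_{L^1\cap L^r \to L^q}\le\overline{\kappa}$ and the stage-one bounds gives $\|\nabla^k \KK\rho_\star\|_{L^q} \le C$ for every $k\in\{1,\dots,N+1\}$, i.e. $\nabla\KK\rho_\star\in W^{N,q}$; the new hypothesis $\|\KK\|_{L^1\cap L^r\to L^p}\le\overline{\kappa}$ gives $\|\KK\rho_\star\|_{L^p}\le\overline{\kappa}$. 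Since $N>1+d/q$, the Sobolev embeddings $W^{N,q}\hookrightarrow L^\infty$ and $W^{N-1,q}\hookrightarrow L^\infty$ both apply and yield $\|\nabla\KK\rho_\star\|_{L^\infty} + \|\nabla^2\KK\rho_\star\|_{L^\infty} \le C$; the $L^\infty$ bound on $\KK\rho_\star$ itself then follows from its $L^p$ control combined with its Lipschitz regularity via the elementary fact that a globally Lipschitz function on $\R^d$ with finite $L^p$ norm is automatically bounded. The delicate point I expect is that the commutation must be pushed all the way to order $k = N+1$ rather than stopping at $k = N$: this is legitimate because stage one provides $\nabla^N\rho_\star \in L^1\cap L^q\subset L^1\cap L^r$, and the resulting extra derivative on $\KK\rho_\star$ is precisely what makes the second-derivative embedding close under the sharp assumption $N>1+d/q$, as opposed to the naive $N>2+d/q$ that a bookkeeping at order $k=N$ would require.
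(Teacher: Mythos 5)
Your proposal is correct and follows essentially the same route as the paper: iterate the inductive estimate \eqref{eq:inductive_Sobolev} to get the $W^{N,1}\cap W^{N,q}$ bounds, use $\nabla(V_\star-V)=\nabla\KK(\rho_\star)$ and the mapping property of $\nabla\KK$ to place $\nabla(V_\star-V)$ in $W^{N,q}\hookrightarrow W^{1,\infty}$ under $N>1+d/q$, and recover the $L^\infty$ bound on $V_\star-V$ itself from its $L^p$ control plus its Lipschitz bound (the paper's local-averaging inequality is exactly the quantitative form of the "Lipschitz $+\,L^p\Rightarrow$ bounded" fact you invoke). Your explicit handling of the normalizing constant $c=\log\|\SS(\rho_\star)\|_{L^1}$ is a small point the paper glosses over, and your remark about needing the commutation up to order $N+1$ is precisely what the paper's bound $\|\nabla(V_\star-V)\|_{W^{N,q}}\le C_0$ encodes.
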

	
	\begin{proof}
		As a direct consequence of Lemma \ref{lem:regularity_steady_state}, there is some $C_0=C_0(N, \overline{\kappa}, \zeta, R_V)$ such that
		$$\| \rho_\star \|_{W^{N, 1} } + \| \rho_\star \|_{W^{N, q} } \le C_0$$
		and therefore, up to enlarging $C_0$, and noticing that $\nabla \KK(\rho_\star) = \nabla (V_\star - V)$
		$$\| \nabla (V_\star - V) \|_{W^{N, q}} \le C_0 \, ,$$
		and then by Sobolev embedding
		$$\| \nabla (V_\star - V) \|_{W^{1, \infty}} \le C_0 \, .$$
		Finally, noticing that for any smooth enough $u$ and $x \in \R^d$, there holds
		\begin{align*}
			|u(x)| & \le \frac{1}{\omega_d} \int_{ |x-y| \le 1 } |u(x)-u(y)| \, \d y + \frac{1}{\omega_d} \int_{ |x-y| \le 1 } |u(y)| \, \d y \\
			& \le \| \nabla u \|_{L^\infty} + \frac{1}{\omega_d^{1 / p}} \| u \|_{L^p} \, ,
		\end{align*}
		where $\omega_d$ is the volume of the unit ball, taking $u = V_\star - V = \KK(\rho_\star)$, this estimate guarantees that $V_\star-V \in L^\infty$. This concludes the proof.
	\end{proof}

	\subsection{Uniqueness of fixed points}	\label{scn:uniqueness}	
	{Let us introduce the macroscopic free energy functional (see \eqref{eq:freeenerg} for its kinetic counterpart)
		$$\forall \rho \in X, \quad \FF[\rho] := \int_{\R^d} \left( V + \frac{1}{2} \KK^\text{e}(\rho) \right) \rho \, \d x + \int_{\R^d} \rho \log \rho \d x $$
		defined in the convex set
		$$X := \left\{ \rho \in L^1 \cap L^2 \, : \, 0<\rho<R_Ve^\zeta, \, \int \rho \, \d x = 1 \right\}  \,,$$		
		where the upper bound is that of \eqref{eq:upper_bound_T} with $s=\infty$. Here we assume that $\KK^\text{e}:L^1 \cap L^2\to L^p$ with $p\in[2,\infty]$. Observe that, compared to the previous section (see Proposition~\ref{prop:regularity_confinement_perturbation}), the range of indices for $p$ has been shrinked and we take $r=2$. We also introduce the
		differential of $\FF$ with respect to $\rho$:
		$$\forall h \in Y, \quad \d_\rho \FF[\rho] \cdot h = \int_{\R^d} \left( V + \KK^\text{e}(\rho) + \log \rho \right) h \, \d x \, ,$$
		where we denoted the vector space
		$$Y = \left\{ h \in L^1 \cap L^2 \, : \, \int_{\R^d} h \, \d x = 0 \right\} \, .$$
		\begin{prop}[Uniqueness of the steady state]
			Under the assumptions of Lemma \ref{lem:properties_T}, the following properties hold.
			\begin{itemize}
				\item[\rm a)] If $\KK^\text{e}$ is such that $\FF$ is strictly convex, then $\TT$ has a unique fixed point.		 
				\item[\rm b)] If $\mathcal{K}^\text{o}= 0$, fixed points of $\TT$ coincide exactly with critical points of $\FF$ with mass $1$. In particular, under the hypothesis of {\rm a)}, the unique fixed point of $\TT$ is the unique minimizer of $\FF$ under the constraint of unit mass.
			\end{itemize}

		\end{prop}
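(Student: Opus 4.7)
\emph{Part (a).} The plan is to pair the fixed point equation with the strict monotonicity of $d\FF$. For any fixed point $\rho$ of $\TT$, taking the logarithm of $\rho = e^{-V-\KK\rho}/\|e^{-V-\KK\rho}\|_{L^1}$ produces the pointwise (a.e.) identity
\[
V + \KK \rho + \log \rho = c(\rho), \qquad c(\rho) := -\log \left\| e^{-V-\KK\rho} \right\|_{L^1}.
\]
Given two fixed points $\rho_1, \rho_2 \in X$, I would subtract the corresponding identities, test against $h := \rho_1-\rho_2 \in Y$ and integrate. The right-hand side contributes zero because $\int h = 0$, and the cross term involving the odd part vanishes, $\langle \KK^\text{o}h, h\rangle = 0$, since $(\KK^\text{o})^* = -\KK^\text{o}$. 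What remains reads exactly
\[
(d_{\rho_1} \FF - d_{\rho_2} \FF) \cdot (\rho_1 - \rho_2) = \langle \KK^\text{e} h , h \rangle + \int_{\R^d} (\log\rho_1 - \log\rho_2)(\rho_1 - \rho_2) \, \d x = 0.
\]
Strict convexity of $\FF$ on the convex set $X$ amounts to strict monotonicity of its differential, namely that the left-hand side is strictly positive whenever $\rho_1 \ne \rho_2$ (seen by applying strict convexity to $g(t) := \FF[(1-t)\rho_2 + t\rho_1]$ and comparing $g'(0)$ to $g'(1)$). This forces $\rho_1 = \rho_2$ and yields the uniqueness claim.

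\emph{Part (b).} When $\KK^\text{o} = 0$, one has $\KK = \KK^\text{e}$, so the differential reads $d_\rho\FF[\rho] \cdot h = \int ( V + \KK\rho + \log\rho ) h \, \d x$. A point $\rho \in X$ is a constrained critical point of $\FF$ (subject to $\int\rho = 1$) iff this functional vanishes on the tangent space $Y$, which by a standard duality argument is equivalent to $V + \KK\rho + \log\rho$ being constant a.e.; exponentiating and enforcing the unit mass normalization gives precisely $\rho = \TT(\rho)$. Conversely, the logarithmic identity above shows fixed points satisfy the critical point condition. Combining with (a) and the existence proposition of the previous subsection yields a unique fixed point, which is in particular a critical point of the strictly convex functional $\FF$ restricted to $X$, hence its unique minimizer.

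\emph{Main obstacle.} The delicate point is ensuring that every integral in the computation above is well-defined. Fixed points lie in $X$ and enjoy the concentration bound $\rho \le e^{\zeta - V}$ from \eqref{eq:concentration_T}; combined with \eqref{eq:integrability_V} this makes $V(\rho_1 - \rho_2)$ integrable. The terms involving $\KK\rho_i (\rho_1-\rho_2)$ are integrable by $L^p$--$L^{p'}$ duality together with $\rho_i \in L^1 \cap L^\infty \subset L^{p'}$. The potentially problematic term $\int \log \rho_i \, (\rho_1-\rho_2)\, \d x$ is handled through the logarithmic identity itself, which expresses $\log\rho_i$ as $-V - \KK\rho_i$ up to a constant, each piece having just been shown to pair integrably with $h$. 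This is the main reason for working with the fixed point equation in its logarithmic form rather than manipulating $\FF$ directly.
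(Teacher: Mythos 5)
Your proposal is correct and follows essentially the same route as the paper: both exploit the fixed-point equation in logarithmic form to show that $(\d_{\rho_1}\FF-\d_{\rho_2}\FF)\cdot(\rho_1-\rho_2)$ reduces to $-\la\KK^\text{o}h,h\ra=0$ by skew-symmetry, and then invoke the strict monotonicity of $\d\FF$ coming from strict convexity to conclude $\rho_1=\rho_2$, with part (b) read off from the expressions of $\d_\rho\FF$ and $\TT$. Your closing remarks on integrability are a welcome (if routine) addition that the paper leaves implicit.
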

		\begin{proof}
			One observes that any $\rho \in X$ of the form
			$$\rho = c \,  e^{-V - \KK \rho} \quad \text{for some} \quad c \in (0, \infty)$$
			is such that the first order differential of $\FF$ writes
			$$\forall h \in Y, \quad \d_\rho \FF[\rho] . h = - \int_{\R^d} \KK^\text{o}(\rho) h \, \d x \, ,$$
			and in particular, the skew-symmetry of $\KK^\text{o}$ implies
			\begin{equation}
				\label{eq:free_energy_fixed_point_identity}
				(\d_\rho \FF[\rho_1] - \d_\rho \FF[\rho_0]). (\rho_1 - \rho_0) = 0 \, .
			\end{equation}
			for any $\rho_0$ and $\rho_1$ of the previous form. Therefore, by strict convexity of $\FF$, $\rho_1= \rho_0$ which proves a). 		The property b) is an immediate consequence of the expressions of $\d_\rho \FF$ and $\TT$.
		\end{proof}
		
	}

	{
		\begin{cor}\label{cor:uniqueness} Under the assumptions of Proposition \ref{prop:regularity_confinement_perturbation} and if there is $\underline{\kappa}^\text{e}<\delta^\text{e}(\theta, R_V, \zeta)$ such that 
			$$\langle \KK^\text{e}h , h \rangle \ge - \underline{\kappa}^\text{e}\left(\theta\| h \|_{L^2}^2+(1-\theta)\| h \|_{L^1}^2\right),\text{ for all } h \in L^1 \cap L^2\text{ s.t. }\int h = 0.$$
			then the mapping $\TT$ admits a unique fixed point. 
		\end{cor}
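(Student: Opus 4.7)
My plan is to verify the strict convexity of the macroscopic free energy $\FF$ on $X$, after which part (a) of the preceding proposition yields the uniqueness of the fixed point of $\TT$ immediately.

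First I would compute the Hessian of $\FF$ at a generic $\rho\in X$ in a direction $h\in Y$. Differentiating once more the expression for $\d_\rho\FF[\rho]\cdot h$ obtained in the previous subsection, the linear term $\int V\rho\,\d x$ contributes nothing, the entropy $\int \rho\log\rho\,\d x$ contributes $\int h^2/\rho\,\d x$, and the quadratic piece $(1/2)\int \rho\,\KK^\text{e}(\rho)\,\d x$ contributes $\langle\KK^\text{e}h,h\rangle$. This gives
$$\d^2_\rho\FF[\rho][h,h]=\int_{\R^d}\frac{h^2}{\rho}\,\d x+\langle\KK^\text{e}h,h\rangle,\qquad h\in Y.$$

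Next I would extract two independent lower bounds on the entropy term from the membership $\rho\in X$, namely $0<\rho<R_Ve^\zeta$ and $\int\rho\,\d x=1$. The pointwise upper bound gives $\int h^2/\rho\,\d x\ge(R_Ve^\zeta)^{-1}\|h\|_{L^2}^2$, while Cauchy--Schwarz combined with the unit mass constraint gives
$$\|h\|_{L^1}^2=\left(\int_{\R^d}\frac{|h|}{\sqrt{\rho}}\sqrt{\rho}\,\d x\right)^2\le\int_{\R^d}\frac{h^2}{\rho}\,\d x.$$
Taking the convex combination of these bounds with weights $\theta$ and $1-\theta$ yields
$$\int_{\R^d}\frac{h^2}{\rho}\,\d x\ge\frac{\theta}{R_Ve^\zeta}\|h\|_{L^2}^2+(1-\theta)\|h\|_{L^1}^2,$$
which is exactly the form appearing in the assumed lower bound for $-\langle\KK^\text{e}h,h\rangle$. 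Inserting this into the Hessian formula then produces
$$\d^2_\rho\FF[\rho][h,h]\ge\theta\left(\frac{1}{R_Ve^\zeta}-\underline{\kappa}^\text{e}\right)\|h\|_{L^2}^2+(1-\theta)\bigl(1-\underline{\kappa}^\text{e}\bigr)\|h\|_{L^1}^2.$$

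Finally, choosing $\delta^\text{e}(\theta,R_V,\zeta):=\min\bigl(1,(R_Ve^\zeta)^{-1}\bigr)$ ensures that both coefficients above are strictly positive whenever $\underline{\kappa}^\text{e}<\delta^\text{e}$. The Hessian is therefore positive definite on $Y\setminus\{0\}$, $\FF$ is strictly convex on $X$, and part (a) of the preceding proposition concludes the proof. The only delicate point is the alignment of the two lower bounds on $\int h^2/\rho\,\d x$ with the precise $L^1$--$L^2$ interpolated form of the hypothesized coercivity on $\KK^\text{e}$; once this alignment is made through the common parameter $\theta$, the rest of the argument is a routine strict-convexity check.
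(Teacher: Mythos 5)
Your proof is correct and follows essentially the same route as the paper's: the paper establishes strict convexity by bounding the first-order difference $(\d_\rho\FF[\rho_1]-\d_\rho\FF[\rho_0])\cdot(\rho_1-\rho_0)$ from below, using the Csisz\'ar--Kullback--Pinsker inequality for the $\|\cdot\|_{L^1}^2$ term and the pointwise bound $\rho<R_Ve^{\zeta}$ (via the identity $(x-y)\log(x/y)=(x-y)^2\int_0^1\frac{\d t}{ty+(1-t)x}$) for the $\|\cdot\|_{L^2}^2$ term, which are exactly the finite-difference counterparts of your two lower bounds on $\int h^2/\rho$, combined with the same $\theta$-weighting against the coercivity hypothesis. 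The only difference is that you work at the level of the Hessian while the paper works with monotonicity of the gradient (which sidesteps any discussion of second differentiability of $\FF$ along segments of $X$); both arguments then conclude via part a) of the preceding proposition.
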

		
		\begin{proof}
			Let us show that under this additional hypothesis, $\FF$ is strictly convex. Indeed, one has
			\[(\d_\rho \FF[\rho_1] - \d_\rho \FF[\rho_0]). (\rho_1 - \rho_0) 
			= \la \KK^\text{e} \rho_1-\rho_0 , \rho_1-\rho_0 \ra_{L^2}  + \int (\rho_1-\rho_0)\log\left(\frac{\rho_1}{\rho_0}\right). \]
			On the one hand by the Csiszár-Kullback-Pinsker inequality, one has 
			\[
			\int (\rho_1-\rho_0)\log\left(\frac{\rho_1}{\rho_0}\right)\geq \|\rho_1-\rho_0\|_{L^1}^2
			\]			
			On the other hand, thanks to the upper bound on functions in $X$, one also has
			\[
			\int (\rho_1-\rho_0)\log\left(\frac{\rho_1}{\rho_0}\right) \geq \frac{1}{R_V e^\zeta}\|\rho_1-\rho_0\|_{L^2}^2
			\]
			where we used the identity
			$$\forall x, y > 0, \quad (x - y) \log \left( \frac{x}{y} \right) = (x-y)^2 \int_0^1 \frac{1}{ t y + (1-t) x } \d t \, . $$
			By combining these controls of the $L^1$ and $L^2$ distances with the assumption, for a small enough $\delta^\text{e}(\theta, R_V, \zeta)$, one shows
			\[
			(\d_\rho \FF[\rho_1] - \d_\rho \FF[\rho_0]). (\rho_1 - \rho_0)\geq0
			\]
			with equality only if $\rho_0=\rho_1$.
	\end{proof}}
	\begin{rem}
		If $\theta = 0$ then clearly $\delta^\text{e}=1$.
	\end{rem}
	\begin{rem}
		The lower bound assumption is not a technical one, when the interaction kernel has large negative modes one may observe phase transition and appearance of several equilibria. We refer to \cite{carrillo_2020_long} in the case of the McKean-Vlasov equation on the torus without confinement.
	\end{rem}
	
	\subsection{Functional inequalities related to the steady state}
	
	\label{scn:functional_inequalities_steady_state}
	
	These properties are an immediate consequence of Lemma \ref{lem:regularity_steady_state} and the Holley-Strook perturbation principle, and we just point out that \eqref{eq:Poincare_gV_rho_star} and \eqref{eq:Poincare_gV2_rho_star} follow from an integration by part argument.
	
	\begin{lem}[Uniform bounds]
		\label{lem:uniform_bounds_rho_star}
		Under the assumptions of Proposition \ref{prop:regularity_confinement_perturbation} and assuming that for any $\eps > 0$ there exists some $C_\eps > 0$ such that
		\begin{equation*}
			\left| \nabla^2 V \right| \le \eps | \nabla V | + C_{\eps} \, ,
		\end{equation*}
		then any fixed point $\rho_\star$ of $\TT$ is such that, for any $\eps > 0$, there exists a constant $C_{\star, \eps} = C_{\star, \eps} (N, \overline{\kappa}, \zeta, R_V, C_\eps) > 0$ such that
		\begin{equation}
			\label{eq:hess_grad_V_star}
			\left| \nabla^2 V_\star \right| \le \eps | \nabla V_\star | + C_{\star,\eps} \, .
		\end{equation}
		Furthermore, there exists some $C_\star > 0$ such that
		\begin{equation}
			\label{eq:moment_rho_star}
			\left\| \left( 1 + | \nabla V_\star|^2 \right) \rho_\star \right\|_{L^1 \cap L^\infty} \le C_\star \, .
		\end{equation}
	\end{lem}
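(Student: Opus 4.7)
The plan is to exploit the fact that, by Proposition~\ref{prop:regularity_confinement_perturbation}, the perturbation $W := V_\star - V = \KK(\rho_\star)$ lies in $W^{2,\infty}$ with norm controlled by some $C_0 = C_0(N,\overline{\kappa},\zeta,R_V)$. Thus all differences between derivatives of $V_\star$ and of $V$ (up to second order) are uniformly bounded, and the two estimates reduce to transferring the corresponding properties of $V$ to $V_\star$ up to constants depending on $C_0$.

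For the inequality \eqref{eq:hess_grad_V_star}, I would first write $\nabla^2 V_\star = \nabla^2 V + \nabla^2 W$ and $\nabla V = \nabla V_\star - \nabla W$, apply the hypothesis on $V$ with the same $\eps$, and then bound
\begin{equation*}
|\nabla^2 V_\star| \le |\nabla^2 V| + |\nabla^2 W| \le \eps\,|\nabla V| + C_\eps + \|W\|_{W^{2,\infty}} \le \eps\,|\nabla V_\star| + (\eps + 1) C_0 + C_\eps,
\end{equation*}
which gives \eqref{eq:hess_grad_V_star} with $C_{\star,\eps} := (\eps+1)C_0 + C_\eps$, depending on the stated parameters.

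For the moment bound \eqref{eq:moment_rho_star}, I would use the concentration estimate \eqref{eq:concentration_T} (with $s=\infty$), which together with $V_\star - V \in L^\infty$ yields the pointwise control $\rho_\star \le e^{\zeta}\,e^{-V}$, and also $\rho_\star \in L^1 \cap L^\infty$. Then expanding $|\nabla V_\star|^2 \le 2|\nabla V|^2 + 2|\nabla W|^2 \le 2|\nabla V|^2 + 2 C_0^2$, I would obtain
\begin{equation*}
(1+|\nabla V_\star|^2)\,\rho_\star \le 2 e^{\zeta}\,(1+|\nabla V|^2)\,e^{-V} + (1+2C_0^2)\,\rho_\star.
\end{equation*}
Both terms on the right are in $L^1 \cap L^\infty$: the first by Assumption~\ref{assu:confinement} (in the stronger $N=2$ version already assumed in Lemma~\ref{lem:regularity_steady_state}, which gives $|\nabla V|^2 e^{-V} \in L^1 \cap L^\infty$), the second by the uniform $L^1 \cap L^\infty$ bound on $\rho_\star$ from \eqref{eq:upper_bound_T}.

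There is essentially no obstacle: all the hard work has been done in Proposition~\ref{prop:regularity_confinement_perturbation}, which provides the crucial $W^{2,\infty}$ control on $V_\star - V$. The present lemma is purely a bookkeeping step packaging this information into the functional form needed later in the hypocoercivity arguments.
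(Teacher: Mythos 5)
Your proof is correct and follows exactly the route the paper intends: the paper gives no written proof, merely declaring the lemma an immediate consequence of the regularity results of Section~\ref{scn:regularity_fixed_point}, and your argument is precisely the bookkeeping they have in mind — transferring \eqref{eq:smoothness_V} and the moment bound \eqref{eq:integrability_V} from $V$ to $V_\star$ via the $W^{2,\infty}$ control on $V_\star - V$ from Proposition~\ref{prop:regularity_confinement_perturbation} and the concentration estimate \eqref{eq:concentration_T}.
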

	
	\begin{lem}[Poincaré inequalities]
		\label{lem:poincares}
		Under the assumptions of Lemma \ref{lem:uniform_bounds_rho_star} and assuming that for any
		$$u \in H^1\left( e^{-V} \right) \quad \textnormal{such that} \quad \int_{\R^d} u(x) e^{-V(x)} \d x = 0$$
		the Poincaré inequality holds:
		\begin{equation*}
			\| u \|_{L^2\left( e^{-V} \right)} \le C_P \| \nabla u \|_{L^2\left( e^{-V} \right) } \, ,
		\end{equation*}
		then, for any fixed point $\rho_\star = e^{-V_\star}$ of $\TT$, there exists some constant $C_\star = C_\star (N, \overline{\kappa}, \zeta, R_V, C_P) > 0$ satisfying for any
		$$u \in H^1(\rho_\star) \quad \textnormal{such that} \quad \int_{\R^d} u(x) \rho_\star(x) \d x = 0$$
		the Poincaré inequality:
		\begin{equation}
			\label{eq:Poincare_rho_star}
			\| u \|_{L^2(\rho_\star)} \le C_\star \| \nabla u \|_{L^2(\rho_\star)} \, ,
		\end{equation}
		as well as the following weighted variations:
		\begin{equation}
			\label{eq:Poincare_gV_rho_star}
			\| u \, |\nabla V_\star| \|_{L^2(\rho_\star)} \le C_\star \| \nabla u \|_{L^2(\rho_\star)} \, ,
		\end{equation}
		\begin{equation}
			\label{eq:Poincare_gV2_rho_star}
			\| u \, |\nabla V_\star|^2 \|_{L^2(\rho_\star)} \le C_\star \left( \| \nabla u \|_{L^2(\rho_\star)} + \left\| \nabla u \, |\nabla V_\star | \right\|_{L^2(\rho_\star)} \right) \, .
		\end{equation}
	\end{lem}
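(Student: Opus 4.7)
The plan is to deduce all three inequalities as perturbative consequences of the analogous inequalities for $e^{-V}$, leveraging the $W^{2,\infty}$ bound on $V_\star - V$ proved in Proposition~\ref{prop:regularity_confinement_perturbation} together with the smoothness estimate \eqref{eq:hess_grad_V_star}.

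\textbf{Step 1: Unweighted Poincaré via Holley-Stroock.} Proposition~\ref{prop:regularity_confinement_perturbation} gives $\|V_\star - V\|_{L^\infty} \le C$, and the fixed-point identity $\rho_\star = e^{-V_\star}$ together with the $L^1$-normalization \eqref{eq:upper_bound_T} ensures that the additive constant in $V_\star - V$ is controlled in terms of $\zeta$ and $R_V$. The Holley-Stroock perturbation principle then transfers \eqref{eq:Poincare_V} to $\rho_\star$ with a constant of the form $C_P \, e^{2\|V_\star - V\|_{L^\infty}}$, yielding \eqref{eq:Poincare_rho_star}.

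\textbf{Step 2: First weighted Poincaré \eqref{eq:Poincare_gV_rho_star}.} The computation is an integration by parts using the elementary identity $|\nabla V_\star|^2 \rho_\star = -\nabla V_\star \cdot \nabla \rho_\star$. Applied to $u^2 \rho_\star$, this gives
\begin{equation*}
\int |\nabla V_\star|^2 u^2 \rho_\star \, \d x = \int \Delta V_\star \, u^2 \rho_\star \, \d x + 2 \int u \, \nabla u \cdot \nabla V_\star \, \rho_\star \, \d x.
\end{equation*}
The cross term is split by Young's inequality into $\tfrac{1}{2}|\nabla V_\star|^2 u^2 + 2|\nabla u|^2$, and the Laplacian term is controlled by \eqref{eq:hess_grad_V_star}: $|\Delta V_\star| \le d(\eps|\nabla V_\star|+C_{\star,\eps})$, with $\eps|\nabla V_\star| u^2$ itself split by AM-GM into $\tfrac{\eps}{2}u^2 + \tfrac{\eps}{2}|\nabla V_\star|^2 u^2$. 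Choosing $\eps$ small enough, one absorbs all the $|\nabla V_\star|^2 u^2$ contributions into the left-hand side, leaving a bound of the form
\begin{equation*}
\int |\nabla V_\star|^2 u^2 \rho_\star \, \d x \le C \int u^2 \rho_\star \, \d x + C \int |\nabla u|^2 \rho_\star \, \d x,
\end{equation*}
and Step~1 then controls the $L^2(\rho_\star)$ norm of $u$ by its Dirichlet form.

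\textbf{Step 3: Second weighted Poincaré \eqref{eq:Poincare_gV2_rho_star}.} We iterate the same trick, now writing $|\nabla V_\star|^4 u^2 \rho_\star = -|\nabla V_\star|^2 u^2\, \nabla V_\star \cdot \nabla \rho_\star$ and integrating by parts. Three types of terms appear: (i) $\int \Delta V_\star |\nabla V_\star|^2 u^2 \rho_\star$, (ii) $\int \nabla^2 V_\star[\nabla V_\star, \nabla V_\star]\, u^2 \rho_\star$, and (iii) $2\int |\nabla V_\star|^2 u \, \nabla u \cdot \nabla V_\star \rho_\star$. Terms (i) and (ii) are each bounded pointwise by $|\nabla^2 V_\star||\nabla V_\star|^2 \le \eps|\nabla V_\star|^3 + C_{\star,\eps}|\nabla V_\star|^2$, and term (iii) is split by Young's inequality as $\tfrac{1}{2}|\nabla V_\star|^4 u^2 + 2 |\nabla V_\star|^2 |\nabla u|^2$. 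The cubic $|\nabla V_\star|^3 u^2$ and quadratic $|\nabla V_\star|^2 u^2$ remainders are in turn handled by $\eps$-Young estimates, allowing us to absorb a small multiple of $|\nabla V_\star|^4 u^2$ into the left-hand side and control the rest by $\int u^2 \rho_\star$, $\int |\nabla V_\star|^2 u^2 \rho_\star$, $\int |\nabla u|^2 \rho_\star$, and $\int |\nabla V_\star|^2 |\nabla u|^2 \rho_\star$. The first three of these have already been bounded in Steps~1 and 2 by the right-hand side of \eqref{eq:Poincare_gV2_rho_star}.

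The only genuine difficulty will be managing the absorption of the higher powers $|\nabla V_\star|^3$ and $|\nabla V_\star|^4$ in Step~3; this is exactly where the flexibility of the $\eps$ in \eqref{eq:hess_grad_V_star} (taken arbitrarily small) is essential, as a fixed ratio $|\nabla^2 V|/|\nabla V|$ would leave a non-absorbable term. Tracking the dependence of the resulting constants on $N$, $\overline\kappa$, $\zeta$, $R_V$, $C_P$, and the $C_\eps$ from Lemma~\ref{lem:uniform_bounds_rho_star} yields the announced $C_\star$.
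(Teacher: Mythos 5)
Your proposal is correct and follows exactly the route the paper indicates (the paper only states that the unweighted inequality follows from Holley--Stroock and the weighted ones from an integration by parts argument, without giving details): Step~1 is the Holley--Stroock transfer using $\|V_\star - V\|_{L^\infty}\le C$, and Steps~2--3 are the intended integration by parts against $\nabla\rho_\star = -\nabla V_\star\,\rho_\star$ with absorption via \eqref{eq:hess_grad_V_star}. The computations check out, so your write-up is a valid filling-in of the paper's one-line proof.
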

	
	\begin{lem}[Weighted Sobolev spaces comparison]
		Under the assumptions of Proposition \ref{lem:poincares} and for any fixed point $\rho_\star = e^{-V_\star}$ of $\TT$, there exists some $C=C(C_\star)$ such that
		\begin{equation}
			\label{eq:weighted_sobolev_space_comparison}
			\frac{1}{C} \| u \, \rho_\star^{1/2} \|_{H^s} \le \| u \|_{H^s(\rho_\star)}  \le C \| u \|_{H^s} \, , \quad s \in [0, 1] \, ,
		\end{equation}
		furthermore, denoting $\nabla^* := -\nabla + \nabla V_\star$, there also holds for $s = 1$
		\begin{equation}
			\label{eq:twisted_spatial_H1_equivalence}
			\frac{1}{C} \| u \|_{H^1(\rho_\star)} \le \| u \|_{L^2(\rho_\star)} + \| \nabla^* u \|_{L^2(\rho_\star)} \le C \| u \|_{H^1(\rho_\star)} \, .
		\end{equation}
	\end{lem}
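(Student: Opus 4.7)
The plan is to handle the endpoint cases $s = 0$ and $s = 1$ directly and then obtain the result for intermediate $s$ by interpolation, before turning to the twisted equivalence \eqref{eq:twisted_spatial_H1_equivalence} which follows from a carefully chosen integration by parts.

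The case $s = 0$ is immediate: by definition $\|u \rho_\star^{1/2}\|_{L^2} = \|u\|_{L^2(\rho_\star)}$, and $\|u\|_{L^2(\rho_\star)} \le \|\rho_\star\|_{L^\infty}^{1/2} \|u\|_{L^2}$ using the uniform bound $\rho_\star \in L^\infty$ from \eqref{eq:upper_bound_T}. For $s = 1$, the guiding identity is
\[
\nabla\bigl(u \rho_\star^{1/2}\bigr) = \rho_\star^{1/2}\bigl(\nabla u - \tfrac{1}{2} u \nabla V_\star\bigr),
\]
so that after expansion both inequalities at $s=1$ reduce to controlling the weighted term $\|u |\nabla V_\star|\|_{L^2(\rho_\star)}$. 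Decomposing $u = \bar u_\star + (u - \bar u_\star)$ with $\bar u_\star := \int u \rho_\star \d x$, the zero-mean fluctuation is handled by the weighted Poincaré inequality \eqref{eq:Poincare_gV_rho_star}, while the constant part is absorbed via the moment bound \eqref{eq:moment_rho_star} and Cauchy--Schwarz on $|\bar u_\star|$.

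For $s \in (0,1)$, I would view the identity $u \mapsto u$ as a bounded operator $H^s \to H^s(\rho_\star)$ at $s = 0$ and $s = 1$, and the multiplication $u \mapsto u \rho_\star^{1/2}$ as a bounded operator $H^s(\rho_\star) \to H^s$ at the same endpoints. Since $H^s(\rho_\star)$ is defined by interpolation (unambiguously, as noted in the Notations section), either the real or the complex interpolation method delivers the intermediate estimates.

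For \eqref{eq:twisted_spatial_H1_equivalence}, I would expand $\nabla^* u = -\nabla u + u \nabla V_\star$ and integrate by parts; the $|\nabla V_\star|^2$ cross-term cancels exactly, producing the pointwise identity
\[
\|\nabla^* u\|_{L^2(\rho_\star)}^2 = \|\nabla u\|_{L^2(\rho_\star)}^2 + \int u^2 \, \Delta V_\star \, \rho_\star \, \d x.
\]
The forward bound $\|u\|_{L^2(\rho_\star)} + \|\nabla^* u\|_{L^2(\rho_\star)} \le C\|u\|_{H^1(\rho_\star)}$ then follows directly from the Poincaré-and-moment argument already used at $s = 1$. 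The main obstacle is the reverse direction: $\Delta V_\star$ need not lie in $L^\infty$, so the remainder term must be tamed through the refined growth estimate \eqref{eq:hess_grad_V_star}, which gives $|\Delta V_\star| \lesssim \eps |\nabla V_\star| + C_{\star,\eps}$. Combining this with Cauchy--Schwarz and once more with \eqref{eq:Poincare_gV_rho_star} allows to dominate the remainder by $\eta\,\|\nabla u\|_{L^2(\rho_\star)}^2 + C_\eta\, \|u\|_{L^2(\rho_\star)}^2$ for arbitrarily small $\eta > 0$; choosing $\eta$ small enough to absorb the first summand into the left-hand side closes the estimate.
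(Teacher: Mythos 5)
Your proposal is correct and follows essentially the same route as the paper: endpoint estimates at $s=0,1$ via the moment bound \eqref{eq:moment_rho_star} and the weighted Poincaré inequality \eqref{eq:Poincare_gV_rho_star} followed by interpolation, and for \eqref{eq:twisted_spatial_H1_equivalence} the identity $\|\nabla^* u\|_{L^2(\rho_\star)}^2 = \|\nabla u\|_{L^2(\rho_\star)}^2 + \int u^2\,\Delta V_\star\,\rho_\star\,\d x$, which is exactly the paper's commutator relation $[\nabla,\nabla^*]=\Delta V_\star$ written out by integration by parts, with the remainder absorbed using \eqref{eq:hess_grad_V_star} and \eqref{eq:Poincare_gV_rho_star}. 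Your explicit mean--fluctuation decomposition to apply the zero-mean weighted Poincaré inequality is a detail the paper leaves implicit, but the argument is the same.
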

	
	\begin{proof}
		The bounds of \eqref{eq:weighted_sobolev_space_comparison} follow from \eqref{eq:moment_rho_star} and \eqref{eq:Poincare_gV_rho_star} for $s=0, 1$, which then holds for $s \in [0, 1]$ by interpolation. To prove \eqref{eq:twisted_spatial_H1_equivalence}, one writes
		\begin{align*}
			\| \nabla u \|^2_{L^2(\rho_\star)} & = \la \nabla u , \nabla u \ra_{L^2(\rho_\star)} \\
			& = \la \nabla^* u , \nabla^* u \ra_{L^2(\rho_\star)} + \left\la \left[ \nabla^* , \nabla \right] u , u \right\ra_{L^2(\rho_\star)} \, .
		\end{align*}
		Observing that $[\nabla, \nabla^* ] = \Delta V_\star$, we deduce from \eqref{eq:hess_grad_V_star} and \eqref{eq:Poincare_gV_rho_star} that the second term is in fact a lower order term:
		$$\left| \left\la \left[ \nabla^* , \nabla \right] u , u \right\ra_{L^2(\rho_\star)} \right| \le \frac{1}{2} \| \nabla u \|_{L^2(\rho_\star)}^2 + C \|u\|^2_{L^2(\rho_\star)}$$
		and therefore
		$$- C \| u \|^2_{L^2(\rho_\star)} + \frac{1}{2} \| \nabla u \|^2_{L^2(\rho_\star)} \le \| \nabla^* u \|_{L^2(\rho_\star)} \le C \| u \|^2_{L^2(\rho_\star)} + \frac{1}{2} \| \nabla u \|^2_{L^2(\rho_\star)} \, ,$$
		which allows to conclude.
	\end{proof}

	\subsection{Proof of Theorem~\ref{thm:steady} and \ref{thm:steady_state}}\label{scn:proofsteady}
	
	Let $F_\star$ solve \eqref{eq:SCVFP}. Then observe that $G_\star(x,v) = e^{-|v|^2-V(x)-\Psi_{F_\star}(x)}$ also solves $\eqref{eq:SCVFP}$. Then by multiplying the equation by $F_\star/G_\star$ one obtains \[\|\nabla_v(F_\star/G_\star)\|_{L^2(G_\star)}^2 = 0\] which shows that $F_\star(x,v) = \rho_\star(x) M(v)$. By plugging back this ansatz into \eqref{eq:SCVFP}, it shows that $\rho_\star$ solves \eqref{eq:stationary_equation}. The rest of the proof of Theorem~\ref{thm:steady} is a combination of the results of Proposition~\ref{lem:properties_T} to Corollary~\ref{cor:uniqueness}. For the proof of Theorem~\ref{thm:steady_state}, just observe that Assumption~\ref{assu:confinement} and Assumption~\ref{assu:interaction} implies all the hypotheses of Theorem~\ref{thm:steady_state}. In particular, since $L^p \cap \dot{W}^{1,q}$ embeds into $L^\infty$ for $q>d$, the assumption on $\KK^* \left(e^{-V}\right)$ is automatically satisfied and $\zeta = \zeta(\overline{\kappa}, R_V)$. Finally the functional inequalities of Section~\ref{scn:functional_inequalities_steady_state} hold and 
	we also point out that, all constants depending on $\overline{\kappa}$ can be considered to depend on the larger bound $\overline{\kappa}_{\text{max}}$.
	
	\section{Linearized VFP}
	\label{scn:linearized}
	
	In this section, we study decay and regularization properties of the linearized flow of the equation \eqref{eq:SCVFP} around the equilibrium $F_\star = \rho_\star M$ of Section~\ref{scn:steady_state}, where $M(v) = (2 \pi)^{-d/2} \exp\left( - |v|^2 / 2 \right)$. For that we introduce a perturbation $f$ satisfying
	\[
	f = \frac{F-F_\star}{F_\star}.
	\]
	The functional framework will rely on the Hilbert space $L^2(F_\star)$. In this space the adjoints of $\nabla_x$ and $\nabla_v$ are respectively
	$$\nabla_x^* = -\nabla_x + \nabla_x V_\star \quad \text{and} \quad \nabla_v^* = -\nabla_v + v \, .$$
	We introduce the subspace 
	\[
	\HH_0 = \left\{f\in L^2(F_\star)\text{ such that } \int_{\R^{2d}} f\,F_\star\,\d x\,\d v = 0\right\},
	\] and the orthogonal projection in $L^2(F_\star)$ onto local equilibrium
	\[
	\Pi f(x) = \int_{\R^d} f(x,v) M(v) \d v=  \frac{\rho_f(x)}{\rho_\star(x)} \quad\text{with}\quad\rho_f(x) = \int_{\R^d} f(x,v)\,F_\star(x,v)\d v.
	\]
	Observe that $\rho_f = \rho_{\Pi f}$. Let us write the linerized equation. By plugging $F = F_\star(1+f)$  into \eqref{eq:SCVFP}, one finds the linearized VFP equation
	\begin{equation}
		\label{eq:linearized_source}
		\begin{cases}
			( \partial_t + \Lambda ) f(t) + v \cdot \nabla_x \psi_f(t) = \nabla_v^* \varphi(t), \\
			\Lambda = \nu \nabla_v^* \nabla_v + v \cdot \nabla_x - \nabla_x V_\star \cdot \nabla_v\\
			\psi_f = \KK\rho_f\,, \\
			\displaystyle f(0, x, v) = f_\ini(x, v) \,,
		\end{cases}
	\end{equation}
	with $F_\ini = F_\star(1+f_\ini)$. In the original equation the right-hand side is given with $\varphi = f \nabla_x \psi_f$.	However, in this section, we will assume that it is a given source. In the following lemmas and propositions, we assume that the general hypotheses of the paper, Assumption~\ref{assu:confinement} and Assumption~\ref{assu:interaction}, are satisfied. We will write $C_\star$ for any positive constant depending only on the steady state and therefore on $\overline{\kappa}_{\text{max}}$, $R_V$ and $\theta$ only. Note that $- \Lambda$ generates a $\CC^0$--semigroup on  $\HH$ (see \cite[Proposition 5.5]{hellfer_2005_hypoelliptic}), and thus $- ( \Lambda + v \cdot \nabla_x \psi_{ (\cdot) } )$ as well since $v \cdot \nabla \psi_{(\cdot)}$ is a bounded operator in $\HH$ in virtue of Lemma \ref{lem:bounds_psi_weight}.
	
	\subsection{Preliminary properties}
	The following bounds are a translation of the assumption on $\KK$ to the current $L^2(F_\star)$ framework and are a consequence of Hölder's inequality. We use the notation $\psi^\alpha_f = \KK^\alpha\rho_f$ with $\alpha = \text{o}, \text{e}$.
	\begin{lem}
		\label{lem:bounds_psi_weight}
		The following upper bounds hold for $\alpha= e, o$
		\begin{gather}
			\label{eq:upper_bound_psi_weight}
			\| \psi_f^\alpha \|_{L^2(F_\star)} \le C_\star \overline{\kappa}^\alpha \| \Pi f \|_{L^2(F_\star)}, \\
			\label{eq:upper_bound_grad_psi_weight}
			\| \nabla_x \psi_f^\alpha \|_{L^2(F_\star)} + \| \nabla_x \psi_f^\alpha \|_{L^q} \le  C_\star  \overline{\kappa}^\alpha \| \Pi f \|_{L^2(F_\star)},
		\end{gather}
		as well as the lower bound:
		\begin{equation}
			\label{eq:lower_bound_psi}
			\la \psi_f , \rho_f \ra_{L^2} = \la \psi_f^\text{e}, \rho_f \ra_{L^2} \ge -  (C_\star\theta + (1-\theta))\underline{\kappa}^\text{e}\| \Pi f \|^2_{L^2(F_\star)} \, .
		\end{equation}
	\end{lem}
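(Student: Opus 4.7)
The plan is to first reduce everything to estimates on the macroscopic density $\rho_f$, then apply the continuity and coercivity assumptions on $\KK^\alpha$ from Assumption~\ref{assu:interaction}, and finally transfer the resulting Lebesgue estimates back to the weighted space $L^2(F_\star)$ using the uniform bounds on $\rho_\star$ furnished by Lemma~\ref{lem:uniform_bounds_rho_star}.

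The key preliminary observation is that $\rho_f(x) = \rho_\star(x)\,\Pi f(x)$, so since $\|\rho_\star\|_{L^1} = 1$ and (by Lemma~\ref{lem:uniform_bounds_rho_star}) $\|\rho_\star\|_{L^\infty} \le C_\star$, two applications of the Cauchy--Schwarz inequality in the measure $\rho_\star\,\d x$ yield the controls
\begin{equation*}
\|\rho_f\|_{L^1} \le \|\Pi f\|_{L^2(F_\star)}, \qquad \|\rho_f\|_{L^2}^2 \le \|\rho_\star\|_{L^\infty}\,\|\Pi f\|_{L^2(F_\star)}^2 \le C_\star \|\Pi f\|_{L^2(F_\star)}^2.
\end{equation*}
Both bounds follow from the identity $\|\Pi f\|_{L^2(F_\star)}^2 = \int |\Pi f|^2 \rho_\star\,\d x$.

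To obtain \eqref{eq:upper_bound_psi_weight} and \eqref{eq:upper_bound_grad_psi_weight}, I would start from the continuity estimates \eqref{eq:bounds_k} of Assumption~\ref{assu:interaction} giving $\|\psi_f^\alpha\|_{L^p} + \|\nabla_x \psi_f^\alpha\|_{L^q} \le \overline{\kappa}^\alpha \|\rho_f\|_{L^1\cap L^2}$, and then pass from the plain Lebesgue norms to the weighted ones by H\"older's inequality. Explicitly, for any $s \in [2,\infty]$,
\begin{equation*}
\|\psi_f^\alpha\|_{L^2(F_\star)}^2 = \int |\psi_f^\alpha|^2 \rho_\star\,\d x \le \|\psi_f^\alpha\|_{L^s}^2\,\|\rho_\star\|_{L^{s/(s-2)}},
\end{equation*}
(with the obvious interpretation when $s = \infty$), and the same argument applied to $\nabla_x\psi_f^\alpha$ with $s=q$ handles the gradient. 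Since Lemma~\ref{lem:uniform_bounds_rho_star} ensures $\rho_\star \in L^r$ for every $r \in [1,\infty]$ with norm bounded by $C_\star$, combining this with the preliminary step yields the desired upper bounds.

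For the lower bound \eqref{eq:lower_bound_psi}, the first equality is just skew-symmetry of the convolution against the odd part, $\langle \KK^{\text{o}} \rho_f, \rho_f \rangle_{L^2} = 0$, which reduces the problem to estimating $\langle \KK^{\text{e}} \rho_f, \rho_f \rangle$. Since $f \in \HH_0$ (the implicit functional setting), we have $\int \rho_f\,\d x = \int\int f\,F_\star\,\d v\,\d x = 0$, so the coercivity hypothesis \eqref{eq:lower_k_sym} applies and gives
\begin{equation*}
\langle \KK^{\text{e}} \rho_f, \rho_f \rangle \ge -\underline{\kappa}^{\text{e}}\bigl(\theta\,\|\rho_f\|_{L^2}^2 + (1-\theta)\,\|\rho_f\|_{L^1}^2\bigr).
\end{equation*}
Substituting the two preliminary bounds on $\|\rho_f\|_{L^1}$ and $\|\rho_f\|_{L^2}$ produces exactly the stated constant $(C_\star \theta + (1-\theta))\underline{\kappa}^{\text{e}}$. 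The only mild subtlety is checking that $\rho_f$ indeed has zero mean (relying on $f \in \HH_0$) and that the duality pairing defining $\langle \KK^{\text{e}} \rho_f, \rho_f \rangle$ makes sense, both of which follow from the $L^1 \cap L^2$ control of $\rho_f$ together with $\psi_f^{\text{e}} \in L^p$; otherwise there is no real obstacle and the proof is essentially bookkeeping.
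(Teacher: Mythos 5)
Your proposal is correct and matches the paper's (unwritten, one-line) argument exactly: the paper dispatches this lemma as ``a translation of the assumption on $\KK$ \ldots and a consequence of H\"older's inequality,'' which is precisely the reduction $\rho_f = \rho_\star\,\Pi f$, the $L^1\cap L^2$ control of $\rho_f$ by $\|\Pi f\|_{L^2(F_\star)}$ via $\|\rho_\star\|_{L^1\cap L^\infty}\le C_\star$, the continuity bounds \eqref{eq:bounds_k}, H\"older to return to $L^2(F_\star)$, and the coercivity hypothesis \eqref{eq:lower_k_sym} applied to the mean-zero density $\rho_f$. All steps, including the exact constant $(C_\star\theta+(1-\theta))\underline{\kappa}^{\text{e}}$, check out.
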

	
	The next lemma is a consequence of the Poincaré inequalities and other results of Section~\ref{scn:functional_inequalities_steady_state}.
	
	\begin{lem}
		Let $f\in \HH_0\subset  L^{2}(F_\star)$ and $g\in L^{2}(F_\star)$. Then there is $C = C(C_\star)$ such that
		\begin{align}
			\|\Pi f\|_{L^{2}(F_\star)}&\leq C\|\nabla_x f\|_{L^{2}(F_\star)},\label{eq:poincare_pi_zeroaverage}\\
			\|(\id-\Pi) g\|_{L^{2}(F_\star)}&\leq \|\nabla_v g\|_{L^{2}(F_\star)},\label{eq:poincare_idmoinspi}\\
			\|\Pi g\|_{L^{2}(F_\star)}\leq\|g\|_{L^{2}(F_\star)}&\leq C\left(\|\nabla_x g\|_{L^{2}(F_\star)} + \|\nabla_v g\|_{L^{2}(F_\star)}\right),\label{eq:poincare_pi}
		\end{align}
		Moreover
		\begin{equation}\label{eq:charac_norm_dvstar}
			\|\nabla_v^*g\|_{L^{2}(F_\star)}^2 = \|g\|_{L^{2}(F_\star)}^2 + \|\nabla_v g\|_{L^{2}(F_\star)}^2.
		\end{equation}
		Finally, for any $\eps>0$ there is $C_\eps = C_\eps(C_\star)$ such that
		\begin{equation}\label{eq:poincare_hessianVstar}
			\||\nabla^2V_\star|g\|_{L^{2}(F_\star)}\leq C_\eps\|g\|_{L^{2}(F_\star)} + \eps\|\nabla_xg\|_{L^{2}(F_\star)}
		\end{equation}
		
	\end{lem}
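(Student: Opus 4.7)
The plan is to exploit the tensor product structure $F_\star(x,v)=\rho_\star(x)M(v)$ in order to reduce each estimate to a one-variable Poincaré-type inequality. Any $g\in L^2(F_\star)$ splits orthogonally as $g=\Pi g+(\id-\Pi)g$: the first summand depends only on $x$, while the second has zero $v$-average against $M\,\d v$ at every $x$. With this decomposition, the five inequalities will reduce to either the Poincaré and weighted Poincaré inequalities for $\rho_\star$ proved in Section~\ref{scn:functional_inequalities_steady_state}, or to standard Gaussian Poincaré on the velocity Maxwellian $M$.

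For \eqref{eq:poincare_pi_zeroaverage}, since $\Pi f$ depends only on $x$ and $\int M\,\d v=1$, we have $\|\Pi f\|_{L^2(F_\star)}=\|\Pi f\|_{L^2(\rho_\star)}$, and the constraint $f\in\HH_0$ translates exactly into $\int \Pi f\,\rho_\star\,\d x=0$. I would then apply \eqref{eq:Poincare_rho_star} to $\Pi f$ and use the commutation $\nabla_x\Pi=\Pi\nabla_x$ together with the Jensen bound $|\nabla_x\Pi f(x)|^2\le\int|\nabla_x f(x,v)|^2M(v)\,\d v$ to conclude. Inequality \eqref{eq:poincare_idmoinspi} is the Gaussian Poincaré inequality (with optimal constant $1$) applied to $v\mapsto g(x,v)$ for each fixed $x$, integrated in $x$ against $\rho_\star\,\d x$. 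For \eqref{eq:poincare_pi}, Pythagoras yields $\|g\|_{L^2(F_\star)}^2=\|\Pi g\|_{L^2(F_\star)}^2+\|(\id-\Pi)g\|_{L^2(F_\star)}^2$, and under the implicit zero-mass assumption on $g$ (without which the bound fails, as $g\equiv 1$ shows) the two previous estimates give the result.

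The identity \eqref{eq:charac_norm_dvstar} follows directly from the commutator relation $[\partial_{v_i},\partial_{v_i}^*]=1$ for each component $i$, which is a consequence of $\partial_{v_i}^*=-\partial_{v_i}+v_i$ and $[v_i,\partial_{v_i}]=-1$: one writes $\|\partial_{v_i}^*g\|_{L^2(F_\star)}^2=\langle g,\partial_{v_i}\partial_{v_i}^*g\rangle_{L^2(F_\star)}=\langle g,\partial_{v_i}^*\partial_{v_i}g\rangle_{L^2(F_\star)}+\|g\|_{L^2(F_\star)}^2=\|\partial_{v_i}g\|_{L^2(F_\star)}^2+\|g\|_{L^2(F_\star)}^2$ and sums in $i$. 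Equivalently one may expand $|\nabla_v^*g|^2=|\nabla_v g|^2-v\cdot\nabla_v(g^2)+|v|^2g^2$ and integrate by parts using $\nabla_v M=-vM$.

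For \eqref{eq:poincare_hessianVstar}, the Hessian estimate \eqref{eq:hess_grad_V_star} reduces the claim to controlling $\||\nabla V_\star|\,g\|_{L^2(F_\star)}$ by $C_\varepsilon\|g\|_{L^2(F_\star)}+\varepsilon\|\nabla_x g\|_{L^2(F_\star)}$. Setting $h(x)^2:=\int g(x,v)^2M(v)\,\d v$, the weight is $x$-only so $\||\nabla V_\star|\,g\|_{L^2(F_\star)}=\||\nabla V_\star|\,h\|_{L^2(\rho_\star)}$. I would then use the integration-by-parts derivation underlying \eqref{eq:Poincare_gV_rho_star}, which (thanks to \eqref{eq:hess_grad_V_star} applied to $V_\star$) requires no zero-mean condition on $h$, to obtain $\||\nabla V_\star|\,h\|_{L^2(\rho_\star)}\lesssim\|\nabla_x h\|_{L^2(\rho_\star)}+\|h\|_{L^2(\rho_\star)}$. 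The right-hand side is then controlled by Cauchy--Schwarz, $|\nabla_x h|^2\le\int|\nabla_x g|^2M\,\d v$, and the identity $\|h\|_{L^2(\rho_\star)}=\|g\|_{L^2(F_\star)}$; a final rescaling $\varepsilon\leftarrow\varepsilon/C$ absorbs the prefactor on $\|\nabla_x g\|_{L^2(F_\star)}$ into the desired form. The only mildly delicate step is this last one, where one must invoke the Brascamp--Lieb-style weighted Poincaré that does \emph{not} require zero mean, since no mass condition is imposed on $g$ in \eqref{eq:poincare_hessianVstar}.
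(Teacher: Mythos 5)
Your proof is correct and follows exactly the route the paper intends: the paper gives no proof of this lemma, asserting only that it follows from the Poincaré inequalities of Section~\ref{scn:functional_inequalities_steady_state}, and your reduction to \eqref{eq:Poincare_rho_star}, \eqref{eq:Poincare_gV_rho_star}, \eqref{eq:hess_grad_V_star} and the Gaussian Poincaré inequality in $v$ is the intended one; your two flags — that the right-hand inequality of \eqref{eq:poincare_pi} requires the zero-mean hypothesis $g\in\HH_0$, and that \eqref{eq:poincare_hessianVstar} needs the version of the weighted Poincaré inequality without a mean-zero condition (obtained by the integration-by-parts argument or by subtracting the mean and using \eqref{eq:moment_rho_star}) — are both apt. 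One small mismatch: your commutator computation for \eqref{eq:charac_norm_dvstar}, summed over $i$, yields $\|\nabla_v^* g\|^2=\|\nabla_v g\|^2+d\|g\|^2$ for scalar $g$, not the stated identity; the constant is $1$ only when $\nabla_v^*$ acts on a gradient vector field (as in the paper's actual use with $g=\nabla_v f$), where the cross terms $\sum_{i,j}\langle\partial_{v_j}g_i,\partial_{v_i}g_j\rangle_{L^2(F_\star)}$ recombine into $\|\nabla_v g\|^2_{L^2(F_\star)}$ by symmetry of second derivatives. This is a defect of the statement rather than of your argument, and it is harmless downstream since the identity is only invoked as a two-sided norm comparison.
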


	\subsection{$L^2$-hypocoercivity}
	
	In the following we want to apply the $L^2$ hypocoercivity techniques of \cite{DMS}. Taking inspiration from \cite{ADLT}, we will introduce a suitable norm which will allow us to include some part the interaction term $v\cdot\nabla\psi_f$ into the dissipative mechanisms of the equation instead of a mere perturbation of the hypocoercive operator $\Lambda$. Accordingly, let us rewrite the equation \eqref{eq:linearized_source} as
	\begin{equation}
		\label{eq:PSCVFP_order_0}
		\begin{cases}
			( \partial_t + T - L ) f  = - v \cdot \nabla_x \psi^\text{o}_f + \nabla_v^* \varphi, \\
			f(0, x, v) = f_\ini(x, v),
		\end{cases}.
	\end{equation}
	where the operators $T$ and $L$ are defined as
	$$L := -\nu \nabla_v^* \nabla_v \quad \text{and} \quad T := v \cdot \nabla_x - \nabla_x V_\star \cdot \nabla_v + v \cdot \nabla_x \psi^\text{e}_f \, .$$

	\subsubsection{The linearized free energy norm}

	We endow the subspace  $\HH_0$ with a norm for which $L$ and $T$ will be shown to be respectively symmetric and skew-symmetric. Let us consider the twisted inner product
	\begin{equation}
		\label{eq:def_twisted_L2_product}
		\lla f, g \rra := \la f, g \ra_{L^2 \left( F_\star \right) } + \la \psi^\text{e}_f , \rho_g \ra_{L^2 } = \la f + \psi^\text{e}_f, g \ra_{L^2 \left( F_\star \right) },
	\end{equation}
	and the corresponding (squared) norm
	\begin{equation}
		\label{eq:def_twisted_L2_norm}
		\Nt{f}^2 := \lla f, f \rra = \iint_{\R^{2d}}  f(x,v)^2 F_\star(x,v)\d x\d v + \int_{\R^{d}} (\KK^\text{e}\rho_f)(x)\rho_f(x) \d x.\
	\end{equation}
	\begin{rem}
		In the case of a purely symmetric kernel $k=k^\text{e}$, observe that \eqref{eq:def_twisted_L2_norm} is related to the natural Lyapunov functional of the equation: the free energy \eqref{eq:freeenerg}. More precisely, if one plugs in $F = F_\star(1+\eps f)$ and lets $\eps \to0$, \eqref{eq:def_twisted_L2_norm} is the main order contribution. That is why we refer to it as the linearized free energy norm.   This observation is a generalization to arbitrary kernels of the one made in \cite{ADLT}. 
	\end{rem}	
	
	\subsubsection{The Dolbeault-Mouhot-Schmeiser functional $\EE_0$}
	For the new inner product \eqref{eq:def_twisted_L2_norm} we use the superscript $\dagger$ to denote the adjoint of an operator for the inner product \eqref{eq:def_twisted_L2_product}. Following \cite{DMS}, we introduce, for some $\eps \in (0, 1/2)$, the following functional:
	$$\EE_0[ f ] := \frac{1}{2} \Nt{f}^2 + \eps \lla A f, f \rra \quad \text{where} \quad A := \left(  \id + (T \Pi )^\dagger T \Pi \right)^{-1} \left( T \Pi \right)^\dagger \, .$$
	On the one hand, we will show that the functional $\EE_0$ satisfies a Lyapunov inequality, and on the other hand that both quantities $\EE_0^{\frac12}$ and $\Nt{\cdot}$ are equivalent to the norm of $L^2(F_\star)$.
	\begin{lem}
		\label{lem:Hilbert_space_structure}
		For a small enough $\delta^\text{e}=\delta^\text{e}(C_\star, \theta)>0$, assuming that $\underline{\kappa}^\text{e}< \delta^\text{e}$, one has for some $C = C(\overline{\kappa}^\text{e}, C_\star)$ that
		$$\frac{1}{2} \| f \|^2_{ L^2(F_\star) } \le \Nt{f}^2 \le C \| f \|^2_{ L^2(F_\star) } \, .$$
		The inner product is compatible with the micro--macro decomposition in the sense that
		$$\Pi^\dagger = \Pi \quad \text{and} \quad \ker(L)^{ \perp_{\HH_0} } = \ker(L)^{ \perp_{L^2(F_\star)} } ,$$
		and furthermore
		\begin{equation}
			\label{eq:twisted_inner_product_coincide_microscopic}
			\lla (\id - \Pi) f, g \rra = \la (\id - \Pi) f, g \ra_{ L^2(F_\star) }
		\end{equation}
		Finally, the diffusion and transport operators $L$ and $T$ are skew-symmetric and symmetric:
		$$T^\dagger = - T \quad \text{and} \quad L = L^\dagger.$$
	\end{lem}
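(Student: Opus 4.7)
The lemma collects four structural properties of the twisted inner product $\lla\cdot,\cdot\rra$. All of them reduce to two basic algebraic facts about the correction term $\la \psi^\text{e}_f,\rho_g\ra_{L^2}$: it only sees the macroscopic part (because $\rho_{\Pi f}=\rho_f$ and hence $\rho_{(\id-\Pi)f}=0$), and its quadratic form $\la \psi^\text{e}_f,\rho_f\ra_{L^2}$ is controlled above and below by $\|\Pi f\|_{L^2(F_\star)}^2$. The plan is to prove these facts first, and then derive the four claims in sequence.

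\textbf{Step 1: norm equivalence.} For the upper bound, use Cauchy--Schwarz together with \eqref{eq:upper_bound_psi_weight}:
\[
|\la\psi^\text{e}_f,\rho_f\ra_{L^2}|=|\la\psi^\text{e}_f,\Pi f\ra_{L^2(\rho_\star)}|\le \|\psi^\text{e}_f\|_{L^2(F_\star)}\|\Pi f\|_{L^2(F_\star)}\le C_\star\overline{\kappa}^\text{e}\|f\|_{L^2(F_\star)}^2,
\]
which yields $\Nt{f}^2\le (1+C_\star\overline{\kappa}^\text{e})\|f\|_{L^2(F_\star)}^2$. For the lower bound, note that $f\in\HH_0$ implies $\int\rho_f\,\d x=0$, so \eqref{eq:lower_bound_psi} applies and gives $\la\psi^\text{e}_f,\rho_f\ra_{L^2}\ge -(C_\star\theta+(1-\theta))\underline{\kappa}^\text{e}\|f\|_{L^2(F_\star)}^2$; choosing $\delta^\text{e}(C_\star,\theta)=(2(C_\star\theta+(1-\theta)))^{-1}$ yields $\Nt{f}^2\ge \tfrac12\|f\|_{L^2(F_\star)}^2$. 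This also shows that $\lla\cdot,\cdot\rra$ is an inner product (symmetry is immediate from symmetry of $\KK^\text{e}$).

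\textbf{Step 2: micro--macro compatibility.} Since $\Pi f(x)=\rho_f(x)/\rho_\star(x)$, a direct computation gives $\rho_{\Pi f}=\rho_f$ and therefore $\rho_{(\id-\Pi)f}=0$. The identity \eqref{eq:twisted_inner_product_coincide_microscopic} follows at once, since the correction term in $\lla(\id-\Pi)f,g\rra$ vanishes. The same observation yields $\Pi^\dagger=\Pi$: both $\lla\Pi f,g\rra$ and $\lla f,\Pi g\rra$ reduce to $\la\Pi f,\Pi g\ra_{L^2(F_\star)}+\la\psi^\text{e}_f,\rho_g\ra_{L^2}$, using once $\rho_{\Pi f}=\rho_f$ and the standard self-adjointness of $\Pi$ in $L^2(F_\star)$. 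For the kernel identity, observe that $\ker L=\Pi\HH_0$ in both norms (it depends only on the algebra, not the inner product). Using $\Pi^\dagger=\Pi$ and the norm equivalence, a vector $g\in\HH_0$ is $\lla\cdot,\cdot\rra$-orthogonal to $\ker L$ iff $\lla\Pi g,h\rra=0$ for all $h\in\Pi\HH_0$, which (taking $h=\Pi g$ and using coercivity) forces $\Pi g=0$, i.e.\ $g$ is $L^2(F_\star)$-orthogonal to $\ker L$.

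\textbf{Step 3: (skew-)symmetry of $L$ and $T$.} For $L=-\nu\nabla_v^*\nabla_v$, an integration by parts in $v$ against the constant function yields $\rho_{Lf}=0$, so $\lla Lf,g\rra=\la Lf,g\ra_{L^2(F_\star)}$ and self-adjointness transfers from $L^2(F_\star)$. For $T$, write $T=T_0+v\cdot\nabla_x\psi^\text{e}_{(\cdot)}$ with $T_0:=v\cdot\nabla_x-\nabla_xV_\star\cdot\nabla_v$, which is skew-adjoint in $L^2(F_\star)$. Introducing $j_f(x):=\int v\,f\,F_\star\,\d v$, integration by parts in $v$ gives the continuity identity $\rho_{T_0 f}=\nabla_x\cdot j_f$, while $\int v\,F_\star\,\d v=0$ kills the nonlocal part, so $\rho_{Tf}=\nabla_x\cdot j_f$. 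Using the commutation $[\KK^\text{e},\nabla_x]=0$ and symmetry of $\KK^\text{e}$ one computes
\[
\lla Tf,g\rra=-\la f,T_0 g\ra_{L^2(F_\star)}+\la\nabla_x\psi^\text{e}_f,j_g\ra_{L^2}-\la j_f,\nabla_x\psi^\text{e}_g\ra_{L^2}.
\]
Adding the symmetric expression $\lla f,Tg\rra=\lla Tg,f\rra$, the skew-adjointness of $T_0$ takes care of the first piece and the two cross terms pair off and cancel, giving $T^\dagger=-T$.

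\textbf{Expected obstacle.} None of the individual calculations is intricate; the delicate point is the design of $\lla\cdot,\cdot\rra$ itself. The exact correction $\la\psi^\text{e}_f,\rho_g\ra$ is precisely what is needed for the continuity identity $\rho_{Tf}=\nabla_x\cdot j_f$ together with the symmetry of $\KK^\text{e}$ to make the transport term $v\cdot\nabla_x\psi^\text{e}_f$ skew-adjoint rather than a perturbation. The smallness assumption on $\underline{\kappa}^\text{e}$ is unavoidable: if $\KK^\text{e}$ has large negative modes then $\lla\cdot,\cdot\rra$ ceases to be positive definite, and both the norm equivalence and the kernel identification break down simultaneously.
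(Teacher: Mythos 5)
Your proof is correct and follows essentially the same route as the paper, which derives the norm equivalence directly from \eqref{eq:upper_bound_psi_weight} and \eqref{eq:lower_bound_psi} and delegates the operator properties ($\Pi^\dagger=\Pi$, the kernel identity, and the (skew-)symmetry of $L$ and $T$) to the analogous computations in \cite[Lemmas 15 and 16]{ADLT}; your Steps 2--3 are precisely those computations, hinging on the observations $\rho_{(\id-\Pi)f}=0$, $\rho_{Lf}=0$, $\rho_{Tf}=\nabla_x\cdot j_f$ and the symmetry of $\KK^\text{e}$. No gaps.
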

	
	\begin{proof}
		This follows from Lemma~\ref{lem:bounds_psi_weight}, and more precisely \eqref{eq:upper_bound_psi_weight} and \eqref{eq:lower_bound_psi}. The properties of $T, L$ and $\Pi$ can be proved exactly as in \cite[Lemma 15 and 16]{ADLT}. 
	\end{proof}

	\subsubsection{Properties of $T$, $L$, $A$ and $\Pi$}
	We now prove some properties concerning the previously introduced operators. For properties that can be proved exactly as in \cite{ADLT} we sketch the proof at most. We only detail proofs when they differ substantially or when we concern additional properties not proved in \cite{ADLT}.
	
	\begin{lem}
		\label{lem:composition_rho_T}
		The operator $T$, as an operator of $\HH_0$, satisfies the parabolic dynamics property:
		$$\Pi T \Pi = 0 , $$
		and, more precisely, there holds for any $f \in \HH_0$
		$$T \Pi f = (\id - \Pi) T \Pi f = v \cdot \nabla_x ( \Pi f + \psi^\text{e}_f )$$
		as well as
		$$\Pi T f = \Pi T (\id - \Pi ) f = - \nabla_x^* \Pi (v f).$$
		Furthermore, assuming $\underline{\kappa}^\text{e}< \delta^\text{e}(C_\star,\theta)$, there is some $\lambda_M(C_\star,\theta)>0$ such that the following macroscopic coercivity estimate hold, namely
		\begin{equation}
			\label{eq:macroscopic_coercivity}
			\| T \Pi f \| \ge \lambda_M \| \Pi f \|.
		\end{equation}
	\end{lem}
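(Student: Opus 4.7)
The plan is to prove the two algebraic identities by direct computation (the second requires a single integration by parts in $v$), and then to reduce the macroscopic coercivity to the weighted Poincaré inequality \eqref{eq:Poincare_rho_star} by exploiting the fact that $u := \Pi f$ has zero mean against $\rho_\star$.

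Since $\Pi f$ is independent of $v$ and $\rho_{\Pi f} = \rho_f$ (so that $\psi^\text{e}_{\Pi f} = \psi^\text{e}_f$), the $\nabla_v$ term in $T$ kills $\Pi f$ and one obtains $T \Pi f = v \cdot \nabla_x(\Pi f + \psi^\text{e}_f)$ directly. Averaging against $M(v)$ then yields $\Pi T \Pi f = 0$ because $\int v_i M(v)\, \d v = 0$, which rephrases as $T \Pi f = (\id - \Pi) T \Pi f$. For the dual identity, I would compute $\Pi T f = \int (Tf)\, M\, \d v$ term by term: the interaction piece $v \cdot \nabla_x \psi^\text{e}_f$ averages to zero by the same oddness argument, the transport term produces $\nabla_x \cdot \Pi(vf)$, and integration by parts in $v$ combined with $\nabla_v M = -vM$ turns the force term $-\nabla_x V_\star \cdot \nabla_v f$ into $-\nabla_x V_\star \cdot \Pi(vf)$. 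Summing gives $\Pi T f = (\nabla_x - \nabla_x V_\star) \cdot \Pi(vf) = -\nabla_x^* \Pi(vf)$, and the equality with $\Pi T (\id - \Pi) f$ restates $\Pi T \Pi = 0$.

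For the coercivity inequality, set $u := \Pi f$ and $w := u + \psi^\text{e}_f$. Using $\int v_i v_j M(v)\, \d v = \delta_{ij}$ together with \eqref{eq:twisted_inner_product_coincide_microscopic} (applicable because $T\Pi f$ is microscopic), I obtain
\[
\Nt{T\Pi f}^2 = \|T\Pi f\|^2_{L^2(F_\star)} = \|\nabla_x w\|^2_{L^2(\rho_\star)},
\]
while the definition \eqref{eq:def_twisted_L2_norm} and the identity $\rho_f = u\rho_\star$ give $\Nt{\Pi f}^2 = \|u\|^2_{L^2(\rho_\star)} + \la \psi^\text{e}_f, \rho_f \ra_{L^2} = \la u, w \ra_{L^2(\rho_\star)}$. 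The target reduces to proving $\|\nabla_x w\|^2_{L^2(\rho_\star)} \ge \lambda_M^2 \la u, w \ra_{L^2(\rho_\star)}$. Now $\int u \rho_\star\, \d x = 0$ because $f \in \HH_0$, so with $\overline{w} := \int w \rho_\star\, \d x$ one has $\la u, w\ra_{L^2(\rho_\star)} = \la u, w - \overline{w}\ra_{L^2(\rho_\star)}$. Cauchy--Schwarz and the Poincaré inequality \eqref{eq:Poincare_rho_star} applied to the mean-zero function $w - \overline{w}$ then yield
\[
\la u, w \ra_{L^2(\rho_\star)} \le C_\star \|u\|_{L^2(\rho_\star)} \|\nabla_x w\|_{L^2(\rho_\star)},
\]
while \eqref{eq:lower_bound_psi} gives the matching lower bound $\la u, w\ra_{L^2(\rho_\star)} \ge \bigl(1 - (C_\star \theta + (1-\theta))\underline{\kappa}^\text{e}\bigr)\|u\|^2_{L^2(\rho_\star)} \ge c\, \|u\|^2_{L^2(\rho_\star)}$ as soon as $\underline{\kappa}^\text{e} < \delta^\text{e}(C_\star, \theta)$, for some $c = c(C_\star, \theta) > 0$. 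Eliminating $\|u\|_{L^2(\rho_\star)}$ between these two estimates produces $\la u, w\ra_{L^2(\rho_\star)} \le c^{-1} C_\star^2 \|\nabla_x w\|^2_{L^2(\rho_\star)}$, which is exactly the target with $\lambda_M^2 = c / C_\star^2$. The main subtlety I expect is the book-keeping of constants, namely choosing $\delta^\text{e}$ small enough that $c > 0$ holds simultaneously with the $L^2(F_\star) \leftrightarrow \Nt{\cdot}$ equivalence of Lemma~\ref{lem:Hilbert_space_structure}, while absorbing the implicit $\overline{\kappa}^\text{e}$--dependence into $C_\star$.
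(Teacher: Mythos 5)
Your proof is correct and follows essentially the same route as the paper's: the algebraic identities are the same direct computation (which the paper delegates to a lemma of the cited reference \cite{ADLT}), and the coercivity estimate rests on the same three ingredients — the explicit form $T\Pi f = v\cdot\nabla_x w$ with $w = \Pi f + \psi^\text{e}_f$, the Poincaré inequality \eqref{eq:Poincare_rho_star} applied to $w$, and the lower bound \eqref{eq:lower_bound_psi} under the smallness of $\underline{\kappa}^\text{e}$. The only cosmetic difference is that you eliminate $\|u\|_{L^2(\rho_\star)}$ between the Cauchy--Schwarz/Poincaré upper bound and the coercivity lower bound for $\la u, w\ra_{L^2(\rho_\star)} = \Nt{\Pi f}^2$, whereas the paper first deduces $\| T\Pi f\|^2_{L^2(F_\star)} \gtrsim \|\Pi f\|^2_{L^2(\rho_\star)}$ and closes via the norm equivalence of Lemma~\ref{lem:Hilbert_space_structure}; these are the same estimate.
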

	
	\begin{proof}
		Performing the same computations as \cite[Lemma 18]{ADLT} with the Poincaré inequality \eqref{eq:Poincare_rho_star}, one has
		$$\| T \Pi f \|^2_{ L^2(F_\star) } \ge \frac{ 1}{C_{\star}}  \left( \|  \Pi f \|^2_{L^2(\rho_\star)} + 2 \int_{\R^d} \psi^\text{e}_f(x) \rho_f(x)  \d x \right)$$
		thus, using the definition of $\Nt{\cdot}$ and the lower bound \eqref{eq:lower_bound_psi}, we have for some $C=C(\theta,C_\star)$
		\begin{equation*}
			\| T \Pi f \|_{L^2(F_\star)}^2 \ge \frac{1}{C_\star}  ( 1 -  \underline{\kappa}^\text{e}C) \| \Pi f \|_{L^2(\rho_\star)}^2,
		\end{equation*}
		which allows to conclude.
	\end{proof}
	
	This next proposition can be proved following  \cite[Lemma 20]{ADLT}.
	
	\begin{lem}
		\label{lem:A_bounded}
		The auxiliary operator $A$ is such that
		$$A = \Pi A (\id - \Pi) : \ker(L)^\perp \rightarrow \ker(L)$$
		and satisfies the following boundedness properties:
		$$ \Nt{ A L f } \le \frac{\nu}{ 2 } \Nt{f}, \qquad \Nt{T A f} \le \Nt{f} , \qquad \Nt{A f} \le \frac{1}{2} \Nt{f}.$$
	\end{lem}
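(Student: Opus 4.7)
The plan is to follow closely the approach of \cite[Lemma 20]{ADLT}, adapted to our twisted inner product $\lla \cdot, \cdot \rra$ for which $T^\dagger=-T$ and $L^\dagger=L$ by Lemma~\ref{lem:Hilbert_space_structure}. The key is to work with the defining identity for $h:=Af$, namely
\[
h + (T\Pi)^\dagger T\Pi\, h = (T\Pi)^\dagger f \, ,
\]
which follows from the very definition of $A$.

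For the structural property $A = \Pi A (\id - \Pi)$, I would argue on each side separately. Since Lemma~\ref{lem:composition_rho_T} gives $\Pi T \Pi = 0$, taking the adjoint yields $(T\Pi)^\dagger \Pi = 0$, which shows that $(T\Pi)^\dagger$, and thus $A$, vanishes on $\Pi\HH_0$; hence $A = A(\id-\Pi)$. Conversely, again using $\Pi T \Pi = 0$, we see that $(T\Pi)^\dagger = \Pi (T\Pi)^\dagger$ takes values in $\Pi \HH_0 = \ker(L)$, and the subspace $\ker(L)$ is preserved by $(T\Pi)^\dagger T\Pi$ (since $T\Pi$ depends on its argument only through its $\Pi$-component), hence by $(\id + (T\Pi)^\dagger T\Pi)^{-1}$ as well; so $A = \Pi A$.

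For the three norm estimates, I would test the defining identity against $h$ itself. Since $L$, $T$, and $\Pi$ have the symmetry properties of Lemma~\ref{lem:Hilbert_space_structure}, this gives
\[
\Nt{h}^2 + \Nt{T\Pi h}^2 \;=\; \lla h,\,(T\Pi)^\dagger f \rra \;=\; \lla T\Pi h,\,f\rra \, .
\]
In the first case ($\Nt{Af}\le \tfrac12 \Nt{f}$), Cauchy-Schwarz on the right yields $\alpha^2 + \beta^2 \le \beta\,\Nt{f}$ with $\alpha := \Nt{h}$, $\beta := \Nt{T\Pi h}$; requiring the discriminant in $\beta$ to be non-negative forces $\alpha \le \tfrac12 \Nt{f}$, and also $\beta \le \Nt{f}$. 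Since $Af \in \ker(L)=\Pi\HH_0$, we have $T\Pi A f = T A f$, and the second bound $\Nt{TAf}\le \Nt{f}$ follows. For the last estimate, I would replace $f$ by $Lf$ in the identity and use the self-adjointness of $L$ to move it onto $T\Pi h$:
\[
\Nt{h}^2 + \Nt{T\Pi h}^2 = \lla T\Pi h, Lf\rra = \lla L T\Pi h, f\rra \, .
\]
The crucial input is that $T\Pi h = v\cdot\nabla_x(\Pi h + \psi^\text{e}_h)$ is of the form $v\cdot g(x)$ and thus an eigenfunction of $L$ with eigenvalue $-\nu$ (since $\nabla_v(v\cdot g) = g$ and $\nabla_v^* g = v\cdot g$ when $g$ depends only on $x$). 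This identity $L T\Pi h = -\nu T\Pi h$ then reduces the estimate to $\alpha^2 + \beta^2 \le \nu \beta \Nt{f}$, whose discriminant condition gives $\Nt{ALf}=\alpha \le \tfrac{\nu}{2}\Nt{f}$.

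None of the steps should present substantial difficulty: the only point that deserves care is verifying that the spectral identity $LT\Pi h = -\nu T\Pi h$ is robust to the inclusion of the $\psi^\text{e}_h$ contribution in $T$ (it is, because the extra term still only modifies the $x$-dependent factor $g(x)$), and that the adjointness computations are performed with respect to $\lla \cdot, \cdot \rra$ rather than the standard $L^2(F_\star)$ inner product. Both are handled by the preliminary Lemma~\ref{lem:Hilbert_space_structure}, so the proof reduces to the quadratic inequality trick described above.
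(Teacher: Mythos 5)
Your proof is correct and follows essentially the same route as the paper, which simply defers to the standard Dolbeault--Mouhot--Schmeiser argument as adapted in \cite[Lemma 20]{ADLT}: the structural identity $A=\Pi A(\id-\Pi)$ from $\Pi T\Pi=0$, the quadratic inequality $\Nt{h}^2+\Nt{T\Pi h}^2=\lla T\Pi h,f\rra$ for the bounds on $A$ and $TA$, and the eigenvalue identity $LT\Pi=-\nu T\Pi$ for the bound on $AL$, all computed with respect to the twisted inner product. The details you supply (in particular the verification that the $\psi^{\text{e}}_h$ contribution keeps $T\Pi h$ of the form $v\cdot g(x)$) are exactly the adaptations the paper intends.
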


	\begin{lem}
		\label{lem:AT_bounded}
		The composition $A T$ satisfies for some $C = C( C_\star, \overline{\kappa}^\text{e})$ the bound
		$$\Nt{A T (\id - \Pi ) f} \le C \Nt{ (\id - \Pi) f }  \, .$$
	\end{lem}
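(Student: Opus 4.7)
I will follow the classical Dolbeault-Mouhot-Schmeiser strategy adapted to the twisted Hilbert structure of the paper, combining a duality argument with weighted elliptic regularity for the macroscopic component of the resolvent equation defining $A$. Set $g := AT(\id - \Pi)f \in \ker L$; by construction $g$ solves
\[
g + (T\Pi)^\dagger (T\Pi) g = (T\Pi)^\dagger T(\id - \Pi)f.
\]
Testing this directly against $g$ would produce the uncontrollable quantity $\Nt{T(\id - \Pi)f}$, so instead I bound $|\lla AT(\id-\Pi)f, v \rra|$ uniformly in $v \in \HH_0$ and specialize to $v = g$ at the end. For each such $v$, let $w := (I + (T\Pi)^\dagger T\Pi)^{-1}v$, which is well-defined since $(T\Pi)^\dagger T\Pi$ is non-negative and symmetric for $\lla\cdot,\cdot\rra$. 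Then $A^\dagger v = T\Pi w$, and using the skew-symmetry $T^\dagger = -T$ from Lemma~\ref{lem:Hilbert_space_structure},
\[
\lla AT(\id-\Pi)f, v \rra = \lla T(\id-\Pi)f, T\Pi w\rra = -\lla (\id-\Pi)f, T\cdot T\Pi w\rra.
\]

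Next I compute $T\cdot T\Pi w$ explicitly. By Lemma~\ref{lem:composition_rho_T}, $T\Pi w = v\cdot\nabla_x \phi_w$ with $\phi_w := \Pi w + \psi^\text{e}_w$. Since $\rho_{v\cdot \nabla_x\phi_w} = 0$ by parity in $v$, the non-local contribution inside $T$ vanishes upon application to $T\Pi w$, and a direct calculation yields
\[
T\cdot T\Pi w = v^\top \nabla_x^2 \phi_w\, v - \nabla_x V_\star \cdot \nabla_x \phi_w.
\]
The drift term does not depend on $v$, so it pairs to zero against $(\id-\Pi)f$ in $\lla\cdot,\cdot\rra$ (and the twist contribution in the inner product also vanishes because $\psi^\text{e}_{(\id-\Pi)f} = 0$ and $\rho_{T\Pi w}=0$). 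Applying Cauchy-Schwarz in $L^2(F_\star)$ and the Gaussian moment identity $\int v_i v_j v_k v_\ell M\,\d v = \delta_{ij}\delta_{k\ell} + \delta_{ik}\delta_{j\ell} + \delta_{i\ell}\delta_{jk}$,
\[
|\lla AT(\id-\Pi)f, v \rra| \leq C\, \Nt{(\id-\Pi)f}\, \|\nabla_x^2 \phi_w\|_{L^2(\rho_\star)}.
\]

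The final task is to bound $\|\nabla_x^2 \phi_w\|_{L^2(\rho_\star)} \leq C(C_\star,\overline{\kappa}^\text{e}) \Nt{v}$ by weighted elliptic regularity. Applying $\Pi$ to the defining equation of $w$ and observing that $\Pi (T\Pi)^\dagger T\Pi h$ acts on macroscopic $h$ exactly as the weighted operator $-\Delta_x \phi_h + \nabla_x V_\star \cdot \nabla_x\phi_h$ (using $\int v_iv_j M\,\d v = \delta_{ij}$), one sees that $u_w := \Pi w$ satisfies
\[
u_w + \big({-\Delta_x} + \nabla_x V_\star \cdot \nabla_x\big)\phi_w = \Pi v, \qquad \phi_w = u_w + \KK^\text{e}(\rho_\star u_w),
\]
in $L^2(\rho_\star)$. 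An $H^1(\rho_\star)$ estimate obtained by testing against $u_w\rho_\star$, followed by an $H^2(\rho_\star)$ estimate obtained by testing against the elliptic operator applied to $u_w$, closes the bound; this uses the weighted Poincar\'e inequalities \eqref{eq:Poincare_rho_star}--\eqref{eq:Poincare_gV2_rho_star}, the mild growth hypothesis \eqref{eq:hess_grad_V_star}, and the continuity of $\nabla \KK^\text{e}$ from \eqref{eq:upper_bound_grad_psi_weight} to treat $\KK^\text{e}(\rho_\star u_w)$ as a perturbation of $u_w$. Specializing to $v = g$ yields $\Nt{g}^2 = \lla g,g\rra \leq C\,\Nt{(\id-\Pi)f}\,\Nt{g}$, and dividing by $\Nt{g}$ gives the claim.

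\textbf{Main obstacle.} The technical heart is the last weighted elliptic regularity step. Because $\phi_w$ differs from $u_w$ by the non-local term $\KK^\text{e}(\rho_\star u_w)$, the equation is not a pure Witten-Laplacian problem. To absorb this perturbation one must exploit the $L^2(\rho_\star)$ continuity $\|\nabla \KK^\text{e}\rho\|_{L^2(\rho_\star)} \leq C_\star \overline{\kappa}^\text{e} \|\Pi (\rho/F_\star)\|_{L^2(F_\star)}$ from Lemma~\ref{lem:bounds_psi_weight}, together with the concentration bound $\rho_\star \leq R_V e^\zeta$ coming from Theorem~\ref{thm:steady}, to control the cross terms at both the $H^1$ and $H^2$ levels, which is precisely what produces the dependence of the constant $C$ on $\overline{\kappa}^\text{e}$.
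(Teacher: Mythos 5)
Your architecture is the same as the paper's: the bound is obtained by duality (equivalently, by bounding the adjoint $(AT(\id-\Pi))^\dagger$), the adjoint is computed explicitly using $T\Pi w = v\cdot\nabla_x\phi_w$ and the vanishing of $\rho_{v\cdot\nabla_x\phi_w}$, the drift term drops against the microscopic factor, and everything reduces to a weighted second-order elliptic estimate for the macroscopic resolvent equation $u_w + \nabla_x^*\nabla_x\phi_w = \Pi v$, $\phi_w = u_w + \psi^\text{e}_{u_w}$ (the paper's \eqref{eq:elliptic_w_g}). All of these reduction steps are correct.

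The gap is in the elliptic step, which is precisely where the paper concentrates its entire effort. Testing the equation against ``the elliptic operator applied to $u_w$'' controls at best $\|\nabla_x^*\nabla_x\phi_w\|_{L^2(\rho_\star)}$, which is not the quantity you need: to pass from $\|\nabla_x^*\nabla_x\phi_w\|_{L^2(\rho_\star)}$ to the full Hessian norm $\|\nabla_x^2\phi_w\|_{L^2(\rho_\star)}$ one must commute a derivative through $\nabla_x^*\nabla_x$, and the commutator $[\nabla_x,\nabla_x^*\nabla_x]$ produces the term $\la \nabla_x\phi_w, (\nabla_x^2V_\star)\nabla_x\phi_w\ra_{L^2(\rho_\star)}$, hence (via \eqref{eq:hess_grad_V_star}) the weighted gradient quantity $\|\,\nabla_x\phi_w\,|\nabla_xV_\star|\,\|_{L^2(\rho_\star)}$. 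Controlling that quantity is the actual crux --- it is the paper's estimate \eqref{eq:weight_g_H1_psig_Hd1} --- and it is \emph{not} obtained by testing against the elliptic operator, but by testing the equation against $u_w|\nabla_xV_\star|^2$ and running a five-term case analysis combining \eqref{eq:hess_grad_V_star}, H\"older with the $L^q$ bound \eqref{eq:upper_bound_grad_psi_weight} on $\nabla_x\psi^\text{e}$, the moment bound \eqref{eq:moment_rho_star}, and the weighted Poincar\'e inequalities \eqref{eq:Poincare_gV_rho_star}--\eqref{eq:Poincare_gV2_rho_star} applied to $u_w$ itself (which has zero mean; note that $\nabla_x\phi_w$ does not, so applying \eqref{eq:Poincare_gV_rho_star} directly to it and absorbing into $\|\nabla_x^2\phi_w\|$ would additionally require an a priori qualification that the Hessian norm is finite). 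Your proposal names the right tools but asserts rather than performs this step, and the one concrete mechanism it does specify does not by itself produce the weighted bound; you should add the separate test against $u_w|\nabla_xV_\star|^2$ and the accompanying estimates to close the argument.
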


	\begin{proof}
		In this proof, we denote by $C = C( C_\star, \overline{\kappa}^\text{e})$ a constant that may change from line to line. The proof follows the lines of the proof of \cite[Lemma 21]{ADLT}, so we focus on the most intricate estimate \eqref{eq:weight_g_H1_psig_Hd1}. Let us however point out that this boundedness result is proved by establishing it for the adjoint $ (AT (\id - \Pi))^\dagger$ and reducing the problem to
		\begin{equation*}
			\begin{split}
				\| ( A T & (\id - \Pi ) )^\dagger f \|_{L^2(F_\star)}^2 \\
				& \le C \left( \| g \|^2_{L^2(F_\star)}  + \| f \|_{L^2(F_\star)}^2
				+ \| \nabla_x w_g | \nabla_x V_\star | \|^2_{L^2(F_\star)} \right) \, ,
			\end{split}
		\end{equation*}
		where $g \in \ker(L)$ is characterized by the elliptic equation
		\begin{equation}
			\label{eq:elliptic_w_g}
			g + \nabla_x^* \nabla_x w_g = f \qquad \text{with} \qquad w_g := g + \psi^\text{e}_g \, .
		\end{equation}
		Taking the $L^2(F_\star)$--inner product against $g + \psi^\text{e}_g$, one can prove the control
		\begin{equation}
			\label{eq:g_H1_psig_Hd1}
			\Nt{g}^2 + \| \nabla_x g \|^2_{L^2(F_\star)} + \| \nabla_x w_g \|^2_{L^2(F_\star)} \le C \Nt{f}^2 \, .
		\end{equation}
		Let us explain how to establish the bound
		\begin{equation}
			\label{eq:weight_g_H1_psig_Hd1}
			\| \nabla_x w_g  | \nabla_x V_\star | \|^2_{L^2(F_\star)}  \le C \Nt{f}^2 \, .
		\end{equation}
		Taking the $L^2\left( F_\star \right)$-inner product of \eqref{eq:elliptic_w_g} with $g | \nabla_x V_\star |^2$,
		$$ \left\la \nabla_x g, \nabla_x \left( g | \nabla_x V_\star |^2 \right) \right\ra_{L^2(F_\star)} + \left\la \nabla_x \psi^\text{e}_g, \nabla_x \left( g | \nabla_x V_\star |^2 \right) \right\ra_{L^2(F_\star)} = \la g - f, g | \nabla_x V_\star |^2 \ra_{L^2(F_\star)},$$
		thus, using the Leibniz rule, we get
		\begin{align*}
			\Big\| \nabla_x g \left| \nabla_x V_\star \right| \Big\|^2_{ L^2(F_\star) }
			= & -  \left\la \nabla_x g , g \nabla_x \left( | \nabla_x V_\star |^2 \right) \right\ra_{ L^2(F_\star) } - \left\la \nabla_x \psi^\text{e}_g , g \nabla_x \left( | \nabla_x V_\star |^2 \right) \right\ra_{ L^2(F_\star) } \\
			&  - \left\la \nabla_x \psi^\text{e}_g , \nabla_x g | \nabla_x V_\star |^2 \right\ra_{ L^2(F_\star) } \\
			& + \Big\| g | \nabla_x V_\star | \Big\|^2_{ L^2(F_\star) } - \left\la f, g | \nabla_x V_\star |^2 \right\ra_{ L^2(F_\star) } =: \sum_{k = 1}^{5} X_k.
		\end{align*}
		The term $X_1$ is estimated using that $\nabla_x \left( | \nabla_x V_\star |^2 \right)  \le C | \nabla_x V_\star |^2 +C$ from  \eqref{eq:hess_grad_V_star}:
		\begin{align}
			\notag
			| X_1 |
			\le & C  \int_{ \R^d } | \nabla_x g | | g | \rho_\star \d x + C \int_{ \R^d } | \nabla_x g | | g | | \nabla_x V_\star |^2 \rho_\star \d x
		\end{align}
		and thus, for any $\delta \in (0, 1)$
		\begin{equation}
			\label{eq:est_X1}
			\begin{split}
				| X_1| \le C & \| g \|^2_{ L^2(F_\star) } + C \| \nabla_x g \|^2_{ L^2(F_\star) } \\
				& + \frac{C}{\delta} \| g | \nabla_x V_\star | \|^2_{ L^2(F_\star) } + \delta \Big\| \nabla_x g | \nabla_x V_\star | \Big\|^2_{ L^2(F_\star) }.
			\end{split}
		\end{equation}
		Similarly, the term $X_2$ is controlled using Hölder's inequality for $\frac{1}{2} = \frac{1}{q} + \frac{1}{s}$ followed by \eqref{eq:upper_bound_grad_psi_weight} and \eqref{eq:moment_rho_star}:
		\begin{align}
			\notag
			| X_2 |
			\le & C \int_{ \R^d } | \nabla_x \psi^\text{e}_g | | g | \rho_\star \d x + C \int_{ \R^d } | \nabla_x \psi^\text{e}_g | \times | g | | \nabla_x V_\star | \rho_\star^{\frac{1}{2}} \times | \nabla_x V_\star | \rho_\star^{\frac{1}{2}} \d x \\
			\notag
			\le & C \| \nabla_x \psi^\text{e}_g \|_{ L^2(F_\star) } \| g \|_{L^2(F_\star)} + C \| \nabla_x \psi^\text{e}_g \|_{L^q} \| g | \nabla_x V_\star | \|_{ L^2(F_\star) } \| | \nabla_x V_\star |^2 \rho_\star \|_{L^{s / 2}}^{\frac 12}  \\
			\label{eq:est_X2}
			\le & C \left( \| g \|^2_{ L^2(F_\star) } + \| g | \nabla_x V_\star | \|^2_{ L^2(F_\star) } \right).
		\end{align}
		The term $X_3$ is controlled similarly:
		\begin{align}
			\notag
			| X_3 | \le & \int_{ \R^d } | \nabla_x \psi^\text{e}_g | | \nabla_x g | \left| \nabla_x V_\star \right|^2 \rho_\star \d x  \\
			\notag
			\le & C \| \nabla_x \psi^\text{e}_g \|_{L^q} \| \nabla_x g | \nabla_x V_\star | \|_{ L^2(F_\star) } \| | \nabla_x V_\star |^2 \rho_\star \|_{L^{s / 2}} \\
			\label{eq:est_X3}
			\le & \frac{C}{\delta} \| g \|_{L^2(F_\star) }^2 + \delta \left\| \nabla_x g | \nabla_x V_\star | \right\|_{ L^2(F_\star) }.
		\end{align}
		The terms $X_4$ and $X_5$ are simply controlled as
		\begin{equation}
			\label{eq:est_X4_5}
			| X_4 | + | X_5 | \lesssim \Big\| g | \nabla_x V_\star | \Big\|^2_{ L^2(F_\star) } + \frac{1}{\delta} \| f \|^2_{ L^2(F_\star) } + \delta \Big\| g | \nabla_x V_\star |^2 \Big\|^2_{ L^2(F_\star) } .
		\end{equation}
		Putting together \eqref{eq:est_X1}--\eqref{eq:est_X4_5}, using the weighted Poincaré inequalities \eqref{eq:Poincare_gV_rho_star}--\eqref{eq:Poincare_gV2_rho_star} (recall that $g$ has zero mean), and using the bound \eqref{eq:g_H1_psig_Hd1} we have
		$$\Big\| \nabla_xg  | \nabla_x V_\star | \Big\|^2_{L^2(F_\star)}  \le C \Nt{f}^2 \, .$$
		Finally, as for \eqref{eq:est_X2}--\eqref{eq:est_X3}, one proves a similar weighted Sobolev estimate for $\psi_g$, which together with the bound above yields \eqref{eq:weight_g_H1_psig_Hd1}. This concludes the proof.
		
	\end{proof}
	
	\subsubsection{Properties of the functional $\EE_0$ }
	
	\begin{lem}
		\label{lem:Lyapunov_order_0}
		For $\eps \in (0, 1/2]$, the functional $\EE_0$ is equivalent to the norm $\Nt{\cdot}$ in the sense that
		$$\frac{1}{4} \Nt{f}^2 \le \EE_0[f] \le \frac{3}{4} \Nt{f}^2.$$
		Furthermore, under the smallness assumptions
		$$\underline{\kappa}^\text{e} < \delta^\text{e}(C_\star, \theta) \qquad \text{and} \qquad \overline{\kappa}^\text{o}< \delta^\text{o}(C_\star, \overline{\kappa}^\text{e}, \underline{\kappa}^\text{e}, \nu )$$
		for some appropriate choice of $\eps$, there exists $\lambda, \mu$ and $C$ depending on $C_\star, \overline{\kappa}^\text{e}, \underline{\kappa}^\text{e}, \overline{\kappa}^\text{o}$, $\theta$ and $\nu$ such that any solution to \eqref{eq:linearized_source} satisfies the differential inequality
		$$\frac{\d}{\d t} \EE_0[f] + \lambda \, \EE_0[f] + \mu \| \nabla_v f \|^2_{L^2(F_\star)} \le C \| (\id - \Pi ) \varphi \|_{L^2(F_\star)}^2 \, .$$
	\end{lem}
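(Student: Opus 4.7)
The equivalence of $\EE_0$ with $\Nt{\cdot}^2$ is immediate. Indeed, by Cauchy--Schwarz on the twisted inner product and the bound $\Nt{Af}\le \frac12\Nt{f}$ from Lemma~\ref{lem:A_bounded}, one has $|\lla Af, f\rra|\le \frac12\Nt{f}^2$, so for $\eps\in(0,1/2]$,
\[
\tfrac{1}{4}\Nt{f}^2 \le \tfrac12\Nt{f}^2 - \tfrac{\eps}{2}\Nt{f}^2 \le \EE_0[f] \le \tfrac12\Nt{f}^2 + \tfrac{\eps}{2}\Nt{f}^2 \le \tfrac{3}{4}\Nt{f}^2.
\]

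For the differential inequality, I will differentiate the two pieces of $\EE_0$ along \eqref{eq:PSCVFP_order_0}. Using that $L^\dagger=L$ and $T^\dagger=-T$ (Lemma~\ref{lem:Hilbert_space_structure}), that $\Pi L=0$ so $\lla Lf,f\rra=\la Lf,f\ra_{L^2(F_\star)}=-\nu\|\nabla_v f\|_{L^2(F_\star)}^2$, and that $\lla Tf,f\rra=0$, the first piece gives
\[
\tfrac12\tfrac{\d}{\d t}\Nt{f}^2 = -\nu\|\nabla_v f\|_{L^2(F_\star)}^2 - \lla v\cdot\nabla_x\psi_f^{\text{o}},f\rra + \lla\nabla_v^\star\varphi,f\rra.
\]
For the second piece I use $\frac{\d}{\d t}\lla Af,f\rra=\lla A\partial_t f,f\rra+\lla Af,\partial_t f\rra$ and expand $\partial_t f = (L-T)f - v\cdot\nabla_x\psi_f^{\text{o}} + \nabla_v^\star\varphi$. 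The key term is $-\eps\lla ATf,f\rra$: splitting $f=\Pi f+(\id-\Pi)f$, the $\Pi f$ part is handled via the spectral identity $AT\Pi=(I+B)^{-1}B$ with $B=(T\Pi)^\dagger T\Pi\ge \lambda_M^2$ on $\mathrm{Im}\,\Pi$ (Lemma~\ref{lem:composition_rho_T}), yielding
\[
-\eps\lla AT\Pi f,\Pi f\rra \le -\eps\frac{\lambda_M^2}{1+\lambda_M^2}\Nt{\Pi f}^2,
\]
while the $(\id-\Pi)f$ contribution is dominated by Lemma~\ref{lem:AT_bounded}. The cross terms $\lla TAf,f\rra$, $\lla ALf,f\rra$, $\lla Af,Lf\rra=\lla ALf,f\rra$ and $\lla Af,Tf\rra=-\lla TAf,f\rra$ are bounded in terms of $\Nt{f}\|\nabla_v f\|_{L^2(F_\star)}$ via Lemma~\ref{lem:A_bounded}.

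The odd-part and source contributions are the most delicate. Using \eqref{eq:upper_bound_grad_psi_weight}, one gets $\|v\cdot\nabla_x\psi_f^{\text{o}}\|_{L^2(F_\star)}\le C_\star\overline{\kappa}^{\text{o}}\Nt{\Pi f}$, so
\[
|\lla v\cdot\nabla_x\psi_f^{\text{o}},f\rra| \le C\overline{\kappa}^{\text{o}}\Nt{\Pi f}\,\Nt{f},
\]
and similarly the $\lla A(v\cdot\nabla_x\psi_f^{\text{o}}),f\rra$ contribution from the auxiliary piece is dominated by $C\overline{\kappa}^{\text{o}}\Nt{f}^2$ by Lemma~\ref{lem:A_bounded}. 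The source terms are handled by an integration by parts moving $\nabla_v^\star$ and using $\nabla_v\Pi=0$ together with $\Pi A=A$, producing only $\la(\id-\Pi)\varphi,\nabla_v f\ra_{L^2(F_\star)}$--type brackets which are absorbed via Young's inequality into $\nu\|\nabla_v f\|_{L^2(F_\star)}^2$ modulo a term $C\|(\id-\Pi)\varphi\|_{L^2(F_\star)}^2$.

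Collecting, for suitable $\eps\in(0,1/2]$ and provided $\overline{\kappa}^{\text{o}}\le\delta^{\text{o}}$ (chosen after $\nu$, $\underline{\kappa}^{\text{e}}$ and the constants of Lemmas~\ref{lem:A_bounded}--\ref{lem:AT_bounded}), one obtains
\[
\tfrac{\d}{\d t}\EE_0[f] + \tfrac{\nu}{2}\|\nabla_v f\|_{L^2(F_\star)}^2 + \eta\Nt{\Pi f}^2 \le C\|(\id-\Pi)\varphi\|_{L^2(F_\star)}^2
\]
for some $\eta>0$. Finally, the Poincaré inequality \eqref{eq:poincare_idmoinspi} gives $\|(\id-\Pi)f\|_{L^2(F_\star)}^2\le \|\nabla_v f\|_{L^2(F_\star)}^2$, so a convex combination $\Nt{f}^2\lesssim \Nt{\Pi f}^2+\|\nabla_v f\|_{L^2(F_\star)}^2$ holds, which combined with the norm equivalence converts the coercivity into the desired $\lambda\,\EE_0[f]+\mu\|\nabla_v f\|_{L^2(F_\star)}^2$ lower bound. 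The main obstacle is the tuning of constants: one must first fix $\eps$ small enough that the macroscopic coercivity $-\eps\lla AT\Pi f,\Pi f\rra$ beats all quadratic cross terms coming from $\lla ALf,f\rra$, $\lla TAf,f\rra$ and $\lla AT(\id-\Pi)f,f\rra$, and only then choose $\delta^{\text{o}}$ small enough that the asymmetric part $\overline{\kappa}^{\text{o}}$ can be treated as a perturbation.
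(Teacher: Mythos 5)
Your proof is correct and follows essentially the same route as the paper: the same norm-equivalence via Lemma~\ref{lem:A_bounded}, the same expansion of $\frac{\d}{\d t}\EE_0[f]$ with macroscopic coercivity extracted from $-\eps\lla AT\Pi f,\Pi f\rra$ by functional calculus, the $(\id-\Pi)f$ part controlled by Lemma~\ref{lem:AT_bounded} and the microscopic coercivity \eqref{eq:poincare_idmoinspi}, the odd part and source treated perturbatively, and the same order of tuning ($\eps$ first, then $\overline{\kappa}^{\text{o}}$). The only slip is the identity $\lla Af,Lf\rra=\lla ALf,f\rra$ --- in fact $\lla Af,Lf\rra=\lla LAf,f\rra=0$ since $Af\in\ker(L)$ and $L^\dagger=L$ --- but this is harmless since you bound that term anyway.
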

	
	\begin{proof}
		The equivalence of norms follows from the bound of $A$ provided by Lemma \ref{lem:A_bounded} which gives
		$$\frac{1 - \eps}{2} \Nt{f}^2 \le \EE_0[f] \le \frac{1 + \eps}{2} \Nt{f}^2.$$
		Differentiating $\EE_0\left[ f \right]$, and using that
		\begin{gather*}
			v \cdot \nabla_x \psi^\text{o}_f \in \ker(L)^\perp, \quad L f \in \ker(L)^\perp, \quad \text{and} \quad \nabla^*_v \varphi \in \ker(L)^\perp, \\
			T^\dagger = -T, \quad \text{and} \quad A f \in \ker(L)
		\end{gather*}
		one has the identity
		\begin{align*}
			\frac{\d}{\d t} \EE_0[f] = & \lla \partial_t f, f \rra + \eps \lla A \partial_t f, f \rra + \eps \lla A f, \partial_t f \rra \\
			= & \lla L f, f \rra +  \lla v \cdot \nabla_x \psi^\text{o}_f , f \rra \\
			& - \eps \lla A T f, f \rra + \eps \lla A L f, f \rra + \eps \lla A \left( v \cdot \nabla_x \psi^\text{o}_f  \right) , f \rra \\
			& - \eps \lla A f, T f \rra \\
			& + \lla f , \nabla_v^* \varphi \rra + \eps \lla A f , \nabla_v^* \varphi \rra =: \sum_{k = 1}^{8} X_k.
		\end{align*}
		The term $X_1$ is estimated using the microscopic coercivity estimate \eqref{eq:poincare_idmoinspi} and the fact that $\| \cdot \|_{L^2(F_\star)} = \Nt{ \cdot }$ on $\ker(L)^\perp$:
		$$X_1 \le - \frac{\nu}{2} \Nt{ (\id - \Pi) f }^2 - \frac{\nu}{2} \| \nabla_v f \|^2_{L^2(F_\star)},$$
		Noting that $(T \Pi)^\dagger T \Pi$ is a self-adjoint operator whose spectrum lies in $[\lambda_M, +\infty)$ by Lemma \ref{lem:composition_rho_T}, one has by functional calculus
		$$\lla A T \Pi f, \Pi f \rra \ge \frac{\lambda_M}{1 + \lambda_M} \Nt{\Pi f }^2,$$
		thus, the term $X_3$ is estimated for any $\delta \in (0, 1)$ as
		\begin{align*}
			X_3 = - \eps \lla A T f, \Pi f \rra & \le - \frac{\eps \lambda_M}{1 + \lambda_M} \Nt{ \Pi f }^2 - \eps \lla A T (\id - \Pi) f, \Pi f \rra \\
			& \le -  \eps \left( \frac{ \lambda_M}{ 1 + \lambda_M  } -\delta \right) \Nt{ \Pi f }^2 + \frac{\eps C}{\delta} \Nt{ (\id - \Pi) f }^2
		\end{align*}
		for some constant $C = C(\overline{\kappa}^\text{e}, C_\star )$ by Lemma \ref{lem:AT_bounded}. Similarly, the boundedness of $T A$ from Lemma \ref{lem:A_bounded} yields
		\begin{align*}
			X_6 & = \eps \lla T A (\id - \Pi) f, f \rra \\
			& \le \eps \Nt{ T A (\id - \Pi) f } \Nt{ \Pi f } + \eps \Nt{ T A (\id - \Pi) f } \Nt{| (\id - \Pi ) f } \\
			& \le \delta \eps \Nt{ \Pi f }^2 + \eps \left( 1 + \frac{1}{4 \delta} \right) \Nt{ (\id - \Pi) f }^2,
		\end{align*}
		and that of $AL$
		\begin{align*}
			X_4  = \eps \lla AL (\id - \Pi) f, \Pi f \rra & \le \eps \delta \Nt{ \Pi f }^2 + \frac{\eps}{4 \delta} \Nt{ AL (\id - \Pi) f }^2 \\
			& \le \eps \delta \Nt{ \Pi f }^2 + \frac{\eps \nu^2}{16 \delta} \Nt{ (\id - \Pi) f }^2.
		\end{align*}
		The remaining force terms are controlled using the fact that $\lla \cdot , \cdot\rra$ coincide with the~$L^2(F_\star)$--inner product on microscopic distributions, as well as the upper bound \eqref{eq:upper_bound_grad_psi_weight} on $\psi^\text{o}_f$ :
		\begin{align*}
			X_2
			= \la v \cdot \nabla_x \psi^\text{o}_f , (\id - \Pi) f \ra_{L^2(F_\star)}
			\le C \overline{\kappa}^\text{o}\left( \Nt{ \Pi f }^2+ \Nt{ (\id - \Pi) f }^2 \right)
		\end{align*}
		where we used the equivalence of norms in the last line,  and similarly, using that $A$ has operator norm $1/2$ from Lemma \ref{lem:A_bounded}
		\begin{align*}
			X_5 \le C \eps \overline{\kappa}^\text{o}\Nt{ \Pi f }^2 .
		\end{align*}
		Finally, since $\nabla_v^* \varphi$ is microscopic in the sense that $\Pi (\nabla_v^* \varphi) = 0$, one has in virtue of Lemma \ref{lem:A_bounded} that $X_8 = 0$ as well as
		\begin{align*}
			X_7 = \lla f , \nabla_v^* \varphi \rra = & \la (\id - \Pi) f , \nabla_v^* \varphi \ra_{L^2(F_\star) } \\
			& \le \frac{\nu}{4} \| \nabla_v f \|^2_{L^2(F_\star)} + \frac{1}{ \nu } \| (\id - \Pi) \varphi \|_{L^2(F_\star)}^2 \, .
		\end{align*}
		Put together, these estimates yield
		\begin{align*}
			\frac{\d}{\d t} \EE_0[f] \le & - \frac{ \nu }{4} \| \nabla_v f \|_{L^2(F_\star)}^2 \\
			& - \left\{ \frac{\nu}{2} - \frac{ \eps C }{4 \delta} - \eps \left( 1 + \frac{1}{4 \delta} \right) - \frac{\eps \nu^2}{8 \delta} - C \overline{\kappa}^\text{o}\right\} \Nt{ (\id - \Pi) f }^2 \\
			& - \left\{ \eps \left( \frac{\lambda_M}{1 + \lambda_M} - 3 \delta \right) - (1+\eps) C \overline{\kappa}^\text{o} \right\} \Nt{ \Pi f }^2 \\
			& + \frac{1}{\nu } \| (\id - \Pi ) \varphi \|_{L^2(F_\star)}^2 \, .
		\end{align*}
		Choosing $\eps = \delta^{2}$ and using the upper bound \eqref{eq:upper_bound_grad_psi_weight}, this simplifies for some $C = C( \overline{\kappa}^\text{e}, C_\star )$ as
		
		\begin{align*}
			\frac{\d}{\d t} \EE_0[f] \le & - \frac{ \nu }{4} \| \nabla_v f \|_{L^2(F_\star)}^2 \\
			& - \left\{ \frac{\nu}{2} - \delta C - 2 \delta - \delta \nu - C \overline{\kappa}^\text{o}\right\} \Nt{ (\id - \Pi) f }^2 \\
			& - \left\{ \delta^2 \left( \frac{\lambda_M}{1 + \lambda_M} - 3 \delta \right) - C \overline{\kappa}^\text{o} \right\} \Nt{ \Pi f }^2 \\
			& + \frac{1}{\nu } \| (\id - \Pi ) \varphi \|_{L^2(F_\star)}^2 \, .
		\end{align*}
		The result then holds taking $\delta$ small enough, depending on $C$, $\nu$ and $\lambda_M$, so that the dissipation rates of $(\id-\Pi) f$ and $\Pi f$ are of order $\nu$ and $\delta^2$ respectively, and then $\overline{\kappa}^\text{o}$ small enough, depending on $\nu$ and $\delta^2$.
	\end{proof}
	
	We can now state the main result of this section, which is an immediate consequence of Lemmas \ref{lem:Lyapunov_order_0} and \ref{lem:Hilbert_space_structure}.
	
	\begin{prop}[$L^2_{x, v}$-hypocoercivity]
		\label{prop:L2_linear_hypocoercivity}
		Under the smallness assumptions
		$$\underline{\kappa}^\text{e} < \delta^\text{e}(C_\star, \theta) \qquad \text{and} \qquad \overline{\kappa}^\text{o}< \delta^\text{o}(C_\star, \overline{\kappa}^\text{e}, \underline{\kappa}^\text{e}, \nu )$$
		there exists $\lambda$ and $C$ depending on $C_\star, \overline{\kappa}^\text{e}, \overline{\kappa}^\text{o}, \underline{\kappa}^\text{e}$ and $\nu$ such that any solution to \eqref{eq:linearized_source} satisfies the differential inequality
		\begin{align*}
			\sup_{t \ge 0} e^{2 \lambda t} \| f(t) \|_{L^2(F_\star)}^2 + & \int_0^\infty e^{2 \lambda t} \| \nabla_v f(t) \|_{L^2(F_\star)}^2 \d t \\
			& \le C \left( \| f_\ini \|_{L^2(F_\star)}^2 + \int_0^\infty e^{2 \lambda t} \|\varphi(t) \|_{L^2(F_\star)}^2 \d t \right) \, .
		\end{align*}
	\end{prop}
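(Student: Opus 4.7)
The plan is to derive the proposition by a direct Grönwall-type argument on the Lyapunov functional $\EE_0$, using the norm equivalences as a bridge to the statement in $L^2(F_\star)$. Since the smallness conditions $\underline{\kappa}^\text{e} < \delta^\text{e}$ and $\overline{\kappa}^\text{o} < \delta^\text{o}$ are precisely those of Lemma \ref{lem:Lyapunov_order_0}, I may invoke the differential inequality
\[
\frac{\d}{\d t}\EE_0[f(t)] + \lambda_0\, \EE_0[f(t)] + \mu \,\|\nabla_v f(t)\|^2_{L^2(F_\star)} \le C_0\, \|(\id-\Pi)\varphi(t)\|^2_{L^2(F_\star)}
\]
for some constants $\lambda_0,\mu,C_0$ depending on the allowed parameters.

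The second step is to multiply through by $e^{\lambda_0 t}$, producing
\[
\frac{\d}{\d t}\bigl(e^{\lambda_0 t}\EE_0[f(t)]\bigr) + \mu\, e^{\lambda_0 t}\|\nabla_v f(t)\|^2_{L^2(F_\star)} \le C_0\, e^{\lambda_0 t}\|(\id-\Pi)\varphi(t)\|^2_{L^2(F_\star)},
\]
and integrate on $[0,t]$. This yields simultaneous control of $e^{\lambda_0 t}\EE_0[f(t)]$ and $\int_0^t e^{\lambda_0 s}\|\nabla_v f(s)\|^2_{L^2(F_\star)}\d s$ by $\EE_0[f_\ini] + C_0 \int_0^t e^{\lambda_0 s}\|(\id-\Pi)\varphi(s)\|^2_{L^2(F_\star)}\d s$. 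Taking the supremum in $t$ in the first control and letting $t\to\infty$ in the second gives a single combined estimate, and the trivial bound $\|(\id-\Pi)\varphi\|_{L^2(F_\star)} \le \|\varphi\|_{L^2(F_\star)}$ brings us to the source term appearing in the proposition.

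The third step is to translate from $\EE_0$ to $\|\cdot\|_{L^2(F_\star)}$. By Lemma \ref{lem:Lyapunov_order_0} one has $\frac{1}{4}\Nt{f}^2 \le \EE_0[f] \le \frac{3}{4}\Nt{f}^2$, and by Lemma \ref{lem:Hilbert_space_structure} one has $\frac{1}{2}\|f\|_{L^2(F_\star)}^2 \le \Nt{f}^2 \le C\|f\|_{L^2(F_\star)}^2$. Composing these equivalences yields $\EE_0[f] \sim \|f\|_{L^2(F_\star)}^2$ with constants depending only on $C_\star, \overline{\kappa}^\text{e}, \theta$. Applying this equivalence both to $f(t)$ on the left and to $f_\ini$ on the right converts the estimate into the stated one, with the exponential rate $2\lambda := \lambda_0$ and a constant $C$ absorbing the various equivalence factors.

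There is no serious obstacle here — everything has been packaged into the two invoked lemmas. The only mild bookkeeping is to check that the rate in the exponential in the proposition (written as $e^{2\lambda t}$) matches the rate $\lambda_0$ from the Lyapunov inequality, which simply amounts to taking $\lambda := \lambda_0/2$, and that the dependencies of the constants ($C_\star, \overline{\kappa}^\text{e}, \overline{\kappa}^\text{o}, \underline{\kappa}^\text{e}, \nu$) are preserved through the norm equivalences, which is clear from the explicit dependencies recorded in Lemmas \ref{lem:Hilbert_space_structure} and \ref{lem:Lyapunov_order_0}.
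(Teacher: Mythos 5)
Your proposal is correct and follows exactly the route the paper intends: the paper states that Proposition \ref{prop:L2_linear_hypocoercivity} is an immediate consequence of Lemmas \ref{lem:Lyapunov_order_0} and \ref{lem:Hilbert_space_structure}, and your Grönwall integration plus the two norm equivalences (together with $\|(\id-\Pi)\varphi\|_{L^2(F_\star)}\le\|\varphi\|_{L^2(F_\star)}$) is precisely the omitted bookkeeping.
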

	
	From Proposition \ref{prop:L2_linear_hypocoercivity}, one can derive a well-posedness and asymptotic stability result for the nonlinear VFP equation \eqref{eq:SCVFP} if Assumption~\ref{assu:interaction} is strengthened by assuming that $\nabla\KK:L^1\cap L^2\to L^\infty$ is a bounded operator, namely $q=\infty$. For kernels providing less integrability/regularity, one need to resort to higher order estimates and regularization properties of the equation, which is the purpose of the rest of Section~\ref{scn:linearized}.
	
	\subsection{Derivatives estimates}	
	In this section, we derive preliminary estimates on the derivatives of $f$.
	
	\begin{lem}
		Let $f$ be a solution to \eqref{eq:linearized_source}, its velocity gradient satisfies the equation
		\begin{equation*}
			\partial_t (\nabla_v f) + \Lambda ( \nabla_v f ) + \nu \nabla_v f = - \nabla_x f - \nabla_x \psi_f + \nabla_v \nabla_v^* \varphi  \, .
		\end{equation*}
		as well as the differential inequality
		\begin{equation}
			\label{eq:dissipation_grad_v}
			\begin{split}
				\frac{1}{2} & \frac{\d}{\d t} \| \nabla_v f \|^2_{ L^2(F_\star) }  + \frac{\nu}{2} \| \nabla_v^2 f \|^2_{ L^2(F_\star) } \\
				& \le
				\delta \| \nabla_x f \|^2_{L^2(F_\star) } + C \left( \delta^{-1} \| \nabla_v f \|_{ L^2(F_\star) }^2
				+ \|\varphi \|_{L^2(F_\star)}^2 + \| \nabla_v \varphi \|_{L^2(F_\star)}^2  \right)
			\end{split}
		\end{equation}
		where $\delta \in (0, 1)$, and for some $C = C(\overline{\kappa}^\text{e}, \overline{\kappa}^\text{o}, C_\star, \nu)$.
	\end{lem}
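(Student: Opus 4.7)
My plan is first to derive the claimed evolution equation by a commutator computation, then to test it in $L^2(F_\star)$, and finally to bound the resulting source terms, with the main technical point being the careful treatment of the term involving $\nabla_v\nabla_v^*\varphi$.

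For the derivation, I apply $\nabla_v$ to \eqref{eq:linearized_source} and compute $[\nabla_v, \Lambda]$. Using $[\nabla_v, v\cdot\nabla_x] = \nabla_x$, $[\nabla_v, \nabla_x V_\star \cdot \nabla_v] = 0$ (since $\nabla_x V_\star$ does not depend on $v$), and $[\nabla_v, \nabla_v^*\nabla_v] = \nabla_v$ (which follows from $\nabla_v^*\nabla_v = -\Delta_v + v \cdot \nabla_v$), I obtain $[\nabla_v, \Lambda] = \nabla_x + \nu\nabla_v$. Since $\psi_f$ depends only on $x$, $\nabla_v(v\cdot\nabla_x\psi_f) = \nabla_x\psi_f$, and the stated equation for $\nabla_v f$ follows.

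Testing against $\nabla_v f$ in $L^2(F_\star)$, the skew-symmetry of the transport part of $\Lambda$ and the identity $\la\nu\nabla_v^*\nabla_v g, g\ra_{L^2(F_\star)} = \nu\|\nabla_v g\|^2_{L^2(F_\star)}$ applied componentwise yield
\[
\tfrac{1}{2}\tfrac{\d}{\d t}\|\nabla_v f\|_{L^2(F_\star)}^2 + \nu\|\nabla_v^2 f\|_{L^2(F_\star)}^2 + \nu\|\nabla_v f\|_{L^2(F_\star)}^2 = R_1 + R_2 + R_3,
\]
where $R_1 = -\la\nabla_x f, \nabla_v f\ra$, $R_2 = -\la\nabla_x\psi_f, \nabla_v f\ra$, and $R_3 = \la\nabla_v\nabla_v^*\varphi, \nabla_v f\ra$ (all inner products in $L^2(F_\star)$). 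Young's inequality gives $|R_1|\leq \delta\|\nabla_x f\|^2 + (4\delta)^{-1}\|\nabla_v f\|^2$, and combining the regularity bound \eqref{eq:upper_bound_grad_psi_weight} on $\nabla_x\psi_f$ with the Poincaré estimate \eqref{eq:poincare_pi} and a further Young produces $|R_2| \leq \delta\|\nabla_x f\|^2 + C_\delta\|\nabla_v f\|^2$.

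The delicate term is $R_3$, which naively would require two velocity derivatives of $\varphi$. I would integrate by parts once to transfer a derivative, writing $R_3 = \la\nabla_v^*\varphi, \nabla_v^*(\nabla_v f)\ra_{L^2(F_\star)}$, then apply the identity \eqref{eq:charac_norm_dvstar} componentwise to get $\|\nabla_v^*\varphi\|^2 \leq C(\|\varphi\|^2 + \|\nabla_v\varphi\|^2)$ and $\|\nabla_v^*(\nabla_v f)\|^2 \leq C(\|\nabla_v f\|^2 + \|\nabla_v^2 f\|^2)$. A Young inequality with small parameter $\eta$ yields $|R_3| \leq \eta\|\nabla_v^2 f\|^2 + \eta\|\nabla_v f\|^2 + C_\eta(\|\varphi\|^2 + \|\nabla_v\varphi\|^2)$. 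Choosing $\eta$ small enough depending on $\nu$ absorbs the Hessian term into the $\nu\|\nabla_v^2 f\|^2$ dissipation, leaving $\frac{\nu}{2}\|\nabla_v^2 f\|^2$ on the left; dropping the nonnegative $\nu\|\nabla_v f\|^2$ then produces the claimed inequality. The main obstacle is precisely this absorption in $R_3$: it is the hypoelliptic Hessian dissipation that allows us to keep the regularity requirement on $\varphi$ at first order in $v$, consistent with the functional setting of the rest of Section~\ref{scn:linearized}.
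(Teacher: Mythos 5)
Your proposal is correct and follows essentially the same route as the paper: the commutator identity $[\nabla_v,\Lambda]=\nabla_x+\nu\nabla_v$, testing against $\nabla_v f$, bounding $\nabla_x\psi_f$ via \eqref{eq:upper_bound_grad_psi_weight} together with a Poincaré inequality, and handling the source by one integration by parts plus the identity \eqref{eq:charac_norm_dvstar} so that the Hessian term is absorbed into the $\nu\|\nabla_v^2 f\|^2$ dissipation. The only cosmetic difference is that the paper invokes \eqref{eq:poincare_pi_zeroaverage} rather than \eqref{eq:poincare_pi}, which is immaterial here since the extra $\|\nabla_v f\|^2$ contribution is already allowed on the right-hand side.
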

	
	\begin{proof}
		The equation satisfied by $\nabla_v f$ is obtained by noticing that the following commutator identity holds:
		\begin{gather*}
			\left[ \nabla_v, \Lambda \right] = \nabla_x + \nu \nabla_v \,  .
		\end{gather*}
		Integrating against $\nabla_v f$, there holds
		\begin{align*}
			\frac{1}{2} \frac{\d}{\d t} \| & \nabla_v f \|^2_{ L^2(F_\star) } + \nu \| \nabla_v^2 f \|^2_{ L^2(F_\star) } + \nu \| \nabla_v f \|^2_{L^2(F_\star)} \\
			= & - \la \nabla_x f , \nabla_v f \ra_{ L^2(F_\star) } - \la \nabla_x \psi_f , \nabla_v f \ra_{ L^2(F_\star) } 
			+ \la \nabla_v^* \varphi , \nabla_v^* \nabla_v f \ra_{L^2(F_\star)}  \\
			\lesssim & \delta \left( \| \nabla_x f \|^2_{L^2(F_\star)} + \| \nabla_x \psi_f \|^2_{L^2(F_\star)} \right) + \eps \| \nabla_v^* \nabla_v f \|^2_{L^2(F_\star)}  \\
			& + \frac{1}{\delta} \| \nabla_v f \|^2_{L^2(F_\star)} + \frac{1}{\eps} \| \nabla_v^* \varphi \|^2_{L^2(F_\star)} \, .
		\end{align*}
		From there on the one hand one uses \eqref{eq:charac_norm_dvstar} with $g = \nabla_v f$. On the other hand, one combines \eqref{eq:upper_bound_grad_psi_weight} with the Poincaré inequality \eqref{eq:poincare_pi_zeroaverage} to get for some $C = C(C_\star, \overline{\kappa})$
		{$$\| \nabla_x \psi_f \|_{L^2(F_\star)} \le C \| \nabla_x f \|_{L^2(F_\star)} \,.$$}
		In turn, one concludes to \eqref{eq:dissipation_grad_v} by taking $\eps$ small enough with respect to $\nu$.
	\end{proof}
	
	\begin{lem}
		Let $f$ be a solution to \eqref{eq:linearized_source}, its spatial gradient satisfies the equation
		\begin{align*}
			\partial_t (\nabla_x f) + \Lambda \left(\nabla_x f  \right) = \left(\nabla_x^2 V_\star\right) \nabla_v f +  \left( v \cdot \nabla_x \psi \left[ \partial_{x_i}^* f \right] \right)_{i = 1}^{d} + \nabla_v^* \nabla_x \varphi,
		\end{align*}
		as well as the differential inequality
		\begin{equation}
			\label{eq:dissipation_grad_x}
			\begin{split}
				\frac{1}{2} \frac{\d}{\d t} \| & \nabla_x f \|^2_{L^2(F_\star)} + \frac{\nu}{2} \| \nabla_v \nabla_x f \|^2_{L^2(F_\star)} \\
				& \le  C \left( \| \nabla_x f \|^2_{L^2(F_\star)} + \| \nabla_v f \|^2_{L^2(F_\star)} + \| \nabla_x \varphi \|^2_{L^2(F_\star)} \right) ,
			\end{split}
		\end{equation}
		for some $C = C(\overline{\kappa}^\text{e}, \overline{\kappa}^\text{o}, C_\star, \nu)$.
	\end{lem}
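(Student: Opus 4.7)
I would derive the equation for $\nabla_x f$ by commuting $\partial_{x_i}$ through \eqref{eq:linearized_source}. The key commutator is
$$[\Lambda,\partial_{x_i}] = \big[(\nabla_x^2 V_\star)\,\nabla_v\big]_i,$$
which follows because $\nu\nabla_v^*\nabla_v$ and $v\cdot\nabla_x$ commute with $\partial_{x_i}$, while only $\nabla_x V_\star\cdot\nabla_v$ contributes through differentiation of $\nabla_x V_\star$. To handle the interaction term, I would use the identity $\partial_{x_i}\rho_f=-\rho_{\partial_{x_i}^* f}$ (obtained by writing $\partial_{x_i}F_\star=-\partial_{x_i}V_\star\,F_\star$ and integrating by parts in $x$), together with the commutation of $\KK$ with $\partial_{x_i}$, to get $\partial_{x_i}\psi_f=-\psi_{\partial_{x_i}^*f}$, so that $-\partial_{x_i}(v\cdot\nabla_x\psi_f)=v\cdot\nabla_x\psi_{\partial_{x_i}^*f}$. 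Since $[\nabla_v^*,\partial_{x_i}]=0$, the source term simply becomes $\nabla_v^*\partial_{x_i}\varphi$, producing the stated evolution equation.

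To obtain the differential inequality I would take the $L^2(F_\star)$ inner product of the equation for $\partial_{x_i} f$ with $\partial_{x_i} f$ and sum over $i$. The transport part of $\Lambda$ is skew-symmetric in $L^2(F_\star)$ while its diffusive part gives, by \eqref{eq:charac_norm_dvstar} applied to $g=\partial_{x_i}f$ (or directly the adjointness of $\nabla_v^*$ and $\nabla_v$), the dissipation $\nu\|\nabla_v\nabla_x f\|_{L^2(F_\star)}^2$. The Hessian term is controlled by Cauchy-Schwarz followed by the weighted inequality \eqref{eq:poincare_hessianVstar} with $g=\nabla_v f$ and Young's inequality, producing a contribution bounded by $C(\|\nabla_v f\|_{L^2(F_\star)}^2+\|\nabla_x f\|_{L^2(F_\star)}^2)$ plus a small multiple of $\|\nabla_x\nabla_v f\|_{L^2(F_\star)}^2$ that can be absorbed into the $\nu\|\nabla_v\nabla_x f\|^2$ on the left (using that $\nabla_x\nabla_v=\nabla_v\nabla_x$). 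The source term is handled by moving $\nabla_v^*$ by adjunction, $\langle\nabla_v^*\partial_{x_i}\varphi,\partial_{x_i}f\rangle_{L^2(F_\star)}=\langle\partial_{x_i}\varphi,\nabla_v\partial_{x_i}f\rangle_{L^2(F_\star)}$, and then Young's inequality with a small enough constant to absorb into the dissipation.

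The most delicate term is the nonlocal one, $\sum_i\langle v\cdot\nabla_x\psi_{\partial_{x_i}^*f},\partial_{x_i}f\rangle_{L^2(F_\star)}$. I would bound $\|v\cdot\nabla_x\psi_g\|_{L^2(F_\star)}^2=\|\nabla_x\psi_g\|_{L^2(\rho_\star)}^2$ using the fact that $\int|v|^2 M\,\d v=d$, then apply \eqref{eq:upper_bound_grad_psi_weight} to obtain $\|\nabla_x\psi_{\partial_{x_i}^*f}\|_{L^2(F_\star)}\le C\|\Pi(\partial_{x_i}^*f)\|_{L^2(F_\star)}$. Since $\partial_{x_i}^*$ acts only on $x$ and commutes with $\Pi$, $\Pi(\partial_{x_i}^*f)=\partial_{x_i}^*(\Pi f)$, which is controlled by $\|\nabla_x\Pi f\|_{L^2(\rho_\star)}+\||\nabla V_\star|\Pi f\|_{L^2(\rho_\star)}$. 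The weighted Poincaré inequality \eqref{eq:Poincare_gV_rho_star} (recall $\int\Pi f\,\rho_\star=0$ when $f\in\HH_0$, or one treats the mean separately) dominates the second term by the first, giving an overall bound of order $\|\nabla_x f\|_{L^2(F_\star)}$, and hence a contribution $C\|\nabla_x f\|_{L^2(F_\star)}^2$ to the right-hand side.

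The main obstacle is the interplay between the Hessian term and the dissipation: one needs \eqref{eq:smoothness_V} (through \eqref{eq:hess_grad_V_star} and \eqref{eq:poincare_hessianVstar}) to turn the potentially rough coefficient $|\nabla_x^2V_\star|$ into a zeroth order contribution plus an absorbable fraction of the gradient dissipation; without the mild growth hypothesis the estimate would degrade into one involving $\|\nabla_x^* f\|_{L^2(F_\star)}$ and could not be closed at the $H^1_x L^2_v$ level as required here. Gathering all contributions and choosing the small parameters in Young's inequality to absorb the gradient terms into $\tfrac{\nu}{2}\|\nabla_v\nabla_x f\|^2_{L^2(F_\star)}$ yields \eqref{eq:dissipation_grad_x}.
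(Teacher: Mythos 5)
Your proposal is correct and follows essentially the same route as the paper: commute $\nabla_x$ with $\Lambda$ using $[\nabla_x,\Lambda]=-(\nabla_x^2V_\star)\nabla_v$ and $\nabla_x\psi_f=-\psi[\nabla_x^*f]$, test against $\nabla_x f$, control the Hessian term via \eqref{eq:poincare_hessianVstar} and absorb the resulting $\|\nabla_x\nabla_v f\|^2$ piece into the dissipation, bound the nonlocal term through \eqref{eq:upper_bound_grad_psi_weight} combined with the (weighted) Poincaré inequalities, and treat the source by adjunction and Young. Your handling of the nonlocal term via $\Pi\partial_{x_i}^*f=\partial_{x_i}^*\Pi f$ and \eqref{eq:Poincare_gV_rho_star} is in fact a slightly more explicit version of the step the paper states in one line.
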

	
	\begin{proof}
		The differential equation satisfied by $\nabla_x f$ is immediately obtained by noticing
		\begin{gather*}
			\left[ \nabla_x, \Lambda \right] = - \left( \nabla_x^2 V_\star \right) \nabla_v
			\quad \text{and} \quad
			\nabla_x \psi[f] = - \psi [ \nabla_x^* f ] \, .
		\end{gather*}
		integrating against $\nabla_x f$, we get
		\begin{align*}
			\frac{1}{2} \frac{\d}{\d t} \| & \nabla_x f \|^2_{L^2(F_\star)} + \nu \| \nabla_v \nabla_x f \|^2_{L^2(F_\star)} \\
			= &  \left\la \left(\nabla_x^2 V_\star\right) \nabla_v f , \nabla_x f \right\ra_{L^2(F_\star)} + \sum_{i = 1}^{d} \left\la v \cdot \nabla_x \psi\left[ \partial_{x_i}^* f \right] , \partial_{x_i} f \right\ra_{L^2(F_\star)} \\
			& + \la \nabla_x \varphi , \nabla_x \nabla_v f \ra_{ L^2(F_\star) }.
		\end{align*}
		Concerning the first source term, thanks to the weighted Poincaré inequality \eqref{eq:poincare_hessianVstar} with (say) $\eps =1$, one has for some $C = C(C_\star)$ which may change from line to line
		\begin{align*}
			\big\la & \left(\nabla_x^2 V_\star\right) \nabla_v f , \nabla_x f \big\ra_{L^2(F_\star)} \\
			& \leq C \left( \| \nabla_v f \|_{L^2(F_\star)} + \| \nabla_x \nabla_v f \|_{L^2(F_\star)} \right) \| \nabla_x f \|_{L^2(F_\star)} \\
			& \leq C \left( \eps \| \nabla_x \nabla_v f \|_{L^2(F_\star)}^2 + \frac{1}{\eps} \| \nabla_x f \|_{L^2(F_\star)}^2 + \| \nabla_v f \|^2_{L^2(F_\star)} \right) \, .
		\end{align*}
		Concerning the second source term, we have from  the bound \eqref{eq:upper_bound_grad_psi_weight} and the Poincaré inequality \eqref{eq:poincare_pi_zeroaverage} that for some $C = C(C_\star, \overline{\kappa}^\text{e}, \overline{\kappa}^\text{o})$
		{\begin{align*}
				\sum_{i = 1}^{d} \left\la v \cdot \nabla_x \psi\left[ \partial_{x_i}^* f \right] , \partial_{x_i} f \right\ra_{L^2(F_\star)} \le C  \| \nabla_x f \|^2_{L^2(F_\star)} 
		\end{align*}}
		Concerning the third term, we have that 
		\begin{align*}
			\la \nabla_x \varphi , \nabla_x \nabla_v f \ra_{ L^2(F_\star) } \le & \| \nabla_x \varphi \|_{L^2(F_\star)} \| \nabla_x \nabla_v f \|_{ L^2(F_\star) } \\
			\le &\frac{1}{2 \nu} \| \nabla_x \varphi \|^2_{L^2(F_\star)} +  \frac{\nu}{2} \| \nabla_x \nabla_v f \|^2_{L^2(F_\star)} \, .
		\end{align*}
		This concludes the proof.
	\end{proof}
	
	\begin{lem}
		Let $f$ be a solution to \eqref{eq:linearized_source}, the inner product of its partial gradients satisfies the differential inequality
		\begin{equation}
			\label{eq:dissipation_grad_xv}
			\begin{split}
				\frac{\d}{\d t} \la \nabla_x f & , \nabla_v f \ra_{L^2(F_\star)} + \frac{1}{2} \| \nabla_x f \|^2_{L^2(F_\star)} \le \delta  \| \nabla_x \nabla_v f \|^2_{L^2(F_\star)} + C \| f \|^2_{L^2(F_\star)} \\
				& + \frac{C}{\delta} \left( \| \nabla_v f \|_{L^2(F_\star)}^2 + \| \nabla_v^2 f \|_{L^2(F_\star)}^2 + \| \varphi \|_{L^2(F_\star)}^2 + \| \nabla_v \varphi \|_{L^2(F_\star)}^2  \right)  \, ,
			\end{split}
		\end{equation}
		for any $\delta \in (0, \delta_1)$, where the constants $C$ and $\delta_1$ depend on $\overline{\kappa}^\text{e}, \overline{\kappa}^\text{o}, C_\star$ and $\nu$.
	\end{lem}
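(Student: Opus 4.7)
The strategy is the classical Hérau--Villani cross-term manipulation, which exploits the commutator $[\nabla_v, v\cdot\nabla_x] = \nabla_x$ to promote spatial dissipation via the mixed quantity $\la \nabla_x f, \nabla_v f\ra_{L^2(F_\star)}$. The crucial positive contribution comes from the source term $-\nabla_x f$ on the right-hand side of the $\nabla_v f$ equation: when paired with $\nabla_x f$ it yields exactly $-\|\nabla_x f\|^2_{L^2(F_\star)}$, i.e.\ the main dissipative quantity. All other contributions must be absorbed either into this term or into the admissible right-hand side.

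Concretely, I differentiate $\la \nabla_x f, \nabla_v f\ra_{L^2(F_\star)}$ in time and substitute the evolution equations derived in the two preceding lemmas. Splitting $\Lambda = T_0 + \nu\nabla_v^*\nabla_v$ with $T_0 := v\cdot\nabla_x - \nabla_x V_\star\cdot\nabla_v$ (skew-symmetric in $L^2(F_\star)$) and $\nabla_v^*\nabla_v$ (symmetric), the antisymmetric transport contributions cancel pairwise in $\la \Lambda\nabla_x f, \nabla_v f\ra + \la \nabla_x f, \Lambda\nabla_v f\ra$, leaving only $2\nu \la \nabla_x\nabla_v f, \nabla_v^2 f\ra_{L^2(F_\star)}$, whose modulus is controlled by $\delta\|\nabla_x\nabla_v f\|^2 + (C/\delta)\|\nabla_v^2 f\|^2$ via Cauchy--Schwarz and Young. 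The damping $-\nu\la \nabla_x f, \nabla_v f\ra$ coming from the $-\nu\nabla_v f$ source of the $\nabla_v f$ equation is absorbed with a fixed small Young parameter, yielding at most $\tfrac14\|\nabla_x f\|^2 + C\|\nabla_v f\|^2$.

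The interaction terms $-\la \nabla_x f, \nabla_x\psi_f\ra$ and $\sum_i\la v\cdot\nabla_x\psi[\partial_{x_i}^*f], \partial_{v_i}f\ra$ are both controlled through \eqref{eq:upper_bound_grad_psi_weight}. The first contributes at most $C\|\nabla_x f\|\|f\|\le \eps\|\nabla_x f\|^2 + C_\eps\|f\|^2$, which accounts for the $C\|f\|^2$ term in the claim with a $\delta$-independent constant. For the second, a short calculation based on $\partial_{x_i}\rho_\star = -(\partial_{x_i}V_\star)\rho_\star$ gives the identity $\rho_{\partial_{x_i}^*f} = -\partial_{x_i}\rho_f$, which combined with \eqref{eq:upper_bound_grad_psi_weight} and \eqref{eq:poincare_pi_zeroaverage} yields $\|\Pi(\partial_{x_i}^*f)\|_{L^2(F_\star)}\le C\|\nabla_x f\|_{L^2(F_\star)}$; Young's inequality then produces $\eps\|\nabla_x f\|^2 + C\|\nabla_v f\|^2$.

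The main technical obstacle is the Hessian term $\sum_i\la ((\nabla_x^2V_\star)\nabla_v f)_i, \partial_{v_i}f\ra$ coming from the commutator $[\nabla_x,\Lambda] = -(\nabla_x^2 V_\star)\nabla_v$, since $\nabla_x^2V_\star$ is generically unbounded. Here assumption \eqref{eq:hess_grad_V_star} together with the weighted Poincaré estimate \eqref{eq:poincare_hessianVstar} yields
$$\||\nabla_x^2V_\star|\nabla_v f\|_{L^2(F_\star)}\le C_\eps\|\nabla_v f\|_{L^2(F_\star)} + \eps\|\nabla_x\nabla_v f\|_{L^2(F_\star)},$$
and a further Young's inequality distributes the gradient factor into $\delta\|\nabla_x\nabla_v f\|^2$ while loading the remainder onto $(C/\delta)\|\nabla_v f\|^2$. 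The forcing terms involving $\varphi$ are treated by systematic integration by parts in $v$ (using the adjoint identity $\la \nabla_v^* u, w\ra = \la u,\nabla_v w\ra$ together with the commutator $\partial_{v_i}\nabla_v^*\varphi = \nabla_v^*\partial_{v_i}\varphi + \varphi_i$) to yield contributions of the form $(C/\delta)(\|\varphi\|^2 + \|\nabla_v\varphi\|^2)$ up to an absorbable $\eps\|\nabla_x f\|^2$. Finally, choosing all fixed Young parameters small enough so that the total absorption into $\|\nabla_x f\|^2$ leaves a coefficient of at least $1/2$ in the dissipation imposes the restriction $\delta<\delta_1$ and concludes the proof.
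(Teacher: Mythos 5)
Your proposal is correct and follows essentially the same route as the paper's proof: the same decomposition into the cancelling skew-symmetric transport parts, the symmetric $\nu\nabla_v^*\nabla_v$ contribution controlled by $\delta\|\nabla_x\nabla_v f\|^2+\tfrac{C}{\delta}\|\nabla_v^2 f\|^2$, the damping term absorbed with a fixed Young parameter, the interaction terms via \eqref{eq:upper_bound_grad_psi_weight} and the Poincaré inequality, the Hessian term via \eqref{eq:poincare_hessianVstar}, and the source terms via integration by parts in $v$. The only cosmetic differences are that you make explicit the identity $\rho_{\partial_{x_i}^*f}=-\partial_{x_i}\rho_f$ behind the bound on $\psi[\partial_{x_i}^*f]$, and you carry the (correct) factor $2$ in the symmetric-part pairing; neither changes the argument.
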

	
	\begin{proof}
		On the one hand, there holds
		\begin{align*}
			\frac{\d}{\d t} \la \nabla_v f & , \nabla_x f \ra_{L^2(F_\star)} \\
			= & \left\la \partial_t (\nabla_v f) , \nabla_x f \right\ra_{L^2(F_\star)} + \left\la \partial_t (\nabla_x f) , \nabla_v f \right\ra_{L^2(F_\star)} \\
			= & - \left\la \Lambda (\nabla_v f) , \nabla_x f \right\ra_{L^2(F_\star)} - \| \nabla_x f \|^2_{L^2(F_\star)} - \nu \left\la \nabla_v f , \nabla_x f \right\ra_{L^2(F_\star)} \\
			& - \left\la \Lambda (\nabla_x f) , \nabla_v f \right\ra_{L^2(F_\star)} + \left\la \left( \nabla_x^2 V_\star \right) \nabla_v f , \nabla_v f \right\ra_{L^2(F_\star)}   \\
			& - \la \nabla_x \psi_f , \nabla_x f \ra_{L^2(F_\star)} + \sum_{i = 1}^{d} \left\la v \cdot \nabla_x \psi\left[ \partial_{x_i}^* f \right] , \partial_{v_i} f \right\ra_{L^2(F_\star)}  \\
			&  \\
			& + \la \nabla_v \nabla_v^* \varphi , \nabla_x f \ra_{L^2(F_\star) } + \la \nabla_x \nabla_v^* \varphi , \nabla_v f \ra_{L^2(F_\star) } \\
			= & - \| \nabla_x f \|^2_{L^2(F_\star)} + X_1 + X_2 + X_3 + X_4,
		\end{align*}
		where we denoted (and using that the symmetric part of $\Lambda$ is $- L = \nu \nabla^*_v \nabla_v$)
		\begin{align*}
			X_1 & {= - \la \Lambda (\nabla_v f) , \nabla_x f \ra_{L^2(F_\star)} - \la \Lambda (\nabla_x f) , \nabla_v f \ra_{L^2(F_\star)} + \left\la \left( \nabla_x^2 V_\star \right) \nabla_v f , \nabla_v f \right\ra_{L^2(F_\star)}} \\
			& = - \nu \la \nabla_v \nabla_x f , \nabla_v^2 f \ra_{L^2(F_\star)} + \left\la \left( \nabla_x^2 V_\star \right) \nabla_v f , \nabla_v f \right\ra_{L^2(F_\star)}
		\end{align*}
		as well as
		\begin{gather*}
			X_2 := - \nu \left\la \nabla_v f , \nabla_x f \right\ra_{L^2(F_\star)}  , \\
			X_3 := - \la \nabla_x \psi_f , \nabla_x f \ra + \sum_{i = 1}^{d} \left\la v \cdot \nabla_x \psi\left[ \partial_{x_i}^* f \right] , \partial_{v_i} f \right\ra_{L^2(F_\star)} , \\
			X_4 := \la \nabla_v^* \varphi , \nabla_v^* \nabla_x f \ra_{L^2(F_\star) } + \la \nabla^*_v \varphi , \nabla_x^* \nabla_v f \ra_{L^2(F_\star) } \, .
		\end{gather*}
		As in the proof of \eqref{eq:dissipation_grad_x}, one has for the term $X_1$ for some $C=C(C_\star, \nu)$ that may change from line to line
		\begin{align*}
			X_1 \lesssim & C \| \nabla_v \nabla_x f \|_{ L^2(F_\star) } \left( \| \nabla_v f \|_{ L^2(F_\star) } + \nu \| \nabla_v^2 f \|_{ L^2(F_\star) } \right)  + C \| \nabla_v f \|^2_{ L^2(F_\star) } \\
			\lesssim & \delta \| \nabla_v \nabla_x f \|^2_{L^2(F_\star)} + \frac{1}{\delta} \left( \| \nabla_v f \|^2 + \| \nabla^2_v f \|^2_{L^2(F_\star)} \right)
		\end{align*}
		For the term $X_2$, one has
		\begin{align*}
			X_2 \le & \nu \| \nabla_v f \|_{ L^2(F_\star) } \| \nabla_x f \|_{ L^2(F_\star) } \le \nu^2 \| \nabla_v f \|^2_{L^2(F_\star)} + \frac{1}{4} \| \nabla_x f \|^2_{L^2(F_\star)} \, .
		\end{align*}
		As in the proof of \eqref{eq:dissipation_grad_x}, one has for the term $X_3$ for some $C=C(C_\star, \overline{\kappa}^\text{e}, \overline{\kappa}^\text{o})$ that may change from line to line
		{\begin{align*}
				X_3 \lesssim C \left( \| f \|_{L^2(F_\star)} \| \nabla_x f \|_{ L^2(F_\star) } 
				+ \| \nabla_v f \|_{L^2(F_\star)} \| \nabla_x f \|_{L^2(F_\star)} \right) \, ,
		\end{align*}}
		therefore, there holds
		\begin{align*}
			X_3 \lesssim \eps \| \nabla_x f \|^2_{L^2(F_\star)} + \frac{C}{\eps} \left(  \| f \|_{L^2(F_\star)}^2 + \| \nabla_v f \|_{L^2(F_\star)}^2  \right) \, .
		\end{align*}
		Finally, one can show for $X_4$
		{\begin{align*}
				X_4 \lesssim & \left( \| \varphi \|_{L^2(F_\star)} + \| \nabla_v \varphi \|_{L^2(F_\star)} \right) \left( \| \nabla _x f \|_{L^2(F_\star)} + \| \nabla _v f \|_{L^2(F_\star)} +\| \nabla_v \nabla_x f \|_{L^2(F_\star)} \right)  \\
				\lesssim & \delta \left( \| \nabla_x f \|^2_{L^2(F_\star)}+ \| \nabla _v f \|^2_{L^2(F_\star)} + \| \nabla_v \nabla_x f \|^2_{L^2(F_\star)} \right) + \delta^{-1} \left( \| \varphi \|^2_{L^2(F_\star)} + \| \nabla_v \varphi \|^2_{L^2(F_\star)} \right) \, .
		\end{align*}}
		Gathering all estimates and taking $\delta$ and $\eps$ small enough, we conclude the proof.
	\end{proof}

	\subsection{Mixed $H^1_{x,v}$-$L^2_{x,v}$--hypocoercivity}
	In this section, we introduce a Lyapunov functional for the linearized equation. It involves a combination of the Dolbeault-Mouhot-Schmeiser $L^2$ hypocoercivity functional and the Hérau-Nier / Villani $H^1$ hypocoercivity functional.

	Define for some small parameters $a, b, c \in (0, 1)$ and the following functional:
	$$\EE_{1,1}[ f ] := \EE_0[f ] + a \| \nabla_v f(t) \|^2_{ L^2(F_\star) } + b \la \nabla_v f , \nabla_x f  \ra_{ L^2(F_\star) } + c \| \nabla_x f \|^2_{ L^2(F_\star) } \, .$$
	
	\begin{lem}
		\label{lem:Lyapunov_order_1}
		Under the assumption $b^2 \le ac$, the functional $\EE_{1,1}$ is equivalent to the $H^1(F_\star)$--norm in the sense that, for some $M = M(C_\star, \overline{\kappa}^\text{e})$ and $m=m(a,c)$, one has 
		\begin{equation}
			\label{eq:equiv_H1_Lyapunov_1}
			m \| f \|^2_{ H^1(F_\star) } \le \EE_{1,1}[f] \le M \| f \|^2_{ H^1(F_\star) }.
		\end{equation}
		Furthermore, under the smallness assumptions
		$$\underline{\kappa}^\text{e} < \delta^\text{e}(C_\star,\theta) \qquad \text{and} \qquad \overline{\kappa}^\text{o}< \delta_2 (C_\star, \overline{\kappa}^\text{e}, \underline{\kappa}^\text{e}, \nu ) \, ,$$
		for some appropriate choice of $a, b$ and $c$, there exists $\lambda, \mu$ and $C$ depending on $C_\star, \overline{\kappa}^\text{e}, \overline{\kappa}^\text{o}, \underline{\kappa}^\text{e}$ and $\nu$ such that any solution to \eqref{eq:linearized_source} satisfies the differential inequality
		$$\frac{\d}{\d t} \EE_{1,1}[f] + \lambda \, \EE_{1,1}[f ] + \mu  \| \nabla_v f \|^2_{H^1 (F_\star) } \le C \| \varphi \|^2_{H^1(F_\star)} \, .$$
	\end{lem}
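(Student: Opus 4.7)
The proof splits into two parts: the norm equivalence \eqref{eq:equiv_H1_Lyapunov_1}, which is purely algebraic, and the differential inequality, which is assembled from four already-established dissipation estimates with carefully tuned weights.

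For the equivalence \eqref{eq:equiv_H1_Lyapunov_1}, I would combine the chain $\tfrac{1}{8}\|f\|^2_{L^2(F_\star)} \le \EE_0[f] \le \tfrac{3C}{8}\|f\|^2_{L^2(F_\star)}$ (which follows from Lemmas~\ref{lem:Hilbert_space_structure} and~\ref{lem:Lyapunov_order_0}, with $C = C(\overline{\kappa}^{\text{e}}, C_\star)$) with the Cauchy-Schwarz estimate
$$|b \la \nabla_v f, \nabla_x f\ra_{L^2(F_\star)}| \le \sqrt{ac}\,\|\nabla_v f\|_{L^2(F_\star)} \|\nabla_x f\|_{L^2(F_\star)} \le \tfrac{a}{2}\|\nabla_v f\|^2_{L^2(F_\star)} + \tfrac{c}{2}\|\nabla_x f\|^2_{L^2(F_\star)},$$
which uses exactly the hypothesis $b^2 \le ac$. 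This immediately yields both sides of \eqref{eq:equiv_H1_Lyapunov_1}, with $m$ of order $\min(1,a,c)$ and $M$ depending only on $C$ (since $a,b,c \in (0,1)$).

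For the differential inequality, I would differentiate $\EE_{1,1}[f]$ along \eqref{eq:PSCVFP_order_0} and inject the four dissipation estimates: the one for $\EE_0$ from Lemma~\ref{lem:Lyapunov_order_0} with weight $1$, \eqref{eq:dissipation_grad_v} with weight $2a$, \eqref{eq:dissipation_grad_x} with weight $2c$, and \eqref{eq:dissipation_grad_xv} with weight $b$. This produces genuine negative contributions in $\|f\|^2_{L^2(F_\star)}$ and $\|\nabla_v f\|^2_{L^2(F_\star)}$ (from $\EE_0$), of size $a\nu$ in $\|\nabla^2_v f\|^2_{L^2(F_\star)}$, of size $c\nu$ in $\|\nabla_x \nabla_v f\|^2_{L^2(F_\star)}$, and crucially of size $b/2$ in $\|\nabla_x f\|^2_{L^2(F_\star)}$, the latter being the only source of dissipation available in the spatial gradient direction. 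Against these, there appear bad cross-terms of sizes $a\delta$ and $c$ in front of $\|\nabla_x f\|^2$, of sizes $(a+b)\delta^{-1}$ and $c$ in front of $\|\nabla_v f\|^2$, of size $b\delta^{-1}$ in front of $\|\nabla^2_v f\|^2$, of size $b\delta$ in front of $\|\nabla_x \nabla_v f\|^2$, and of size $b$ in front of $\|f\|^2$, plus source terms that are all controlled by $\|\varphi\|^2_{H^1(F_\star)}$.

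The main obstacle is the delicate balance between the parameters $a,b,c,\delta$: the dissipation of size $b$ on the spatial gradient must dominate the bad $a\delta$ and $c$ terms, while the newly created bad terms in $\|\nabla^2_v f\|^2$ and $\|\nabla_x\nabla_v f\|^2$ must be absorbed by the weak gains of order $a\nu$ and $c\nu$, all under the constraint $b^2 \le ac$. A consistent hierarchy is obtained for instance by taking $\delta$ small, then $b \ll \delta^2$, $a \sim b/\delta$ and $c \sim b\sqrt{\delta}$, which yields $ac \sim b^2/\sqrt{\delta} \ge b^2$ as required and makes every bad term strictly smaller than its negative counterpart. Once the parameters are tuned, a slightly smaller threshold $\overline{\kappa}^{\text{o}} < \delta_2$ (refined from the $\delta^{\text{o}}$ of Lemma~\ref{lem:Lyapunov_order_0}) absorbs any residual contributions of the odd interaction term, and the norm equivalence \eqref{eq:equiv_H1_Lyapunov_1} converts the dissipation bound $-\lambda\|f\|^2_{H^1(F_\star)}$ into $-\lambda\EE_{1,1}[f]$, concluding the proof.
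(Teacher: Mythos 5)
Your overall strategy is exactly the paper's: the norm equivalence via Cauchy--Schwarz with $b^2\le ac$ is correct as written, and the differential inequality is indeed obtained by summing Lemma~\ref{lem:Lyapunov_order_0} with weight $1$, \eqref{eq:dissipation_grad_v} with weight $2a$, \eqref{eq:dissipation_grad_x} with weight $2c$ and \eqref{eq:dissipation_grad_xv} with weight $b$; your inventory of good and bad terms is also accurate. The gap is in the final parameter tuning, which as proposed does not close the estimate. First, with $a\sim b/\delta$ the bad contribution $2a\delta\,\|\nabla_x f\|^2_{L^2(F_\star)}$ coming from \eqref{eq:dissipation_grad_v} is of size $2b$, i.e.\ of the \emph{same} order as (in fact larger than) the only available dissipation $\tfrac{b}{2}\|\nabla_x f\|^2_{L^2(F_\star)}$ produced by \eqref{eq:dissipation_grad_xv}; this violates the very constraint you state one line earlier. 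Second, and more structurally, using a single $\delta$ in both \eqref{eq:dissipation_grad_v} and \eqref{eq:dissipation_grad_xv} forces two incompatible requirements: absorbing $2a\delta\,\|\nabla_x f\|^2$ into $\tfrac{b}{2}\|\nabla_x f\|^2$ requires $\delta\lesssim b/a$, while absorbing $C b\delta^{-1}\|\nabla_v^2 f\|^2$ into the gain $a\nu\|\nabla_v^2 f\|^2$ requires $\delta\gtrsim Cb/(a\nu)$. These two windows overlap only when $\nu\gtrsim C$, so your choice works only for large friction, whereas the lemma must hold for every $\nu>0$.

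The paper avoids this by decoupling the two smallness parameters: it uses $\delta_v$ in \eqref{eq:dissipation_grad_v} and a \emph{different} $\delta_{xv}$ in \eqref{eq:dissipation_grad_xv}, subject to
$$\frac{b}{a}\ll\delta_{xv}\ll\frac{c}{b}\qquad\text{and}\qquad a\ll\delta_v\ll\frac{b}{a}\,,$$
realized for instance by $c=\eps^{21}$, $b=\eps^{20}$, $a=\eps^{16}$, $\delta_v=\eps^{8}$, $\delta_{xv}=\eps^{2}$ with $\eps$ small depending on $C$, $\nu$ and $\lambda_M$ (note $b^2=\eps^{40}\le ac=\eps^{37}$). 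With $\delta_v$ and $\delta_{xv}$ independent, the constraint $a\delta_v\ll b$ no longer conflicts with $b/\delta_{xv}\ll a\nu$, and all five coefficients can be made strictly negative simultaneously for arbitrary $\nu>0$. A small additional remark: no further refinement of the threshold on $\overline{\kappa}^{\text{o}}$ beyond that of Lemma~\ref{lem:Lyapunov_order_0} is needed, since the odd-kernel contributions in the first-order estimates are already subsumed in the constants $C(\overline{\kappa}^{\text{e}},\overline{\kappa}^{\text{o}},C_\star,\nu)$ of \eqref{eq:dissipation_grad_v}--\eqref{eq:dissipation_grad_xv}.
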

	
	\newcommand{\bmu}{\boldsymbol{\mu}}
	\begin{proof}
		First of all, note that
		\begin{equation*}
			\begin{split}
				\EE_0[f] + \frac{a}{2} & \| \nabla_v f \|^2_{ L^2(F_\star) } +\frac{c}{2} \| \nabla_x f \|^2_{ L^2(F_\star) } \\
				& \le \EE_{1,1}[f] \le \EE_0[f] + (a+b) \| \nabla_v f \|^2_{ L^2(F_\star) } + (b + c) \| \nabla_x f \|^2_{ L^2(F_\star) },
			\end{split}
		\end{equation*}
		which implies the equivalence of $\EE_{1,1}$ with the $H^1(F_\star)$--norm as stated by \eqref{eq:equiv_H1_Lyapunov_1}.  Gathering Lemma \ref{lem:Lyapunov_order_0}, \eqref{eq:dissipation_grad_v} with some $\delta_v$, \eqref{eq:dissipation_grad_xv}  with some $\delta_{xv}$ and \eqref{eq:dissipation_grad_x}, one gets the energy estimate for some $\mu = \mu(C_\star, \overline{\kappa}^\text{e}, \overline{\kappa}^\text{o}, \underline{\kappa}^\text{e})$ and $K = K(C_\star, \overline{\kappa}^\text{e}, \overline{\kappa}^\text{o}, \underline{\kappa}^\text{e})$
		\begin{align*}
			\frac{\d}{\d t} \EE_{1,1}[f] + K\EE_0[f] \left( \mu - b \right)
			& + K\| \nabla_v f \|^2_{L^2(F_\star)} \left( \mu - \frac{a}{\delta_v} - \frac{b}{\delta_{xv}} - c \right)  \\
			& + K\| \nabla_x f \|^2_{L^2(F_\star)} \left( \mu b - a \delta_v  -c \right) \\
			& + K\| \nabla_v^2 f \|^2_{L^2(F_\star)} \left( \mu a - \frac{b}{ \delta_{xv} } \right) \\
			& + K\| \nabla_x \nabla_v f \|^2_{L^2(F_\star)} (\mu c - b \delta_{xv} ) \le C \| \varphi \|_{H^1(F_\star)}^2 \, .
		\end{align*}
		Considering $c \ll b \ll 1$, it is enough to choose parameters such that
		$$\frac{b}{a} \ll \delta_{xv} \ll \frac{c}{b} \quad \text{and} \quad a \ll \delta_v \ll \frac{b}{a}$$
		and one checks that the scaling
		$$c = \eps^{21}, \quad b = \eps^{20}, \quad a = \eps^{16}, \quad \delta_v = \eps^{8}, \quad \text{and} \quad \delta_{xv} = \eps^{2}$$
		fits the requirement. Taking $\eps$ small enough, we conclude this proof by norm equivalence \eqref{eq:equiv_H1_Lyapunov_1}.
	\end{proof}
	
	The following hypocoercivity result is a direct consequence of Lemma \ref{lem:Lyapunov_order_1}.
	
	\begin{prop}
		\label{prop:H1_linear_hypocoercivity}
		Under the smallness assumptions
		$$\underline{\kappa}^\text{e} < \delta^\text{e}(C_\star,\theta) \qquad \text{and} \qquad \overline{\kappa}^\text{o}< \delta^\text{o}(C_\star, \overline{\kappa}^\text{e}, \underline{\kappa}^\text{e}, \nu )$$
		there exists $\lambda$ and $C$ depending on $C_\star, \overline{\kappa}^\text{e}, \overline{\kappa}^\text{o}, \underline{\kappa}^\text{e}$ and $\nu$ such that any solution to \eqref{eq:linearized_source} satisfies the differential inequality
		\begin{align*}
			\sup_{t \ge 0} e^{2 \lambda t} \| f(t) \|_{H^1(F_\star)}^2 + & \int_0^\infty e^{2 \lambda t} \| \nabla_v f(t) \|_{H^1(F_\star)}^2 \d t \\
			& \le C \left( \| f_\ini \|_{H^1(F_\star)}^2 + \int_0^\infty e^{2 \lambda t} \|\varphi(t) \|_{H^1(F_\star)}^2 \d t \right) \, .
		\end{align*}
	\end{prop}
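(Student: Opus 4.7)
The proposition follows as a fairly direct consequence of Lemma~\ref{lem:Lyapunov_order_1} combined with a Gronwall argument and the norm equivalence \eqref{eq:equiv_H1_Lyapunov_1}. Here is how I would organize the proof. First, assuming smoothness and sufficient decay of $f$ so that all quantities make sense (this can be ensured by regularization of $f_\ini$ and $\varphi$, and then passing to the limit at the end by the linear well-posedness recalled before Section~\ref{scn:steady_state}), apply Lemma~\ref{lem:Lyapunov_order_1} to obtain
\[
\frac{\d}{\d t} \EE_{1,1}[f(t)] + \lambda_0\, \EE_{1,1}[f(t)] + \mu \|\nabla_v f(t)\|^2_{H^1(F_\star)} \le C \|\varphi(t)\|^2_{H^1(F_\star)}
\]
for some constants $\lambda_0,\mu,C$ depending on $C_\star,\overline{\kappa}^\text{e},\overline{\kappa}^\text{o},\underline{\kappa}^\text{e}$ and $\nu$, and valid provided the smallness assumptions on $\underline{\kappa}^\text{e}$ and $\overline{\kappa}^\text{o}$ hold.

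Second, set $\lambda := \lambda_0/2$ and multiply the differential inequality by $e^{2\lambda t} = e^{\lambda_0 t}$. Using $e^{\lambda_0 t}\lambda_0\EE_{1,1}[f] = \frac{\d}{\d t}(e^{\lambda_0 t}\EE_{1,1}[f]) - e^{\lambda_0 t}\frac{\d}{\d t}\EE_{1,1}[f]$ yields
\[
\frac{\d}{\d t}\left( e^{2\lambda t}\EE_{1,1}[f(t)] \right) + \mu\, e^{2\lambda t}\|\nabla_v f(t)\|^2_{H^1(F_\star)} \le C\, e^{2\lambda t}\|\varphi(t)\|^2_{H^1(F_\star)}.
\]
Integrating between $0$ and any $T>0$ gives
\[
e^{2\lambda T}\EE_{1,1}[f(T)] + \mu \int_0^T e^{2\lambda t}\|\nabla_v f(t)\|^2_{H^1(F_\star)}\,\d t \le \EE_{1,1}[f_\ini] + C\int_0^T e^{2\lambda t}\|\varphi(t)\|^2_{H^1(F_\star)}\,\d t.
\]
Taking the supremum over $T\ge 0$ on the left (the first term being non-negative), then letting $T\to \infty$ in the integral term, and bounding the right-hand side by the full $[0,\infty)$ integral yields
\[
\sup_{T\ge 0} e^{2\lambda T}\EE_{1,1}[f(T)] + \mu \int_0^\infty e^{2\lambda t}\|\nabla_v f(t)\|^2_{H^1(F_\star)}\,\d t \le \EE_{1,1}[f_\ini] + C\int_0^\infty e^{2\lambda t}\|\varphi(t)\|^2_{H^1(F_\star)}\,\d t.
\]

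Third, invoke the two-sided bound \eqref{eq:equiv_H1_Lyapunov_1}, namely $m\|f\|^2_{H^1(F_\star)}\le \EE_{1,1}[f]\le M\|f\|^2_{H^1(F_\star)}$, to replace $\EE_{1,1}$ by the $H^1(F_\star)$ squared norm on both sides, up to a multiplicative constant $M/m$ absorbed into $C$. This delivers the stated estimate after a final relabeling of $\mu$ into the overall constant. There is no real obstacle here: the only point requiring slight care is that to justify differentiating $\EE_{1,1}[f(t)]$ in time and integrating by parts in the proof of Lemma~\ref{lem:Lyapunov_order_1}, one typically works with a smooth approximation, establishes the bound, and extends by density using the linear well-posedness of \eqref{eq:linearized_source} in $H^1(F_\star)$ together with continuous dependence on $f_\ini$ and $\varphi$; all terms are continuous in the appropriate topology so the passage to the limit is harmless.
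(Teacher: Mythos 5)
Your proof is correct and follows exactly the route the paper intends: the paper simply states that the proposition is a direct consequence of Lemma~\ref{lem:Lyapunov_order_1}, and your Gronwall argument (multiplying by $e^{\lambda_0 t}$ with $2\lambda=\lambda_0$, integrating in time, then invoking the norm equivalence \eqref{eq:equiv_H1_Lyapunov_1}) is the standard way to fill in that step. No gaps.
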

	
	\subsection{Hypoellipticity}
	In this section we modify the functional of the previous section with time weights to capture regularization properties of the VFP equation.

	Define for some small parameters $a, b, c > 0$ the time-weighted homogeneous functional
	\begin{equation*}
		\GG^{\hom}[t, f] = a t \| \nabla_v f \|^2_{L^2(F_\star)}  + b t^2 \la \nabla_v f , \nabla_x f  \ra_{L^2(F_\star)}
		+ c t^3 \| \nabla_x f  \|^2_{L^2(F_\star)} ,
	\end{equation*}
	as well as the inhomogeneous one:
	$$\GG[t, f] = \EE_0[f] + \GG^{\hom}[t, f].$$
	
	\begin{lem}
		\label{lem:Lyapunov_regularization_order_1}
		Assuming $b^2 \le ac$, there holds
		\begin{equation}
			\label{eq:equiv_reg_Lyapunov_1}
			\frac{1}{2} \GG^{\hom}[t, f] \le a t \| \nabla_v f \|^2_{L^2(F_\star)} + c t^3 \| \nabla_v f \|^2_{L^2(F_\star)} \le 2 \GG^{\hom}[t, f].
		\end{equation}
		Furthermore, under the smallness assumptions
		$$\underline{\kappa}^\text{e} < \delta^\text{e}(C_\star,\theta) \qquad \text{and} \qquad \overline{\kappa}^\text{o}< \delta^\text{o}(C_\star, \overline{\kappa}^\text{e}, \underline{\kappa}^\text{e}, \nu ) \, ,$$
		for some appropriate choice of $a, b$ and $c$, there exists $\mu$ and $C$ depending on $C_\star, \overline{\kappa}^\text{e}, \overline{\kappa}^\text{o}, \underline{\kappa}^\text{e}$ and $\nu$ such that any solution to \eqref{eq:linearized_source} satisfies the differential inequality, for $t\in(0,1]$, 
		\begin{equation}
			\label{eq:Lyapunov_hypoellipticity}
			\begin{split}
				\frac{\d}{\d t} \GG[t, f] 
				+ \mu \bigg( & \EE_0[f]
				+ \| \nabla_v f \|_{L^2(F_\star)}^2 
				+ t \| \nabla_v^2 f \|_{L^2(F_\star)} \\
				& + t^2 \| \nabla_x f \|_{L^2(F_\star)}^2
				+  t^3 \| \nabla_x \nabla_v f \|_{L^2(F_\star)}^2 \bigg) \\
				& \le C \left( \| \varphi \|_{L^2(F_\star)}^2 + t \| \nabla_v \varphi \|^2_{L^2(F_\star)} + t^3 \| \nabla_x \varphi(t) \|^2_{L^2(F_\star)} \right) \, .
			\end{split}
		\end{equation}
	\end{lem}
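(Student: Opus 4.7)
The norm equivalence \eqref{eq:equiv_reg_Lyapunov_1} follows from the Cauchy-Schwarz and Young inequalities: under $b^2 \le ac$ and for $t \in (0,1]$,
$$|b t^2 \langle \nabla_v f, \nabla_x f\rangle_{L^2(F_\star)}| \le \sqrt{ac}\, t^2 \|\nabla_v f\|_{L^2(F_\star)} \|\nabla_x f\|_{L^2(F_\star)} \le \tfrac{1}{2}\bigl(at \|\nabla_v f\|^2_{L^2(F_\star)} + ct^3 \|\nabla_x f\|^2_{L^2(F_\star)}\bigr),$$
by writing $\sqrt{ac}\, t^2 = \sqrt{(at)(ct^3)}$.

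For the differential inequality, I would compute $\frac{\d}{\d t}\GG[t,f]$ by splitting it into the derivative of $\EE_0$, controlled by Lemma \ref{lem:Lyapunov_order_0}, and the derivatives of the three time-weighted terms. Each such derivative further splits into a \emph{power contribution} (from differentiating $t^k$) and a \emph{dynamics contribution} (from differentiating the norm), the latter being controlled by multiplying the estimates \eqref{eq:dissipation_grad_v}, \eqref{eq:dissipation_grad_xv}, \eqref{eq:dissipation_grad_x} by $at$, $bt^2$, $ct^3$ respectively. Mimicking the proof of Lemma \ref{lem:Lyapunov_order_1}, the dynamics contributions produce the desired good terms $-\frac{at\nu}{2}\|\nabla_v^2 f\|^2$, $-\frac{bt^2}{2}\|\nabla_x f\|^2$ and $-\frac{ct^3\nu}{2}\|\nabla_x \nabla_v f\|^2$, plus coupling terms that are absorbed by the scaling $c \ll \delta_{xv} \ll b/a$, $a \ll \delta_v \ll b/a$ already used there.

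The novelty is handling the three \emph{power contributions} $a\|\nabla_v f\|^2_{L^2(F_\star)}$, $2bt \langle \nabla_v f, \nabla_x f\rangle_{L^2(F_\star)}$ and $3ct^2 \|\nabla_x f\|^2_{L^2(F_\star)}$. The first is absorbed by the $\mu \|\nabla_v f\|^2_{L^2(F_\star)}$ dissipation furnished by Lemma \ref{lem:Lyapunov_order_0}, provided $a$ is small compared to $\mu$. The third is absorbed by the $\frac{b}{2}t^2 \|\nabla_x f\|^2_{L^2(F_\star)}$ from the cross-term dissipation, provided $c \ll b$. The middle, unsigned term is split by Young as
$$2bt|\langle \nabla_v f, \nabla_x f\rangle_{L^2(F_\star)}| \le \eta \|\nabla_v f\|^2_{L^2(F_\star)} + \tfrac{b^2 t^2}{\eta} \|\nabla_x f\|^2_{L^2(F_\star)},$$
the first piece absorbed as above by $\mu \|\nabla_v f\|^2_{L^2(F_\star)}$ and the second by $\frac{b}{4}t^2\|\nabla_x f\|^2_{L^2(F_\star)}$ provided $b \ll \eta$, which is compatible with $b^2 \le ac$ by taking $b$ small enough. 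Finally, the forcing terms $\|\varphi\|^2_{L^2(F_\star)}$, $\|\nabla_v \varphi\|^2_{L^2(F_\star)}$, $\|\nabla_x \varphi\|^2_{L^2(F_\star)}$ appearing in the dissipation estimates inherit the weights $1, t, t^3$ (using $t \le 1$ where needed) and yield the right-hand side of \eqref{eq:Lyapunov_hypoellipticity}.

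As in Lemma \ref{lem:Lyapunov_order_1}, a consistent choice is the polynomial scaling $c = \eps^{21}$, $b = \eps^{20}$, $a = \eps^{16}$, $\delta_v = \eps^{8}$, $\delta_{xv} = \eps^{2}$, $\eta = \eps^{10}$, for some small $\eps = \eps(C_\star, \overline{\kappa}^\text{e}, \overline{\kappa}^\text{o}, \underline{\kappa}^\text{e}, \nu)$; taking $\overline{\kappa}^\text{o}$ small enough with respect to this $\eps$ handles the odd-interaction coupling exactly as before. The main technical obstacle is ensuring that, after the power contributions are absorbed, the remaining dissipation rates in each term ($\|\nabla_v f\|^2$, $\|\nabla_v^2 f\|^2$, $\|\nabla_x f\|^2$, $\|\nabla_x \nabla_v f\|^2$) all remain strictly positive with a common lower bound $\mu$; a posteriori, $t \le 1$ allows one to bound $\mu \EE_0[f]$ on the left in terms of the full good contributions, so that a Lyapunov-type dissipation inequality emerges without the need for an additional $\lambda \GG$ term.
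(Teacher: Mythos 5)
Your overall architecture is the paper's: differentiate $\GG = \EE_0 + \GG^{\hom}$, control the $\EE_0$ part by Lemma \ref{lem:Lyapunov_order_0}, split each time-weighted term into a power contribution and a dynamics contribution, and feed the dynamics contributions with \eqref{eq:dissipation_grad_v}, \eqref{eq:dissipation_grad_xv}, \eqref{eq:dissipation_grad_x} weighted by $at$, $bt^2$, $ct^3$. The handling of the three power contributions (absorbing $a\|\nabla_v f\|^2$ into the microscopic dissipation, $3ct^2\|\nabla_x f\|^2$ into the cross-term dissipation, and Young on the middle term, which is where $b^2\le ac$ enters) is also correct and matches the paper.

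However, there is a genuine gap in your treatment of the coupling terms: the Young parameters in \eqref{eq:dissipation_grad_v} and \eqref{eq:dissipation_grad_xv} must be taken \emph{time-dependent}, $\delta = \delta_v t$ and $\delta = \delta_{xv} t$, which is exactly what the paper does and what you do not. With your constant choice $\delta_v = \eps^8$, $\delta_{xv}=\eps^2$, two absorptions fail as $t\to 0^+$. First, multiplying \eqref{eq:dissipation_grad_v} by $at$ produces the bad term $a t\,\delta_v\,\|\nabla_x f\|^2_{L^2(F_\star)}$, which must be absorbed by the only available $\|\nabla_x f\|^2$ dissipation, of size $\sim b t^2$; the ratio $a\delta_v t/(bt^2)=a\delta_v/(bt)$ blows up as $t\to 0$, so no constant $\delta_v$ works. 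Second, multiplying \eqref{eq:dissipation_grad_xv} by $bt^2$ produces $b t^2 \delta_{xv}\,\|\nabla_x\nabla_v f\|^2_{L^2(F_\star)}$, to be absorbed by the $\sim ct^3$ dissipation of $\|\nabla_x\nabla_v f\|^2$; again $b\delta_{xv}t^2/(ct^3)\to\infty$. This is precisely the new difficulty of the hypoelliptic functional relative to Lemma \ref{lem:Lyapunov_order_1}, where all weights are $O(1)$ and constant $\delta$'s suffice; here the weights $at$, $bt^2$, $ct^3$ degenerate at different rates, so the Young parameters must track $t$. Taking $\delta=\delta_v t$ and $\delta=\delta_{xv} t$ (admissible since $\delta_{xv}t\le\delta_{xv}$ for $t\le 1$) turns these bad terms into $a\delta_v t^2\|\nabla_x f\|^2$ and $b\delta_{xv}t^3\|\nabla_x\nabla_v f\|^2$, which your scaling of $a,b,c,\delta_v,\delta_{xv}$ then absorbs, while the reciprocal terms $\frac{a}{\delta_v}\|\nabla_v f\|^2$ and $\frac{bt}{\delta_{xv}}(\|\nabla_v f\|^2+\|\nabla_v^2 f\|^2)$ remain controllable for $t\le 1$. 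With this single correction the rest of your argument goes through.
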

	
	\begin{proof}
		The comparison \eqref{eq:equiv_reg_Lyapunov_1} follows from the observation that
		$$\left| b t^2 \la \nabla_v f , \nabla_x f \ra_{ L^2(F_\star) } \right| \le \frac{a t}{2} \| \nabla_v f \|^2_{L^2(F_\star)} + \frac{c t^3}{2}\| \nabla_x f \|^2_{L^2(F_\star)} \, .$$
		Combined with Lemma \ref{lem:Lyapunov_order_0}, \eqref{eq:dissipation_grad_v} with  $\delta = \delta_v t$, \eqref{eq:dissipation_grad_xv}  with  $\delta = \delta_{xv} t$ and \eqref{eq:dissipation_grad_x}, one gets the energy estimate for some $\mu = \mu(C_\star, \overline{\kappa}^\text{e}, \overline{\kappa}^\text{o}, \underline{\kappa}^\text{e})$ and  $K = K(C_\star, \overline{\kappa}^\text{e}, \overline{\kappa}^\text{o}, \underline{\kappa}^\text{e})$,
		\begin{align*}
			\frac{\d}{\d t} \GG[f] + K\EE_0[f] \left( \mu - b t^2 \right)
			& + K\| \nabla_v f \|^2_{L^2(F_\star)} \left( \mu - \frac{a}{\delta_v} - \frac{b t}{\delta_{xv}} - c t^3 - 2a \right)  \\
			& + K\| \nabla_x f \|^2_{L^2(F_\star)} \left( \mu b t^2 - a t^2 \delta_v  - c t^3 - c t^2 -3 c t^2 \right) \\
			& + K\| \nabla_v^2 f \|^2_{L^2(F_\star)} \left( \mu a t - \frac{b t}{ \delta_{xv} } \right) \\
			& + K\| \nabla_x \nabla_v f \|^2_{L^2(F_\star)} (\mu c t^3 - b t^3 \delta_{xv} ) \\
			& \le C_{a, b, c, \delta_v, \delta_{xv}} \left( \| \varphi \|_{L^2(F_\star)}^2 + t \| \varphi \|_{L^2(F_\star)}^2  + t^3 \| \nabla_x \varphi \|_{L^2(F_\star)}^2 \right) \, .
		\end{align*}
		The same choice of parameters as in the proof of Lemma \ref{lem:Lyapunov_order_1} allows to conclude.
	\end{proof}
	
	\begin{prop}[$H^1$--hypoellipticity]
		\label{prop:H1_linear_hypoellipticity}
		Under the smallness assumptions
		$$\underline{\kappa}^\text{e} < \delta^\text{e}(C_\star,\theta) \qquad \text{and} \qquad \overline{\kappa}^\text{o}< \delta^\text{o}(C_\star, \overline{\kappa}^\text{e}, \underline{\kappa}^\text{e}, \nu ) \, ,$$
		there exists $C$ depending on $C_\star, \overline{\kappa}^\text{e}, \overline{\kappa}^\text{o}, \underline{\kappa}^\text{e}$ and $\nu$ such that any solution to \eqref{eq:linearized_source} satisfies the short time regularization estimates:
		\begin{align*}
			\sup_{0 < t \le 1}  & \left\{ t^3 \| \nabla_x f(t) \|^2_{L^2(F_\star)} + t \| \nabla_v f(t) \|^2_{L^2(F_\star)} \right\} \\
			& + \int_0^1 \left\{ t^{2 } \| \nabla_x f(t) \|^2_{L^2(F_\star)} + \| \nabla_v f(t) \|_{L^2(F_\star)}^2 \right\} \d t \\
			& + \int_0^1 \left\{ t^3 \|  \nabla_x \nabla_v f(t) \|^2_{L^2(F_\star)} + t \| \nabla_v^2 f(t) \|^2_{L^2(F_\star)} \right\} \d t \\
			& \le C\left( \| f_\ini \|^2_{L^2(F_\star)} + \int_0^1 \left( \| \varphi(t) \|_{L^2(F_\star)}^2 + t \| \nabla_v \varphi(t) \|^2_{L^2(F_\star)} + t^3 \| \nabla_x \varphi(t) \|^2_{L^2(F_\star)} \right) \d t \right) \, .
		\end{align*}
	\end{prop}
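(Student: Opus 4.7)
The plan is to integrate the Lyapunov inequality \eqref{eq:Lyapunov_hypoellipticity} from Lemma \ref{lem:Lyapunov_regularization_order_1} between $0$ and any $t \in (0,1]$, and then to read off the announced pointwise and integrated estimates directly from the lower bounds on $\GG[t, f]$ and from the dissipation terms already present on the left-hand side. No new estimate is needed — this proposition is essentially a packaging of the differential inequality of the previous lemma.

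First, I would handle the initial datum. Every term in $\GG^{\hom}[s, f]$ carries a strictly positive power of $s$, so $\GG^{\hom}[0, f_\ini] = 0$ and therefore $\GG[0, f_\ini] = \EE_0[f_\ini]$. By Lemma \ref{lem:Lyapunov_order_0}, $\EE_0[f_\ini] \le \tfrac{3}{4}\Nt{f_\ini}^2$, and by Lemma \ref{lem:Hilbert_space_structure} the twisted norm $\Nt{\cdot}$ is equivalent to $\|\cdot\|_{L^2(F_\star)}$. This is the reason why no regularity on $f_\ini$ is required on the right-hand side.

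Second, integrating \eqref{eq:Lyapunov_hypoellipticity} on $[0,t]$ yields, for $t \in (0,1]$,
\begin{align*}
\GG[t, f(t)] + \mu \int_0^t \Bigl( & \|\nabla_v f(s)\|^2_{L^2(F_\star)} + s\|\nabla_v^2 f(s)\|^2_{L^2(F_\star)} \\
& + s^2\|\nabla_x f(s)\|^2_{L^2(F_\star)} + s^3\|\nabla_x\nabla_v f(s)\|^2_{L^2(F_\star)}\Bigr)\,\d s \\
\le \GG[0,f_\ini] + C\int_0^t \Bigl( & \|\varphi(s)\|^2_{L^2(F_\star)} + s\|\nabla_v\varphi(s)\|^2_{L^2(F_\star)} + s^3\|\nabla_x\varphi(s)\|^2_{L^2(F_\star)}\Bigr)\,\d s,
\end{align*}
where I have dropped the non-negative $\EE_0[f(s)]$ contribution from the dissipation term. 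The four integrals on the left are exactly the four $\int_0^1$ terms appearing in the statement. For the pointwise part, the lower bound \eqref{eq:equiv_reg_Lyapunov_1} of Lemma \ref{lem:Lyapunov_regularization_order_1} gives
\[
\GG[t, f(t)] \ge \GG^{\hom}[t, f(t)] \ge \tfrac{1}{2}\bigl(at\|\nabla_v f(t)\|^2_{L^2(F_\star)} + ct^3\|\nabla_x f(t)\|^2_{L^2(F_\star)}\bigr),
\]
so that taking the supremum over $t \in (0,1]$ in the previous display recovers the supremum part of the announced estimate.

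Since the argument is purely an integration of an already-established differential inequality, I do not expect any serious obstacle; the only points of care are the vanishing of $\GG^{\hom}$ at $t=0$ (which converts the initial datum into an $L^2$ norm) and the use of the parameter condition $b^2 \le ac$ enforced in Lemma \ref{lem:Lyapunov_regularization_order_1} that underpins \eqref{eq:equiv_reg_Lyapunov_1}.
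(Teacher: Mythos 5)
Your proposal is correct and is exactly the argument the paper intends: the proposition is stated as a direct consequence of Lemma \ref{lem:Lyapunov_regularization_order_1}, obtained by integrating \eqref{eq:Lyapunov_hypoellipticity} on $(0,t]$, using $\GG^{\hom}[0,f_\ini]=0$ together with the equivalence of $\EE_0$ with the $L^2(F_\star)$--norm for the initial datum, and reading the pointwise bounds from \eqref{eq:equiv_reg_Lyapunov_1} and the integrated bounds from the dissipation terms. No discrepancy with the paper's route.
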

	
	\begin{rem}
		Note that the pointwise estimate imply, in the case of spatial derivatives (and $\varphi = 0$), that
		$$\left\| t^2 \, \nabla_x f \right\|_{L^r_t L^2_{x,v} (F_\star)} \le C_r \| f_\ini \|_{L^2_{x, v}}, \quad 1 \le r < 2 \, ,$$
		however, the integral estimate from Proposition \ref{prop:H1_linear_hypoellipticity} indicates that this holds for the endpoint $r = 2$ as well.
	\end{rem}
	
	Denote for $\lambda$ the smallest rate of Propositions \ref{prop:H1_linear_hypocoercivity} and \ref{prop:H1_linear_hypoellipticity}
	$$w_\sigma(t) := e^{\lambda t} \min\{ 1, t\}^{ \frac{\sigma}{2} } , \qquad \sigma \in [0, 3] \, .$$
	The following is a combination of Proposition \ref{prop:H1_linear_hypocoercivity} for $t \in (0, 1]$ and Proposition \ref{prop:H1_linear_hypoellipticity} for $t \ge 1$.
	
	\begin{prop}\label{prop:H1_linear_hypoellipticity_hypocoercivity}
		Under the smallness assumptions
		$$\underline{\kappa}^\text{e} < \delta^\text{e}(C_\star,\theta) \qquad \text{and} \qquad \overline{\kappa}^\text{o}< \delta^\text{o}(C_\star, \overline{\kappa}^\text{e}, \underline{\kappa}^\text{e}, \nu ) \, ,$$
		there exists $C$ and $\lambda$ depending on $C_\star, \overline{\kappa}^\text{e}, \overline{\kappa}^\text{o}, \underline{\kappa}^\text{e}$ and $\nu$ such that any solution to \eqref{eq:linearized_source} satisfies
		\begin{align*}
			\sup_{ t > 0}  \Big\{ w_3(t)^2 \| & \nabla_x f(t) \|^2_{L^2(F_\star)} + w_1(t)^2 \| \nabla_v f(t) \|^2_{L^2(F_\star)} + w_0(t)^2 \| f(t) \|_{L^2(F_\star)}^2 \Big\} \\
			& + \int_0^\infty \left\{ w_2(t)^2 \| \nabla_x f(t) \|^2_{L^2(F_\star)} + w_0(t)^2 \| \nabla_v f(t) \|_{L^2(F_\star)}^2 \right\} \d t \\
			\le C \bigg(  \| f_\ini & \|^2_{L^2(F_\star)} + \int_0^\infty \Big( w_0(t)^2 \| \varphi(t) \|_{L^2(F_\star)}^2
			+ w_1(t)^2 \| \nabla_v \varphi(t) \|^2_{L^2(F_\star)} \\
			& + w_3(t)^2 \| \nabla_x \varphi(t) \|^2_{L^2(F_\star)} \Big) \d t \bigg) \, .
		\end{align*}
	\end{prop}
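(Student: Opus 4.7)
The plan is to prove this estimate by decomposing the time axis into $(0,1]$ and $[1,\infty)$ and applying the two previous propositions separately. The weights $w_\sigma$ are designed precisely for this: on $(0,1]$ they reduce to $e^{\lambda t}\,t^{\sigma/2}$ (with $e^{\lambda t}$ a harmless bounded factor), which are exactly the time-weights appearing in the hypoelliptic estimate of Proposition \ref{prop:H1_linear_hypoellipticity}; on $[1,\infty)$ they reduce to the pure exponential $e^{\lambda t}$ appearing in the hypocoercivity estimate of Proposition \ref{prop:H1_linear_hypocoercivity}. Fix $\lambda>0$ no larger than the smaller of the two decay rates given by those propositions.

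For the short-time part $t\in(0,1]$, I would apply Proposition \ref{prop:H1_linear_hypoellipticity} directly to $f$ with initial data $f_\ini\in L^2(F_\star)$. Since $e^{2\lambda t}\leq e^{2\lambda}$ on this interval, multiplying through by $e^{2\lambda}$ converts each $t^\sigma$ weight on the left-hand and right-hand sides into $w_\sigma(t)^2$ (up to the absolute constant $e^{2\lambda}$), yielding the bound on $(0,1]$. In particular this gives
\[
\|f(1)\|_{H^1(F_\star)}^2 \le C\Bigl(\|f_\ini\|_{L^2(F_\star)}^2 + \int_0^1\bigl(\|\varphi(t)\|_{L^2(F_\star)}^2 + t\|\nabla_v\varphi(t)\|_{L^2(F_\star)}^2 + t^3\|\nabla_x\varphi(t)\|_{L^2(F_\star)}^2\bigr)\d t\Bigr),
\]
which is precisely the datum needed to initialize the long-time analysis.

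For the long-time part $t\ge 1$, I would apply Proposition \ref{prop:H1_linear_hypocoercivity} to the time-shifted solution $g(s):=f(s+1)$, which solves the same equation \eqref{eq:linearized_source} with source $\varphi(\cdot+1)$ and initial datum $g(0)=f(1)\in H^1(F_\star)$. This provides, after reinserting $t=s+1$ and inserting a factor $e^{2\lambda}$,
\[
\sup_{t\ge 1} e^{2\lambda t}\|f(t)\|_{H^1(F_\star)}^2 + \int_1^\infty e^{2\lambda t}\|\nabla_v f(t)\|_{H^1(F_\star)}^2\,\d t \le C\Bigl(\|f(1)\|_{H^1(F_\star)}^2 + \int_1^\infty e^{2\lambda t}\|\varphi(t)\|_{H^1(F_\star)}^2\,\d t\Bigr).
\]
On $[1,\infty)$ all weights $w_\sigma$ coincide with $e^{\lambda t}$, so this is exactly the $[1,\infty)$--contribution of the target inequality, and the initial term $\|f(1)\|_{H^1(F_\star)}^2$ is absorbed by the short-time bound derived in the previous paragraph.

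Finally I would add the two regimes. The suprema over $(0,1]$ and $[1,\infty)$ combine into a single supremum over $(0,\infty)$ since $w_\sigma$ is continuous and of matching size at $t=1$; the dissipation integrals split additively over $(0,1)$ and $(1,\infty)$; and the source terms on the right-hand side likewise assemble into the single integral over $(0,\infty)$ stated in the proposition, using that the short-time source weights $w_0^2, w_1^2, w_3^2$ dominate what Proposition \ref{prop:H1_linear_hypoellipticity} requires. There is no genuine obstacle here, as the weights have been chosen to make the matching automatic; the only point requiring care is ensuring that the constants and the decay rate $\lambda$ are chosen uniformly (taking the minimum of the two rates) and that the contribution of $\|f(1)\|_{H^1(F_\star)}^2$ is indeed controlled by the hypoelliptic estimate applied up to $t=1$, for which the endpoint integrability of the time weights on $(0,1]$ is essential.
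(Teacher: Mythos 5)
Your proposal is correct and is exactly the argument the paper intends: the result is stated there as a one-line combination of the hypoelliptic estimate on $(0,1]$ and the $H^1$ hypocoercive estimate on $[1,\infty)$ (the paper's sentence appears to transpose the two intervals, but the intended gluing is the one you carry out), with the hypoelliptic bound at $t=1$ supplying the $H^1(F_\star)$ datum for the long-time part. The only detail worth adding is that the $w_0(t)^2\|f(t)\|^2_{L^2(F_\star)}$ term in the supremum over $(0,1]$ is not explicitly listed in Proposition \ref{prop:H1_linear_hypoellipticity} and comes from the $\EE_0$ part of the functional $\GG$ (equivalently from Proposition \ref{prop:L2_linear_hypocoercivity}), which holds under the same smallness assumptions.
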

	
	From here one could interpolate the results of Proposition~\ref{prop:H1_linear_hypoellipticity_hypocoercivity}	with those of Propositions~\ref{prop:H1_linear_hypocoercivity} to quantify the regularization and decay of a solution to the nonlinear equation \eqref{eq:SCVFP} with an initial data in $H^{s}_{x,v}(F_\star)$, for large enough $s\in[0,1]$. This is even possible under a slightly weaker assumption than Assumption~\ref{assu:confinement} on the confining potential, namely \eqref{eq:smoothness_V} holding only for some given $\eps$. At the price of \eqref{eq:smoothness_V} holding for $\eps$ arbitrarily small, we will be able to lower the regularity assumptions on the initial data to $H^{s}_{x}L^2_v(F_\star)$.

	\subsection{$H^1_{x} L^2_{v}$ hypocoercivity}
	\label{scn:H1xL2v_hypocoercivity}
	
	In this section, we apply the Dolbeault-Mouhot-Schmeiser $L^2$--hypocoercivity techniques, in the linearized free energy norm, to a first order $x$ derivative of the solution. More precisely we shall consider $\nabla_x^*f$ instead of $\nabla_xf$ as only the former belongs to $\HH_0$.
	
	\begin{lem}\label{lem:estimate_twisted_gradx}
		Let $f$ be a solution to \eqref{eq:linearized_source}, its twisted spatial gradient $\nabla_x^*f$ satisfies the equation
		\begin{align*}
			\partial_t (\nabla_x^* f) + (T - L) \left(\nabla_x^* f \right) = -(v\cdot\nabla_x)\psi_{\nabla_x^*f}^{\text{o}}+\nabla_v^* \nabla_x^* \varphi +X_1 + X_2 \, ,
		\end{align*}
		where we denoted
		\begin{equation*}
			X_1 := \left(\nabla_x^2 V_\star\right) \nabla_v^* f, \qquad X_2 := - \nabla_xV_\star \left( v \cdot \nabla_x \psi_f \right).
		\end{equation*}
		Furthermore, under the smallness assumptions
		$$\underline{\kappa}^\text{e} < \delta^{\text{e}}(C_\star,\theta)\quad \text{and} \quad \overline{\kappa}^\text{o}< \delta^\text{o}(C_\star, \overline{\kappa}^\text{e}, \underline{\kappa}^\text{e}, \nu),$$
		there exists $\lambda, \mu$ and $C$ depending on $C_\star, \underline{\kappa}^\text{e}, \overline{\kappa}^\text{e}, \overline{\kappa}^\text{o}$ and $\nu$ such that $f$ satisfies the differential inequality
		\begin{equation*}
			\begin{split}
				\frac{\d}{\d t} \EE_0\left[ \nabla_x^* f \right] & + \lambda \, \EE_0\left[ \nabla_x^* f \right]+ \mu \| \nabla_v \nabla_x^*  f \|^2_{L^2(F_\star)} \\
				& \le C \left( \| \varphi \|_{L^2(F_\star)}^2 + \| \nabla_x \varphi \|^2_{L^2(F_\star)} + \| f \|_{L^2(F_\star)}^2 +  \| \nabla_v f \|_{L^2(F_\star)}^2 \right) \, .
			\end{split}
		\end{equation*}
	\end{lem}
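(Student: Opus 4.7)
The plan has two parts: derive the PDE for $\nabla_x^* f$ by a direct commutator computation, and then replay the Dolbeault--Mouhot--Schmeiser argument of Lemma~\ref{lem:Lyapunov_order_0} on this shifted unknown, controlling the new source terms via the weighted Poincaré inequalities of Section~\ref{scn:functional_inequalities_steady_state}.

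For the equation, I would apply $\nabla_x^*$ componentwise to $(\partial_t + \Lambda) f + v \cdot \nabla_x \psi_f = \nabla_v^* \varphi$. The Fokker--Planck piece $\nu \nabla_v^* \nabla_v$ commutes with $\nabla_x^*$, while a short computation of $[\nabla_x^*, v \cdot \nabla_x]$ and $[\nabla_x^*, \nabla_x V_\star \cdot \nabla_v]$ combined with $\nabla_v^* = -\nabla_v + v$ produces the clean commutator $[\nabla_x^*, \Lambda] = -(\nabla_x^2 V_\star)\,\nabla_v^*$, which, once moved to the right-hand side, accounts for $X_1$. For the interaction term, the key observation is that integration by parts against $F_\star$ yields $\rho_{\nabla_x^* f} = -\nabla_x \rho_f$, and therefore $\psi_{\nabla_x^* f}^\alpha = -\nabla_x \psi_f^\alpha$ for $\alpha \in \{\text{e}, \text{o}\}$. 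Combined with the product-rule identity $\nabla_x^*(v \cdot \nabla_x \psi_f) = v \cdot \nabla_x \psi_{\nabla_x^* f} + \nabla_x V_\star \,(v \cdot \nabla_x \psi_f)$, this splits the interaction into the intrinsic odd source $-v \cdot \nabla_x \psi_{\nabla_x^* f}^\text{o}$ for the shifted unknown, an extra drift $v \cdot \nabla_x \psi_{\nabla_x^* f}^\text{e}$ that merges into the modified transport $T$, and the weighted remainder $X_2$.

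For the Lyapunov estimate, the key points are that $\nabla_x^* f \in \HH_0$, since $\int \rho_{\nabla_x^* f}\,\d x = 0$, and that the structural Lemmas~\ref{lem:Hilbert_space_structure}--\ref{lem:AT_bounded} apply verbatim because they concern the operators $T$, $L$, $A$ and not the particular unknown. Running the dissipation computation of Lemma~\ref{lem:Lyapunov_order_0} produces the usual negative contributions together with source brackets $\lla \nabla_x^* f, S_i \rra$ and $\varepsilon \lla A(\nabla_x^* f), S_i \rra$ for $S_i \in \{\nabla_v^* \nabla_x^* \varphi,\, X_1,\, X_2\}$. The $\varphi$-term is handled by pushing $\nabla_v^*$ onto $\nabla_x^* f$, splitting $\nabla_x^* \varphi = -\nabla_x \varphi + \nabla_x V_\star\, \varphi$, and using~\eqref{eq:Poincare_gV_rho_star} fibrewise in $v$ to dominate the weighted piece by $\|\varphi\|_{L^2(F_\star)}^2 + \|\nabla_x \varphi\|_{L^2(F_\star)}^2$. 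The term $X_2$ is estimated by Hölder with the split $\tfrac{1}{2} = \tfrac{1}{q} + \tfrac{1}{s}$, the bound $\|\nabla_x \psi_f\|_{L^q} \le C_\star \overline{\kappa}\,\|\Pi f\|_{L^2(F_\star)}$ from~\eqref{eq:upper_bound_grad_psi_weight}, and the moment bound~\eqref{eq:moment_rho_star}, yielding control purely by $\|f\|_{L^2(F_\star)}^2$. The $A$-twisted counterparts are harmless thanks to the operator-norm bound $\Nt{Ag} \le \tfrac{1}{2}\Nt{g}$ from Lemma~\ref{lem:A_bounded}.

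The main obstacle is the term $X_1 = (\nabla_x^2 V_\star) \nabla_v^* f$. Using~\eqref{eq:hess_grad_V_star} with $\varepsilon$ free, one writes $|X_1| \le \varepsilon |\nabla V_\star|\,|\nabla_v^* f| + C_\varepsilon |\nabla_v^* f|$, and then~\eqref{eq:Poincare_gV_rho_star} applied in $x$ to a fibrewise $v$-integration of $(\nabla_v^* f)^2$ trades the weight $|\nabla V_\star|$ for $\|\nabla_x \nabla_v^* f\|_{L^2(F_\star)}$, which via the commutativity $\nabla_x \nabla_v^* = \nabla_v^* \nabla_x$ and~\eqref{eq:charac_norm_dvstar} is absorbed by the hypocoercive dissipation $\|\nabla_v \nabla_x^* f\|_{L^2(F_\star)}^2$ on the left-hand side, provided $\varepsilon$ is chosen sufficiently small. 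The remaining piece $C_\varepsilon\|\nabla_v^* f\|_{L^2(F_\star)}^2$ equals $C_\varepsilon(\|f\|_{L^2(F_\star)}^2 + \|\nabla_v f\|_{L^2(F_\star)}^2)$ by~\eqref{eq:charac_norm_dvstar}, matching the right-hand side of the statement. Collecting all estimates and choosing $\varepsilon$, $\delta$ and then $\overline{\kappa}^\text{o}$ small exactly as in Lemma~\ref{lem:Lyapunov_order_0} closes the estimate.
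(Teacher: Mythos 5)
Your proposal is correct and follows essentially the same route as the paper: the same commutator computation (the paper phrases it via $[\nabla_x^*,T]$, $[\nabla_x^*,L]=0$ and $[\nabla_x^*, v\cdot\nabla_x\psi^{\text{o}}_{(\cdot)}]$, you via $[\nabla_x^*,\Lambda]$ plus the identity $\psi_{\nabla_x^*f}=-\nabla_x\psi_f$, which are equivalent), followed by rerunning the $\EE_0$-machinery of Lemma~\ref{lem:Lyapunov_order_0} on $\nabla_x^*f$ and controlling $X_1$ via \eqref{eq:hess_grad_V_star}/\eqref{eq:poincare_hessianVstar} with absorption into the dissipation and into $\lambda\,\EE_0[\nabla_x^*f]$, and $X_2$ via Hölder with $\tfrac12=\tfrac1q+\tfrac1s$, \eqref{eq:upper_bound_grad_psi_weight} and \eqref{eq:moment_rho_star}. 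The only cosmetic caveat is that applying \eqref{eq:Poincare_gV_rho_star} fibrewise to $\varphi$ requires first subtracting the $\rho_\star$-mean, which only adds a multiple of $\|\varphi\|_{L^2(F_\star)}^2$ already present on the right-hand side.
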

	
	\begin{proof}
		The equation satisfied by $\nabla^*_x f$ is obtained from the identities
		\begin{gather*}
			\left[ \nabla^*_x, T \right]f = - \left(\nabla_x^2 V_\star\right) \nabla_v^* + \left(\nabla_x V_\star\right) \left(v \cdot \nabla_x  \psi^\text{e}_{f} \right), \\
			\left[\nabla_{x}^*, (v \cdot \nabla_x \psi^\text{o}_{(\cdot)}) \right]f= \left(\nabla_x V_\star\right) \left(v \cdot \nabla_x  \psi^\text{o}_{f} \right) \quad \text{and} \quad \left[ \nabla_x^*, L \right] = 0 \, .
		\end{gather*}
		Since up to the remainder terms $X_1$ and $X_2$, 
		$\nabla_x^*f$ follows the same equation as $f$, with the source $\nabla_x^*\varphi$, one can follow the proof of Lemma \ref{lem:Lyapunov_order_0} to get (for the same constants) $C, \lambda$ and $\mu$ depending on $\underline{\kappa}^\text{e}, \overline{\kappa}^\text{o}, \overline{\kappa}^\text{e}, C_\star$ and $\nu$ that
		\begin{align*}
			\frac{\d}{\d t} \EE_0\left[ \nabla_x^* f \right] + & \lambda \EE_0\left[ \nabla_x^* f \right] + \mu \| \nabla_x^* \nabla_v  f \|^2_{L^2(F_\star)} \\
			& \le C \left( \| \nabla_x^* \varphi \|^2_{L^2(F_\star)}  + Y_1 + Y_2 \right) \, ,
		\end{align*}
		where, using the boundedness of the operator $A$ and the norm equivalence between $\|\cdot\|$ and $\Nt{\cdot}$,  the remainder terms can be estimated by 
		\begin{align*}
			Y_1&\lesssim \lla\nabla_x^*f, X_1\rra \lesssim \|\nabla_x^*f\|_{L^2(F_\star)}\|(\nabla_x^2 V_\star) \nabla_v^* f\|_{L^2(F_\star)},\\
			Y_2&\lesssim \lla\nabla_x^*f, X_2\rra\lesssim \|\nabla_x^*f\|_{L^2(F_\star)}\|(\nabla V_\star) \left( v \cdot \nabla_x \psi_f \right)\|_{L^2(F_\star)}.
		\end{align*}	
		Then using Young's inequality and  \eqref{eq:poincare_hessianVstar}, one obtains
		\[
		Y_1\lesssim \delta\|\nabla_x^*f\|_{L^2(F_\star)}^2 + \frac{1}{\delta}\left( \eps\|\nabla_x\nabla_v^* f\|^2_{L^2(F_\star)} + C_\eps\|\nabla_v^* f\|^2_{L^2(F_\star)}\right).
		\]	
		Then up to changing the constants, we have by the equivalence \eqref{eq:twisted_spatial_H1_equivalence} and \eqref{eq:charac_norm_dvstar}
		\begin{multline*}
			Y_1\lesssim \delta\|\nabla_x^*f\|_{L^2(F_\star)}^2 + \frac{\eps}{\delta} \left(\|\nabla_x^* f\|^2_{L^2(F_\star)} +\|\nabla_x^*\nabla_v f\|^2_{L^2(F_\star)}\right)\\ + \frac{C_\eps}{\delta}\left(\|f\|^2_{L^2(F_\star)}+\|\nabla_v f\|^2_{L^2(F_\star)}\right).
		\end{multline*}
		For the second remainder term one can use Hölder's inequality for some $\frac{1}{2} = \frac{1}{q} + \frac{1}{s}$ followed by \eqref{eq:upper_bound_grad_psi_weight}:
		$$
		\begin{aligned}Y_2 &\lesssim \delta\|\nabla_x^*f\|_{L^2(F_\star)}^2 + \frac{1}{\delta}\| | \nabla_x V_\star |^2 \rho_\star \|_{L^{ \frac{s}{2} } } \| \nabla_x \psi_f \|^2_{L^q},\\
			&\lesssim  \delta\|\nabla_x^*f\|_{L^2(F_\star)}^2 +  \frac{\overline{\kappa}}{\delta}\| \Pi f \|^2_{L^2(F_\star)} \, .\end{aligned}$$
		Putting these estimates together with $\eps = \delta^2$, and using the fact that $\EE_0$ is equivalent to $\| \cdot \|_{L^2(F_\star)}$, we obtain 
		\begin{align*}
			\frac{\d}{\d t} \EE_0\left[ \nabla_x^* f \right] + & ( \lambda - C (\delta+\delta^2)) \EE_0\left[ \nabla_x^* f \right] + ( \mu - C \delta)\| \nabla_x^* \nabla_v  f \|^2_{L^2(F_\star)} \\
			& \le C_\delta \left( \| \nabla_x^* \varphi \|^2_{L^2(F_\star)}  + \overline{\kappa} \| \Pi f \|^2_{ L^2(F_\star) } + \|f\|^2_{ L^2(F_\star) }+ \|\nabla_vf\|^2_{ L^2(F_\star) } \right) \, ,
		\end{align*}
		thus, taking $\delta$ small enough, we conclude the proof.
	\end{proof}

	Define $\EE_{1,0}$ for some small $\eps \in (0, 1)$ as
	$$\EE_{1,0}[f] = \EE_0[f] + \eps \, \EE_0[\nabla_x^* f]\, .$$
	In virtue of \eqref{eq:twisted_spatial_H1_equivalence}, it is clear that $\EE_{1,0}$ defines an equivalent norm to $H^1_x L^2_v (F_\star)$, thus, the following hypocoercivity result follows from Lemmas \ref{lem:Lyapunov_order_0} and \ref{lem:estimate_twisted_gradx} for a small enough choice of $\eps$.
	
	\begin{prop}[$H^1_x L^2_v$--hypocoercivity]
		\label{prop:H1x_linear_hypocoercivity}
		Under the smallness assumptions
		$$\underline{\kappa}^\text{e} < \delta^\text{e}(C_\star,\theta) \qquad \text{and} \qquad \overline{\kappa}^\text{o}< \delta^\text{o}(C_\star, \overline{\kappa}^\text{e}, \underline{\kappa}^\text{e}, \nu )$$
		there exists $\lambda$ and $C$ depending on $C_\star, \overline{\kappa}^\text{e}, \overline{\kappa}^\text{o}, \underline{\kappa}^\text{e}$ and $\nu$ such that any solution to \eqref{eq:linearized_source} satisfies the differential inequality
		\begin{align*}
			\sup_{t \ge 0} e^{2 \lambda t} \| f(t) \|_{H^1_x L^2_v (F_\star)}^2 + & \int_0^\infty e^{2 \lambda t} \| \nabla_v f(t) \|_{H^1_x L^2_v (F_\star)}^2 \d t \\
			& \le C \left( \| f_\ini \|_{H^1_x L^2_v (F_\star)}^2 + \int_0^\infty e^{2 \lambda t} \|\varphi(t) \|_{H^1_x L^2_v (F_\star)}^2 \d t \right) \, .
		\end{align*}
	\end{prop}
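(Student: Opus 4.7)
The plan is to use $\EE_{1,0}$ as a Lyapunov functional and establish two properties: (i) $\EE_{1,0}$ is equivalent to the squared $H^1_x L^2_v(F_\star)$--norm and (ii) along solutions to \eqref{eq:linearized_source}, $\EE_{1,0}$ satisfies a differential inequality of hypocoercive type with source $\varphi$. Gr\"onwall's argument then delivers the claim.

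For the norm equivalence, I would first observe that $\EE_0[g] \sim \|g\|_{L^2(F_\star)}^2$ whenever $g \in \HH_0$, which follows from the boundedness of $A$ established in Lemma~\ref{lem:A_bounded} combined with the equivalence $\Nt{\cdot} \sim \|\cdot\|_{L^2(F_\star)}$ from Lemma~\ref{lem:Hilbert_space_structure}. Applying this to both $f$ and the twisted derivative $\nabla_x^* f$ (which lies in $\HH_0$ since $\int (\nabla_x^* f)\, F_\star\, \d x\, \d v = 0$), one obtains $\EE_{1,0}[f] \sim \|f\|_{L^2(F_\star)}^2 + \eps \|\nabla_x^* f\|_{L^2(F_\star)}^2$. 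The identity \eqref{eq:twisted_spatial_H1_equivalence} then upgrades this to an equivalence with the $H^1_x L^2_v(F_\star)$--norm, with implicit constants depending on $\eps$ and $C_\star$.

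For the dissipation, I would differentiate $\EE_{1,0}[f]$ and add the estimates of Lemma~\ref{lem:Lyapunov_order_0} (applied to $f$, yielding a dissipation rate $\lambda_0$ on $\EE_0[f]$ plus a coercive $\mu_0 \|\nabla_v f\|_{L^2(F_\star)}^2$ term, with source $\|\varphi\|_{L^2(F_\star)}^2$) and $\eps$ times the estimate of Lemma~\ref{lem:estimate_twisted_gradx} (providing the same structural dissipation on $\EE_0[\nabla_x^* f]$, with an additional remainder $\|f\|_{L^2(F_\star)}^2 + \|\nabla_v f\|_{L^2(F_\star)}^2$ in the source). The crux of the argument, and the only real obstacle, is to absorb this remainder into the dissipation provided by the first lemma: since it carries a prefactor $\eps$ while the rates $\lambda_0, \mu_0$ are independent of $\eps$, choosing $\eps$ sufficiently small (in terms of $C_\star, \overline{\kappa}^\text{e}, \overline{\kappa}^\text{o}, \underline{\kappa}^\text{e}$ and $\nu$) makes the absorption work and produces
$$\frac{\d}{\d t} \EE_{1,0}[f] + \tilde{\lambda}\, \EE_{1,0}[f] + \tilde{\mu}\, \|\nabla_v f\|_{H^1_x L^2_v(F_\star)}^2 \le \tilde{C}\, \|\varphi\|_{H^1_x L^2_v(F_\star)}^2 \, .$$

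The conclusion is then routine: multiplying by $e^{\tilde{\lambda} t}$, integrating from $0$ to $\infty$, and translating $\EE_{1,0}$ back to the $H^1_x L^2_v(F_\star)$--norm via step (i) delivers both the supremum bound and the integrated dissipation estimate announced in Proposition~\ref{prop:H1x_linear_hypocoercivity}, for any $\lambda < \tilde{\lambda}$. I do not anticipate any analytical difficulty beyond the absorption of remainder terms, since all the structural hypocoercive work has already been carried out in Lemmas~\ref{lem:Lyapunov_order_0} and~\ref{lem:estimate_twisted_gradx}.
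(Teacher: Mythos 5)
Your proposal is correct and follows essentially the same route as the paper: the authors likewise define $\EE_{1,0}[f] = \EE_0[f] + \eps\,\EE_0[\nabla_x^* f]$, invoke \eqref{eq:twisted_spatial_H1_equivalence} for the norm equivalence, and combine Lemma~\ref{lem:Lyapunov_order_0} with Lemma~\ref{lem:estimate_twisted_gradx}, choosing $\eps$ small enough so that the remainder $\|f\|^2_{L^2(F_\star)} + \|\nabla_v f\|^2_{L^2(F_\star)}$ is absorbed into the dissipation of the zeroth-order functional. Your write-up actually spells out the absorption mechanism more explicitly than the paper, which leaves it implicit.
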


	\section{Non-linear VFP}
	\label{scn:nonlinear}
	
	Now we turn to the proof of our main result Theorem \ref{thm:stability}. First, based on the analysis of the linearized equation, we introduce norms which will encompass the gain of regularity of the solution and the exponential decay towards equilibrium. Then we turn to the estimate of the nonlinear quadratic contribution 
	\[
	\varphi = f\nabla\psi_f,
	\]
	in these functional spaces. From there we prove our main results of perturbative well-posedness and asymptotic stability using a contraction argument.
	
	\subsection{Functional setting}	
	\label{scn:functional_setting_interpolation}
	
	As in the previous section, we consider the time weights
	$$w_\sigma(t) = e^{\lambda t} \min\{ 1, t \}^{ \frac{\sigma}{2} }, \qquad \sigma \in [0, 3]$$
	and, for any $s\in [0, 1]$, we introduce the norms
	\begin{align*}
		\| \varphi \|_{\HH^s}^2 = \int_0^\infty \Big( w_{1-s}(t)^2 \| \varphi(t) \|_{L^2_{x, v}(F_\star) }^2 & +
		w_{1-s}(t)^2 \| \varphi(t)\|_{L^2_x H^{1-s}_v(F_\star)}^2 \\
		& + w_{3 (1-s)}(t)^2\| \varphi(t) \|_{H^1_x L^2_v(F_\star)}^2 \Big) \, \d t
	\end{align*}
	as well as
	\begin{align*}
		\| f \|_{\bXX^s}^2 := & \sup_{t > 0} \Big( w_0(t)^2 \| f(t) \|_{H^s_x L^2_v (F_\star) }
		+ w_{3 (1-s)}(t)^2 \| f(t) \|_{ H^{1}_x L^2_v (F_\star) }^2 \\
		& \qquad \qquad + w_{1-s}(t)^2 \| f(t) \|_{ L^2_x H^{1-s}_v (F_\star) }^2 \Big)
		+ \| f \|_{\bHH^s}^2
	\end{align*}
	where we denoted
	$$\| f \|^2_{ \bHH^s} := \int_0^\infty \Big( w_{2 (1-s)}(t)^2 \| f(t) \|_{ H^{1}_x L^2_v (F_\star) }^2
	+ w_0(t)^2 \| f(t) \|_{L^2_x H^{1-s}_v (F_\star) }^2 \Big) \, \d t \, .$$
	We can now reformulate Proposition \ref{prop:H1x_linear_hypocoercivity} and Proposition~\ref{prop:H1_linear_hypoellipticity}, by the fact that 
	any solution $f$ to \eqref{eq:linearized_source} satisfies 
	$$\| f \|_{\bXX^s} \le C \| (f_\ini, \varphi) \|_{H^s_x L^2_v(F_\star) \times \HH^s} \, \quad\text{for }s=0\text{ or }1 \, .$$
	It is possible to extend these bounds to any $s \in [0,1]$. Indeed, according to \cite[Theorem 1.1]{mccarthy}, this follows from the fact that $\HH^s$ (resp. $\bHH^s$) is the geometric interpolation of order $s \in [0, 1]$ between $\HH^0$ and $\HH^1$ (resp. $\bHH^0$ and $\bHH^1$). This can be seen by writing
	$$\| \varphi \|_{\HH^{1-s}} = \| ( \varphi, \varphi, \varphi ) \|_{ [ \HH ]^{s}} = \| A^s ( \varphi, \varphi, \varphi ) \|_{ [ \HH ]^0}$$
	where we denoted the Hilbert norm $[\HH]^0$
	$$\| (\varphi_1, \varphi_2, \varphi_3) \|_{ [\HH]^0 } = \| \varphi_1 \|_{L^2_{t,x,v} (F_\star e^{2 \lambda t}) } + \| \varphi_2 \|_{L^2_{t,x,v} (F_\star e^{2 \lambda t}) } + \| \varphi_3 \|_{L^2_{t, v} H^1_x (F_\star e^{2 \lambda t}) }$$
	and the operator $A$
	$$A( \varphi_1, \varphi_2, \varphi_3 ) = \left( \varphi_1 , \min\{1, t\} (\nabla_v^* \nabla_v)^{1/2} \varphi_2 , \min\{1, t\}^3 \varphi_3 \right) \, .$$
	Arguing similarly for $\bHH^s$, one gets by interpolation using \cite[Theorem 1.1]{mccarthy}
	$$\| f \|_{\bHH^s} \le C  \| (f_\ini, \varphi) \|_{H^s_x L^2_v(F_\star) \times \HH^s} \, \quad\text{for } s \in [0, 1] \, .$$
	The pointwise bounds are obtained through the same argument, but applied to any given time $t \in (0, \infty)$. For instance, to obtain the $x$--derivatives control, we have to check that for $s \in [0, 1]$ and $t \in (0, \infty)$
	\begin{gather*}
		\| f(t) \|_{H^1_x L^2_v(F_\star)} \le C_s(t) \| (f_\ini, \varphi) \|_{H^s_x L^2_v(F_\star) \times \HH^s} \, ,
	\end{gather*}
	where we denoted
	$$C_s(t) = C w_{3(1-s)}(t)^{-1} = C_0(t)^{1-s} C_1(t)^s \, .$$
	We know this holds for $s = 0$ and $s=1$, thus, by interpolation, this holds for $s \in [0, 1]$. The same argument provides the $v$--derivatives estimate and the $L^2_{x, v}$--exponential decay.  The following proposition sums up the bounds established by interpolation in this paragraph.

	\begin{lem}
		\label{lem:linear_bound}
		For any given $s \in [0, 1]$, any solution $f$ to \eqref{eq:linearized_source} satisfies
		\begin{equation*}
			\| f \|_{\bXX^s} \le C \left( \| f_\ini \|_{H^s_x L^2_v(F_\star)} + \| \varphi \|_{\HH^s} \right) \, .
		\end{equation*}
	\end{lem}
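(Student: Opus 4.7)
The plan is pure interpolation from the two endpoint cases $s\in\{0,1\}$, which are already in place. At $s=1$ the estimate reduces to combining the $H^1_x L^2_v$--hypocoercivity of Proposition \ref{prop:H1x_linear_hypocoercivity} with the $H^1_{x,v}$ hypocoercivity/hypoellipticity packaged in Proposition \ref{prop:H1_linear_hypoellipticity_hypocoercivity}; at $s=0$ it follows from Proposition \ref{prop:L2_linear_hypocoercivity} for the exponential decay together with the hypoelliptic regularization of Proposition \ref{prop:H1_linear_hypoellipticity_hypocoercivity}, whose polynomial time weights $w_{1-s}$ and $w_{3(1-s)}$ at $s=0$ correspond exactly to the gain of one $v$-- and one $x$--derivative for short times. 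The two endpoint propositions therefore yield the target estimate in the extreme cases without any further computation.

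For the interpolation step I would realize $\HH^s$, $\bHH^s$ and $H^s_x L^2_v(F_\star)$ as geometric interpolation spaces of order $s$ between their $s=0$ and $s=1$ versions. The Sobolev interpolant $H^s_x L^2_v(F_\star)$ is immediate. For $\HH^s$ I follow the recipe sketched in the paragraph preceding the statement: embed diagonally $\varphi\mapsto(\varphi,\varphi,\varphi)$ into a product Hilbert space $[\HH]^0$ built from three copies of $L^2_{t,x,v}(F_\star e^{2\lambda t}\,\d t)$ (the third carrying an $H^1_x$ norm), and introduce the nonnegative self-adjoint operator $A$ that multiplies these coordinates respectively by $1$, $\min\{1,t\}(\nabla_v^*\nabla_v)^{1/2}$ and $\min\{1,t\}^3$. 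Then $\|\varphi\|_{\HH^{1-s}}=\|A^s(\varphi,\varphi,\varphi)\|_{[\HH]^0}$, so McCarthy's theorem \cite[Theorem 1.1]{mccarthy} identifies $\HH^s$ with the geometric interpolant between $\HH^0$ and $\HH^1$; the same trick handles $\bHH^s$. Applying McCarthy's theorem to the linear solution map $\Phi:(f_\ini,\varphi)\mapsto f$, which is bounded between the endpoints, delivers the $\bHH^s$ part of the conclusion.

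The pointwise-in-time part of $\bXX^s$ is handled by the same interpolation argument applied at each fixed $t$: for any $t>0$, the evaluation $\Phi_t:(f_\ini,\varphi)\mapsto f(t)$ is bounded into $H^1_x L^2_v(F_\star)$ and into $L^2_x H^{1-s}_v(F_\star)$ at the two endpoints with operator norms $C^{(s)}(t)$ equal to $w_\sigma(t)^{-1}$ for the relevant $\sigma$. A second application of McCarthy's theorem at each $t$ then gives $C^{(s)}(t)\le C^{(0)}(t)^{1-s}C^{(1)}(t)^s$, which is exactly the weight appearing in $\bXX^s$; taking the supremum over $t$ closes the bound.

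The main technical subtlety is the simultaneous interpolation of three components carrying different time-scaling weights. A naive product-space interpolation does not reproduce the correct $\HH^s$ or $\bXX^s$ norm, but operator interpolation via the diagonal $A$ above does so exactly. Once $A$ is in place, the rest is a mechanical application of McCarthy's theorem at both the integrated level and pointwise in $t$, and no new energy estimate beyond those already established in Section \ref{scn:linearized} is required.
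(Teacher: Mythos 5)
Your proposal is correct and coincides with the paper's own argument: the paper also establishes the endpoint cases $s=0,1$ from the propositions of Section \ref{scn:linearized} and then interpolates via McCarthy's theorem, realizing $\HH^s$ and $\bHH^s$ as geometric interpolants through the diagonal embedding $\varphi\mapsto(\varphi,\varphi,\varphi)$ and the weighted operator $A$, with the pointwise-in-time bounds obtained by running the same interpolation at each fixed $t$. No further comparison is needed.
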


	\subsection{Nonlinear estimates}
	\label{scn:nonlinear_estimate}
	We now show that these norms are adapted to the nonlinear problem.
	\begin{lem}
		\label{lem:nonlinear_bound}
		For any $s \in [0, 1]$ such that
		$$s > s_c := \frac{3}{2} \left( \frac{d}{q} - \frac{1}{3} \right) \, ,$$
		there exists some $C>0$ such that the following bilinear estimate holds:
		$$\| f \nabla_x \psi_g \|_{\HH^{s}} \le C \| f \|_{\bXX^s} \| g \|_{\bXX^s} \, .$$
	\end{lem}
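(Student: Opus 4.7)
The plan is to estimate each of the three summands of $\|f\nabla_x\psi_g\|_{\HH^s}^2$ separately, matching a spatial H\"older inequality and Sobolev embedding against the time-weighted norm $\bXX^s$.

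For the $L^2_{x,v}(F_\star)$ summand I would use that $\nabla_x\psi_g$ depends only on $x$ to write, by H\"older with $\tfrac{1}{p_1}+\tfrac{1}{q}=\tfrac{1}{2}$,
\[
\|f\nabla_x\psi_g\|_{L^2_{x,v}(F_\star)}\;\le\;\|\nabla_x\psi_g\|_{L^q_x}\,\|f\|_{L^{p_1}_xL^2_v(F_\star)}.
\]
The first factor is controlled by $\|g\|_{L^2_{x,v}(F_\star)}$ thanks to Lemma \ref{lem:bounds_psi_weight}; the second is controlled by $\|f\|_{H^{d/q}_x L^2_v(F_\star)}$ via the Sobolev embedding $H^{d/q}_x\hookrightarrow L^{p_1}_x$ (valid since $q>d$ gives $d/q<1$) combined with the weighted equivalence \eqref{eq:weighted_sobolev_space_comparison}. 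The $L^2_x H^{1-s}_v(F_\star)$ summand is analogous since all fractional $v$-derivatives act on $f$. For the $H^1_x L^2_v(F_\star)$ summand I apply the Leibniz rule $\nabla_x(f\nabla_x\psi_g)=(\nabla_x f)\nabla_x\psi_g+f\nabla_x^2\psi_g$; the second term is treated as in the $L^2_{x,v}$ case using $\|\nabla_x^2\psi_g\|_{L^q_x}=\|\nabla\KK(\nabla_x\rho_g)\|_{L^q_x}\lesssim\|\nabla_x\rho_g\|_{L^1\cap L^2}\lesssim\|g\|_{H^1_x L^2_v(F_\star)}$ (through the identity $\nabla_x\rho_g=-\rho_\star\nabla_x^*\Pi g$), while $(\nabla_x f)\nabla_x\psi_g$ is handled by placing $\nabla_x\psi_g$ in $L^\infty_x$ through a Gagliardo--Nirenberg interpolation between $L^q_x$ and $W^{1,q}_x$, which is permitted since $q>d$.

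For each summand I would then interpolate $\|f\|_{H^{d/q}_x L^2_v(F_\star)}\le\|f\|_{H^s_x L^2_v(F_\star)}^{1-\theta}\|f\|_{H^1_x L^2_v(F_\star)}^{\theta}$ with $\theta=(d/q-s)/(1-s)$ when $s\le d/q$ (the case $s\ge d/q$ is immediate), use the pointwise-in-$t$ bound on $\|f\|_{H^s_xL^2_v(F_\star)}$ with weight $w_0^{-1}$ extracted from $\bXX^s$, and combine with the pointwise and integrated-in-$t$ bounds on $\|f\|_{H^1_xL^2_v(F_\star)}$ (weights $w_{3(1-s)}^{-1}$ and $w_{2(1-s)}^{2}$ respectively); an analogous decomposition applies to $g$. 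A H\"older inequality in $t$ then reduces the analysis to the integrability near $t=0$ of a weight of the form $e^{-2\lambda t}\min\{1,t\}^{(1-s)(1-3\theta)}$; this is finite exactly when $\theta\le\tfrac{1}{3}$, which translates to the stated threshold $s>s_c=\tfrac{3}{2}(\tfrac{d}{q}-\tfrac{1}{3})$.

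The main difficulty lies in the bookkeeping: several combinations of $\bXX^s$--bounds are available for each factor (pointwise versus integrated in $t$, and the three spatial/velocity norms), and one must match them so that all resulting time weights are simultaneously integrable. The tightest constraint emerges from the $f\nabla_x^2\psi_g$ contribution to the $H^1_x L^2_v(F_\star)$ summand, where the spatial interpolation exponent $\theta$ must satisfy $\theta\le\tfrac{1}{3}$; this is precisely where the critical regularity $s_c$ is revealed and reflects the balance between the spatial integrability $q$ of $\nabla\KK$ and the hypoelliptic regularization rates encoded in $w_{2(1-s)}$ and $w_{3(1-s)}$.
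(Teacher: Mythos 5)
Your overall strategy is the same as the paper's: Leibniz decomposition of the $H^1_xL^2_v$ component, an $L^\infty_x$ bound on $\nabla_x\psi_g$ obtained by interpolating between $L^q$ and $W^{1,q}$ for the terms where the kernel is not differentiated, a H\"older--Sobolev pairing $\|\nabla_x^2\psi_g\|_{L^q}\,\|f\|_{L^m_x L^2_v}$ with $H^{d/q}_x\subset L^m_x$ for the term $f\nabla_x^2\psi_g$, and then a spatial interpolation of the intermediate norm between $H^s_x$ and $H^1_x$ whose exponent must not exceed $1/3$ so that the time weights $w_0,w_{2(1-s)},w_{3(1-s)}$ match; this is exactly where $s_c$ appears in the paper as well (the paper fixes the exponent at exactly $1/3$ by working in $H^{s+\frac{1-s}{3}}_x=H^{\frac dq+\delta}_x$, which makes the weight matching an identity, whereas you take $\theta=(d/q-s)/(1-s)<1/3$ and absorb a leftover factor).

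One step would fail as written. You treat the $L^2_xH^{1-s}_v(F_\star)$ summand ``analogously'' to the $L^2_{x,v}$ summand, i.e.\ by H\"older in $x$ with $\nabla_x\psi_g\in L^q_x$ and $f\in L^{p_1}_xH^{1-s}_v$. But $\|f\|_{L^{p_1}_xH^{1-s}_v(F_\star)}$ requires, via Sobolev embedding, a mixed norm of type $H^{d/q}_xH^{1-s}_v$, and the space $\bXX^s$ controls no norm with simultaneous positive regularity in $x$ and in $v$ (only $H^1_xL^2_v$ and $L^2_xH^{1-s}_v$). For this summand you must instead place $\nabla_x\psi_g$ in $L^\infty_x$ and pay the Sobolev regularity on $g$, writing $\|f\nabla_x\psi_g\|_{L^2_xH^{1-s}_v}\le\|\nabla_x\psi_g\|_{L^\infty_x}\|f\|_{L^2_xH^{1-s}_v}$ and pairing $\int_0^\infty w_0^2\|f\|^2_{L^2_xH^{1-s}_v}\,\d t$ with $\sup_t w_{1-s}^2\|g\|^2_{H^{s+\frac{1-s}{3}}_xL^2_v}$, as the paper does. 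Two smaller points: the leftover time weight $e^{-2\lambda t}\min\{1,t\}^{(1-s)(1-3\theta)}$ must be \emph{bounded} near $t=0$, not merely integrable (boundedness is what you need to pull the corresponding factor out as a supremum, and it is precisely the condition $\theta\le 1/3$; integrability alone would hold for some $\theta>1/3$); and working with exactly $H^{d/q}_x$ puts you on the borderline of the Sobolev embedding when $p_1=\infty$ (e.g.\ $q=2$, $d=1$), which is why the paper keeps a margin $\delta>0$ --- harmless here since $s>s_c$ strictly leaves room for it.
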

	
	\begin{rem}
		\label{rem:regularization_K_vs_smoothness_ID}
		Note that $s_c < 1 \Leftrightarrow q > d$, and $s = 0$ is allowed as soon as $q > 3d$.
	\end{rem}
	
	\begin{proof}
		Let us denote in this proof $\sigma := 1-s$ and define
		$$\delta = \frac{2}{3} s - \left( \frac{d}{q} - \frac{1}{3} \right)  > 0
		\quad \text{so that} \quad \frac{d}{q} + \delta = s + \frac{\sigma}{3} \, .$$
		We start with the control
		$$\| f \nabla_x \psi_g \|_{H^1_x L^2_v (F_\star)} \lesssim A + B  \, .$$
		where we denoted
		$$A := \| f \nabla_x \psi_g \|_{L^2_x L^2_v (F_\star)} + \| \nabla_x f \nabla_x \psi_g \|_{L^2_x L^2_v (F_\star)}, \quad B := \| f \nabla_x^2 \psi_g \|_{L^2_x L^2_v (F_\star)} \, .$$
		On the one hand, one has from the bound \eqref{eq:upper_bound_grad_psi_weight} on $\nabla_x \psi$
		$$\| \nabla_x \psi_g \|_{L^q} \lesssim \| g \|_{L^2(F_\star)} \quad \text{and} \quad \| \nabla_x \psi_g \|_{W^{1,q}} \lesssim \| g \|_{H^1_xL^2_v(F_\star)} \, ,$$
		therefore, since we have by real interpolation with $\theta = \frac{d}{q} + \delta = s + \frac{\sigma}{3} \in [0, 1]$ (see for instance \cite[Lemma 22.3]{tartar})
		$$\left[ L^q, W^{1,q} \right]_{\theta, 2} = B^{\theta}_{q, 2} \quad \text{and} \quad \left[L^2(F_\star) , H^1_xL^2_v(F_\star) \right]_{\theta, 2} = H^\theta_xL^2_v(F_\star) \, ,$$
		where $B^\theta_{q,2}$ denoted the suitable Besov space, the following interpolated bound holds:
		$$\| \nabla_x \psi_g \|_{ B^{\theta}_{q, 2} } \lesssim \| g \|_{H^\theta_xL^2_v(F_\star)} \, .$$
		Using the chain of embeddings (see \cite[Propositions 2.71 and then 2.39]{BahouriCheminDanchin})
		$$B^{\theta}_{q, 2} = B^{\frac{d}{q} + \delta}_{q, 2} \subset B^{\delta}_{\infty, 2} \subset B^0_{\infty, 1} \subset L^\infty \, ,$$
		these two facts yield the estimate
		$$A \lesssim \| f \|_{H^1_x L^2_v(F_\star)} \| \nabla_x \psi_g \|_{L^\infty_x } \lesssim \| f \|_{H^1_x L^2_v(F_\star)} \| g \|_{H^{ s + \frac{\sigma}{3} }_x L^2_v(F_\star)} \, .$$
		On the other hand, using Hölder's inequality with $\frac{1}{2} = \frac{1}{q} + \frac{1}{m}$ followed by the bound on $\nabla_x \psi$, one has
		$$B = \| f \nabla_x \psi [\nabla_x^* g] \|_{L^2_x L^2_v(F_\star)} \lesssim \| F_\star^{1/2} f \|_{L^m_x L^2_v} \| \nabla_x^* g  \|_{L^2_x L^2_v(F_\star)} \, .$$
		Using next the Sobolev embedding
		$$H^{s + \frac{\sigma}{3} } = H^{\frac dq +\delta }\subset H^{\frac dq} \subset L^m \quad \text{since} \quad \frac{d}{q} = \frac{d}{2} -\frac{d}{m} \, ,$$
		we thus have thanks to the norm comparison \eqref{eq:weighted_sobolev_space_comparison} that
		$$B \lesssim \| f \|_{H^{s + \frac{\sigma}{3}}_x L^2_v(F_\star) } \| g \|_{H^1_x L^2_v(F_\star)} \, .$$
		Putting together the estimates on $A$ and $B$, using that $s + \sigma = 1$, we conclude that
		\begin{align*}
			t^{3 \sigma} \| f \nabla_x \psi_g \|_{H^{s+\sigma}_x L^2_v(F_\star)}^2 \lesssim & \left( t^{2\sigma} \| f \|_{H^{s+\sigma}_x L^2_v(F_\star)}^2 \right) \left( t^\sigma \| g \|_{H^{s + \frac{\sigma}{3}}_x L^2_v(F_\star) }^2 \right) \\
			& + \left( t^\sigma \| f \|_{H^{s + \frac{\sigma}{3}}_x L^2_v(F_\star) }^2 \right) \left( t^{2\sigma} \| g \|_{H^{s+\sigma}_x L^2_v(F_\star)}^2 \right) \, ,
		\end{align*}
		from which we deduce, interpolating $H^{s + \frac{\sigma}{3} }_x$ between $H^s_x$ and $H^{s+\sigma}_x = H^1_x$
		\begin{align*}
			\int_0^\infty w_{3 \sigma}(t)^2 & \| f(t) \nabla_x \psi_g(t) \|_{H^{s+\sigma}_x L^2_v(F_\star)}^2 \d t \\
			\lesssim & \left( \int_0^\infty w_{2\sigma}(t)^2 \| f \|_{H^{s+\sigma}_x L^2_v(F_\star)}^2 \d t \right)
			\sup_{t > 0} \left( w_\sigma(t)^2 \| g \|_{H^{s + \frac{\sigma}{3}}_x L^2_v(F_\star) }^2 \right) \\
			& + \sup_{t > 0} \left( w_\sigma(t)^2 \| f \|_{H^{s + \frac{\sigma}{3}}_x L^2_v(F_\star) }^2 \right) \left( \int_0^\infty w_{2\sigma}(t)^2 \| g \|_{H^{s+\sigma}_x L^2_v(F_\star)}^2 \d t \right) \\
			\lesssim & \| f \|_{\bXX^s}^2 \| g \|_{\bXX^s}^2
		\end{align*}
		Similarly, one shows
		$$t^\sigma \| f \nabla_x \psi_g \|_{L^2_x H^\sigma_v(F_\star)}^2 \lesssim \| f \|_{L^2_x H^\sigma_v(F_\star)}^2 \left( t^\sigma \| g \|_{H^{s + \frac{\sigma}{3}}_x L^2_v(F_\star) }^2 \right)$$
		from which we deduce
		$$\int_0^\infty w_\sigma(t)^2 \| f(t) \nabla_x \psi_g(t) \|_{L^2_x H^\sigma_v(F_\star)}^2 \d t \lesssim \| f \|^2_{\bXX^s} \| g \|^2_{\bXX^s} \, .$$
		This concludes the proof.
	\end{proof}
	
	\subsection{Proof of Theorem~\ref{thm:stability}}
	\label{scn:nonlinear_proof}
	
	Fix some $f_\ini \in H^s_x L^2_v (F_\star)$ and denote $\Phi : \bXX^s \to \bXX^s$ the mapping defined by
	\begin{equation*}
		f := \Phi g \quad \text{where} \quad
		\begin{cases}
			( \partial_t + \Lambda ) f(t) + v \cdot \nabla_x \psi_f(t) = \nabla_v^* ( g \nabla_x \psi_g ), \\
			\\
			\displaystyle f(0, x, v) = f_\ini(x, v) \, .
		\end{cases}
	\end{equation*}
	As a consequence of Lemmas \ref{lem:linear_bound} and \ref{lem:nonlinear_bound}, there exists some $C > 0$ such that
	$$\| \Phi(g_1) - \Phi(g_2) \|_{\bXX^s} \le C\left( \| g_1 \|_{\bXX^s} + \| g_2 \|_{\bXX^s}\right) \| g_2 - g_2 \|_{\bXX^s} \, ,$$
	$$\| \Phi(g) \|_{\bXX^s} \le C \left(\| f_\ini \|_{H^s_x L^2_v(F_\star) } + \| g \|_{\bXX^s}^2\right) \, .$$
	It then follows from a classical Picard argument that for $\| f_\ini \|_{H^s_x L^2_v(F_\star)}$ small enough, the mapping $\Phi$ has a unique fixed point, which proves the theorem.

	\section{Concluding remarks}
	\label{scn:perspectives}
	Let us end this work by stating some perspectives and natural continuations. We first present possible improvements on the regularity assumptions.
	
	\paragraph{Weakly regularizing interaction potential}
	We have assumed the interaction potential to be regularizing enough, namely $\nabla \KK : L^1 \cap L^2 \to L^q$ with $q > d$, so that the initial condition only needs to have (at most) $H^1_x$ regularity. In order to extend Theorem \ref{thm:stability} to the case when we only assume $q \ge 2$ (for instance, Manev interactions, see Example \ref{exam:singular_attractive}), one would need to consider an initial data $H^s_x$ with $s$ big enough. The strategy to derive $H^1_x L^2_v$ estimates from Section \ref{scn:H1xL2v_hypocoercivity} would need to be adapted to $H^k_x L^2_v$ for any integer $k \in \N$, thus requiring suitable adaptation of the hypothesis \eqref{eq:smoothness_V}. The nonlinear estimate would need to be modified, for instance in the non-regularizing case $q=2$, the Sobolev algebra inequality \cite[Corollary 2.86]{BahouriCheminDanchin} yields for $s > \frac{d}{2} - \frac{1}{3}$
	$$t^3 \| f \nabla_x \psi_f \|_{H^{s+1}_x L^2_v(F_\star)} \lesssim t^2 \| f \|_{ H^{s+1}_x L^2_v(F_\star) }^2 \times t \| f \|_{H^{s+\frac{1}{3} }_x L^2_v(F_\star) }^2 \, .$$
	
	\paragraph{Rough initial data}
	Similarly, we have only considered initial data with at least $0$ regularity, in the sense that we excluded $H^s_x$ spaces for $s < 0$. However, when $q > 3 d$, the critical regularity index is $s_c < 0$, which suggests that initial data with regularity $H^s_x$ for $s_c < s < 0$ could be considered. This would require a duality argument in the spirit of \cite{herau_2004_isotropic, herau_tonon_tristani, carrapatoso2022regularization, gervais}.
	
	\bigskip
	Let us now present some natural continuations.
	\bigskip

	\paragraph{Non-perturbative setting}
	For VPFP, one would be interested in showing exponential convergence to equilibrium starting from an initial data $F(0)$ assuming only physical bounds, namely finite entropy and energy. The strategy would then be to use the free energy, together with regularization estimates in order to perform a trapping argument and show that after a (possibly non-explicit) time $t_0 > 0$, the solution is close to the steady state, allowing to conclude using Theorem \ref{thm:stability} with $F_\ini = F(t_0)$. Similar strategies have been considered in \cite{mouhot, tristani, gualdani2017factorization} for spatially homogeneous kinetic equations. Let us also mention works concerning the global convergence to equilibrium in the case of Vlasov-Fokker-Planck equation \cite{bouchut_1995_long, guillin_2022_convergence}.
	
	\paragraph{Phase transitions and instabilities}
	It is known, in the case of the torus \cite{carrillo_2020_long}, that negative Fourier modes of the interaction potential may lead to phase transitions, and more precisely non-uniqueness of steady states. In particular, in the case of the synchrotron model \eqref{eq:synchrotron}, experimental and numerical evidence suggest that instabilities may arise, and an asymptotically periodic behavior seem to appear (see \cite{roussel2014spatio}). In the linearized setting, it has been shown that when the size of $k_S$ is large, oscillating modes appear \cite{cai2011linear}. This is to be to be opposed to the symmetric positive setting (such as Poisson interactions) for which global convergence to equilibrium is known to hold.

	\section*{Acknowledgements}	
	
	The authors acknowledge support from the LabEx CEMPI (ANR-11-LABX-0007) and CPER WaveTech@HdF.
	
	\bibliographystyle{plain}
	\bibliography{bibli}	
\end{document}